\title{Quantum $K$-theory of projective spaces and confluence of $q$-difference equations}
\author[1]{Alexis Roquefeuil}
\affil[1]{\small{Kavli IPMU (WPI), UTIAS, The University of Tokyo, Kashiwa, Chiba 277-8583, Japan}}
\date{\today}
\begin{document}

\maketitle


\begin{abstract}
    Givental's $K$-theoretical $J$-function can be used to reconstruct genus zero $K$-theoretical Gromov--Witten invariants.
    We view this function as a fundamental solution of a $q$-difference system.
    In the case of projective spaces, we show that we can use the confluence of $q$-difference systems to obtain the cohomological $J$-function from its $K$-theoretic analogue.
    This provides another point of view to one of the statements of Givental--Tonita's quantum Hirzebruch--Riemann--Roch theorem.
    Furthermore, we compute connection numbers in the equivariant setting.
\end{abstract}

\noindent \textbf{Keywords:} Gromov--Witten invariants $\cdot$ $K$-theoretical Gromov--Witten invariants $\cdot$ Quantum $\mathcal{D}$-module $\cdot$ $q$-difference equations $\cdot$ Givental's formalism

\noindent \textbf{Mathematics Subject Classification:} 14N35 $\cdot$ 39A45 $\cdot$ 53D45

\tableofcontents

\section{Introduction}

\subsection{Some context}

\textit{Gromov--Witten invariants} are rational numbers that, in some situations, count the number of curves satisfying some incidence conditions inside a projective algebraic variety.
Let $X$ be a smooth projective variety, and fix $g,n \in \mathbb{Z}_{\geq 0}, d \in H_2(X;\mathbb{Z})$.
Denote by $\overline{\mathcal{M}}_{g,n}(X,d)$ the moduli space of stable maps \cite{Konts_Enumeration}, and let $\left[ \overline{\mathcal{M}}_{g,n}(X,d) \right]^\text{vir}$ be the virtual fundamental class constructed in \cite{BF_intnormalcone}, Definition 5.2.
We recall that this moduli space comes with $n$ evaluation maps $\textnormal{ev}_i : \overline{\mathcal{M}}_{g,n}(X,d) \to X$ and with $n$ (orbifold) vector bundles $\mathcal{L}_i$ called the cotangent line bundles.
We also introduce the cohomological classes $\psi_i := c_1 \left( \mathcal{L}_i \right) \in H^2\left(\overline{\mathcal{M}}_{g,n}(X,d) ; \mathbb{Q} \right)$.

\begin{defin*}[\cite{Konts_Enumeration,BF_intnormalcone}]
    Let $g,n \in \mathbb{Z}_{\geq 0}$, $d \in H_2(X;\mathbb{Z})$. Let $k_1, \dots, k_n \in \mathbb{Z}_{\geq 0}$ be some integers, and let $\alpha_1, \dots \alpha_n \in H^*(X; \mathbb{Q})$. 
    The associated \textit{Gromov--Witten invariant} is defined by the intersection product
    \[
        \langle \psi_1^{k_1} \alpha_1 , \dots , \psi_n^{k_n} \alpha_n \rangle^\textnormal{coh}_{g,n,d}
        = \int_{\left[ \overline{\mathcal{M}}_{g,n}(X,d) \right]^\text{vir}} \bigcup_i \left( \psi_i^{k_i} \cup \text{ev$_i^\star$}(\alpha_i)\right) \in \mathbb{Q}
        ,
    \]
    where $\int_{\left[ \overline{\mathcal{M}}_{g,n}(X,d) \right]^\text{vir}}$ denotes the cap product in cohomology with the virtual fundamental class.
\end{defin*}

More recently, in 2004, Y.-P. Lee defined new invariants by replacing the cohomological constructions in the above definition by their $K$-theoretical analogues.
Denote by $\mathcal{O} ^ {\text{vir}} _ {g,n,d}$ the virtual structure sheaf, constructed in \cite{Lee_qk}, Subsection 2.3.

\begin{defin*}[\cite{Lee_qk}]
    Let $g,n \in \mathbb{Z}_{\geq 0}$, $d \in H_2(X;\mathbb{Z})$. Let $k_1, \dots, k_n \in \mathbb{Z}_{\geq 0}$ be some integers, and let $\phi_1, \dots \phi_n \in K(X)$.
    The associated \textit{$K$-theoretical Gromov--Witten invariant} is given by the Euler characteristic
    \[
        \left\langle
            \mathcal{L}_1^{k_1} \phi_1, \cdots, \mathcal{L}_n^{k_n} \phi_n
        \right\rangle_{g,n,\beta}^{K\textnormal{th}}
        =
        \chi \left(
            \overline{\mathcal{M}}_{g,n}\left(X,d \right);
            \mathcal{O} ^ {\text{vir}} _ {g,n,d}
            \bigotimes_{i=1}^n \mathcal{L}_i^{k_i} \text{ev}_i^*(\phi_i)
        \right)
        \in \mathbb{Z}
        .
    \]
\end{defin*}

A natural question to ask upon reading these two definitions is to understand how these two invariants are related.
An algebraic geometer would rightfully expect them to be related by a Riemann--Roch theorem.
Due to the highly sophisticated geometry of the moduli spaces of stable maps, such formula is not easy to obtain.
In 2014, A. Givental and V. Tonita \cite{Giv_Ton_qk_HRR} found a general result saying that genus zero $K$-theoretical Gromov--Witten invariants can be expressed with genus zero cohomological Gromov--Witten invariants (this result has been extended to all genera in \cite{Giv_PermEquiv_IX}).
However, this formula is very technical and therefore has not seen many applications.
One of its known consequences has been that a key power series expressed with $K$-theoretical Gromov--Witten invariants, called Givental's $K$-theoretical $J$-function, satisfies a system $q$-difference equations (\cite{Giv_Ton_qk_HRR}, Section 9, Theorem, see also \cite{Iri_Mil_Ton_qk}, Proposition 2.12), like it had been verified on some examples, e.g. in \cite{Giv_Lee_qk}, Theorem 2.

\begin{remark*}
    Another approach to obtain a comparison between cohomological and $K$-theoretical Gromov--Witten invariants using derived algebratic geometry has been initiated by A. A. Khan, see \cite{khan_derived_rr}.
\end{remark*}

\subsection{Goal of the article}

The aim of this paper is to propose another point of view to compare $K$-theoretical Gromov--Witten invariants with their cohomological analogues, using the theory of $q$-difference equations.

We will focus on the $q$-difference equations satisfied by Givental's small $K$-theoretical $J$-function of the projective space.
In general, these functional equations satisfy a property called \textit{confluence}, according to which we can take some limit $q \to 1$ of the $q$-difference to obtain a differential equation.
A quick illustration of the confluence of $q$-difference equations is this identity, in which $k \in \mathbb{Z}$,
\[
    \lim_{q \to 1} \frac{\qdeop{Q} - \textnormal{Id}}{q-1} \cdot Q^k
    =
    \lim_{q \to 1} \frac{q^k-1}{q-1} \cdot Q^k
    =
    k Q^k
    =
    Q \partial_Q \cdot Q^k
    .
\]
Therefore, we will say that the $q$-difference operator $\frac{\qdeop{Q} - \textnormal{Id}}{q-1}$ converges formally to the differential operator $Q \partial_Q$.
Our goal is to obtain similar limits for the following data:

\begin{defin*}[\cite{Givental_EquivariantGW},\cite{Giv_PermEquiv_II}]
        Consider $X = \mathbb{P}^N$ with its usual toric action of the torus $T^{N+1} = \left(\mathbb{C}^*\right)^{N+1}$. Let $P=\mathcal{O}_\textnormal{eq}(1) \in K_{T^{N+1}}\left( \mathbb{P}^N \right)$ be the anti-tautological equivariant bundle, and let $\lambda_0, \dots, \lambda_N$ (resp. $\Lambda_0, \dots, \Lambda_N$) be the equivariant parameters in cohomology (resp. $K$-theory).
    \begin{enumerate}[label=(\roman*)]
        \item Let $H = c_1 \left( \mathcal{O}_\textnormal{eq}(1) \right) \in H^2_{T^{N+1}} \left( \mathbb{P}^N; \mathbb{Q} \right)$ be the equivariant hyperplane class.
        Givental's small equivariant cohomological $J$-function of $\mathbb{P}^N$ is given by the expression
        \begin{align*}
            J^{\textnormal{coh},\textnormal{eq}}(z,Q) 
            &=
            Q^{\frac{H}{z}} \sum_{d \geq 0} \frac{Q^d}{\prod_{r=1}^d \left( H - \lambda_0 + rz\right) \cdots \left( H - \lambda_N + rz\right)}
            \\
            &\in
            H_{T^{N+1}}^*\left( \mathbb{P}^N \right) \otimes \mathbb{C}[z,z^{-1}][\![Q]\!],
        \end{align*}
        where
        \[
            Q^{\frac{H}{z}}
            =
            \sum_{k=0}^N \frac{1}{k!} \left( \frac{H}{z} \log(Q) \right)^k
        \]

        \item Givental's small equivariant $K$-theoretical $J$-function is the function
        \begin{align*}
            J^{K\textnormal{th},\textnormal{eq}}(q,Q) 
            &=
            P^{-\ell_q(Q)}
            \sum_{d \geq 0} \frac{Q^d}{\left(q\Lambda_0 P^{-1}, \dots,q\Lambda_N P^{-1} ;q \right)_d}
            \\
            &\in
            K_{T^{N+1}} \left( \mathbb{P}^N \right) \otimes \mathbb{C}[q,q^{-1}][\![Q]\!],
        \end{align*}
        where
        \[
            \left(q\Lambda_0 P^{-1}, \dots,q\Lambda_N P^{-1} ;q \right)_d
            =
            \prod_{i=0}^N \prod_{r=1}^{d}
            (1-q^r \Lambda_i P^{-1}),
        \]
        and $P^{-\ell_q(Q)}$ is some $K$-theoretical function corresponding to the function $Q^{\frac{H}{z}}$, that we will introduce in Definition \ref{qkqde:def_J_fn_eq}.
    \end{enumerate}
\end{defin*}

\begin{prop*}[\cite{Giv_PermEquiv_V,CK_book}]
    \begin{enumerate}[label=(\roman*)]
        \item The cohomological $J$-function $J^{\textnormal{coh},\textnormal{eq}}$ is a solution of the differential equation 
        \begin{equation*}
            (\ref{qkqde:eqn_pde_jh_eq})
            \, : \,
            \left[
                (-\lambda_0 + z Q \partial_Q)
                \cdots
                (- \lambda_N + z Q \partial_Q) - Q
            \right]
            J^{\textnormal{coh},\textnormal{eq}}(z,Q) = 0.
        \end{equation*}

        \item The $K$-theoretical $J$-function $J^{K\textnormal{th},\textnormal{eq}}$ is a solution of the $q$-difference equation
        \begin{equation*}
            (\ref{qkqde:eqn_JK_eq})
            \, : \,
            \left[
                \left(
                    1 - \Lambda_0 \qdeop{Q}
                \right) 
                \cdots
                \left( 
                    1 - \Lambda_N \qdeop{Q} 
                \right) - Q
            \right]
            J^{K\textnormal{th}}(q,Q) 
            = 0.
        \end{equation*}
    \end{enumerate}
\end{prop*}

    Applying the confluence of the $q$-difference equations to this data, we want to first compare the $q$-difference equation satisfied in $K$-theory with the differential equation satisfied in cohomology, then compare the two $J$-functions as solutions of their respective functional equations.
    We would like to expect that the following informal statements hold:
    \begin{enumerate}[label=(\roman*)]
        \item The confluence of the $q$-difference equation $(\ref{qkqde:eqn_JK_eq})$ defines a differential equation $\lim_{q \to 1} (\ref{qkqde:eqn_JK_eq})$ which is the same as the differential equation (\ref{qkqde:eqn_pde_jh_eq}) satisfied by the cohomological $J$-function.

        \item As a solution of the $q$-difference equation $(\ref{qkqde:eqn_JK_eq})$, Givental's $K$-theoretical $J$-function $J^{K\textnormal{th},\textnormal{eq}}$ satisfies
        \[
            \lim_{q \to 1} J^{K\textnormal{th},\textnormal{eq}}
            =
            J^{\textnormal{coh,eq}}.
        \]
    \end{enumerate}

To give a rigorous meaning to these informal identifications, we state the following theorem, which is the first goal of this article:

\begin{thm*}[Theorem \ref{qkqde:thm_confluence_jk_eq}]
    Consider the algebraic torus $T^{N+1} := \left(\mathbb{C}^*\right)^{N+1}$ acting on $X = \mathbb{P}^N$.
    Recall that Equation $(\ref{qkqde:eqn_JK_eq})$ (resp. $(\ref{qkqde:eqn_pde_jh_eq})$) refers to the $q$-difference (resp. differential) equation satisfied by Givental's small equivariant $K$-theoretical (resp. cohomological) $J$-function $J^{K\textnormal{th},\textnormal{eq}}$ (resp. $J^{\textnormal{coh, eq}}$).
    Let $q \in \mathbb{C}, 0<|q|<1$ and $z \in \mathbb{C}^*$.
    Assume that the relation $\Lambda_i = q^\frac{-\lambda_i}{z} \in \mathbb{C}$ holds for all $i \in \{0, \dots, N\}$, and that for $i \neq j$, $\lambda_i-\lambda_j \notin \mathbb{Z}$.
    The following statements hold:
    \begin{enumerate}[label=(\roman*)]
        \item Consider the application $\varphi_{q,z}$ defined by
        \[
            \functiondesc{\varphi_{q,z}}{\mathbb{C}}{\mathbb{C}}{Q}{\left( \frac{z}{1-q} \right)^{N+1} Q}
        \]
        Then, the pullback by $\varphi_{q,z}$ of the $q$-difference equation \eqref{qkqde:eqn_JK_eq} is a confluent $q$-difference equation.
        Moreover, its formal limit when $q \to 1$ is the differential equation \eqref{qkqde:eqn_pde_jh_eq}.
        
        \item Consider the isomorphism of rings
        $
            \gamma_\textnormal{eq} : 
            K_{T^{N+1}} \left( \mathbb{P}^N \right) \otimes \mathbb{C} 
            \to 
            H_{T^{N+1}}^*\left( \mathbb{P}^N, \mathbb{C} \right)
        $
        defined by, for all $i \in \{0, \dots, N\}$
        \[
            \gamma_\textnormal{eq}
            \left(
                \prod_{j \neq i} \frac{1 - \Lambda_i P^{-1}}{1 - \Lambda_i \Lambda_j^{-1}}
            \right)
            =
            \prod_{j \neq i} \frac{H-\lambda_i}{\lambda_j-\lambda_i}
        \]
        Let $
            \mathbb{E}_q
        $
        be the complex torus 
        $
            \mathbb{C}^* / q^\mathbb{Z}
        $
        and let
        $
            \mathcal{M} \left( \mathbb{E}_q \right)
        $
        be the space of meromorphic functions on said complex torus.
        Then, there exists an explicit change of fundamental solution
        $
            P_{q,z} \in \textnormal{GL}_{N+1} \left(
                \mathcal{M} \left( \mathbb{E}_q \right)
            \right)
        $
        such that the fundamental solution $J^{K\textnormal{th},\textnormal{eq}}$ is related to the cohomological $J$-function $J^{\textnormal{coh,eq}}$ by
        \[
            \gamma_\textnormal{eq} \left(
                \lim_{t \to 0}
                P_{q^t,z} \cdot \left(
                    \varphi^*_{q^t,z}
                    J^{K\textnormal{th},\textnormal{eq}}\left(q^t,Q\right)
                \right)
            \right)
            =
            J^{\textnormal{coh,eq}}(z,Q).
        \]
    \end{enumerate}
\end{thm*}

Once this comparison result is established, one could be interested in attempting to compute the (local) monodromy data of this $q$-difference equation and to compare it with the cohomological case (see e.g. \cite{Cotti_Dubrovin_Guzzetti_Helix}).
In the $q$-difference case, the monodromy data is a connection matrix, relating the solution at $Q=0$ given by the $J$-function with a fundamental solution at $Q=\infty$, which we will construct in Proposition \ref{stokes:prop_analytic_fund_sol}.
The second main goal of this article to compute this connection matrix in the equivariant setting, as below.
\begin{thm*}[Theorem \ref{stokes:thm_connection_numbers_for_qk}]
    Let $w=1/Q$ and denote by $g_k$ the fundamental solution at $Q=\infty$ constructed in Proposition \ref{stokes:prop_analytic_fund_sol}.
    Then, the fundamental solutions at $0$ and $\infty$ are related by the identity
    \[
        g_k(w)
        =
        \sum_{j=0
        }^N
        R_{k,j}^{\left[ \lambda; q^{N+1} \right]}(q,w)
        J^{K\textnormal{th, eq}}_{|P = \Lambda_j}\left(q,\frac{1}{w}\right)
    \]
    Where the coefficients $R_{k,j}^{\left[ \lambda; q^{N+1} \right]}$ are some explicit $q^{N+1}$-constant functions.
\end{thm*}
In order to compare with quantum cohomology, one could hope that the limit when $q \to 1$ of this connection matrix would be related to the connection matrix in the cohomology case, c.f. \cite{Cotti_Dubrovin_Guzzetti_Helix}, Theorem 6.7.

\begin{remark*}
    After the appearance of this article as a preprint on the arXiv, similar confluence questions (solutions, connection numbers) were investigated by Y. Wen for quintic threefold in \cite{Wen:qkqde_quinticthreefold}; confluence of the $J$-function has also been proved for any smooth projective variety whose anti-canonical bundle is nef in \cite{Milanov_R:QK_confluence_weak_Fano_and_q_oscillatory}.
\end{remark*}

\subsection{Structure of the article}


The Section \ref{section:QDE} will be a survey on the theory of $q$-difference equations, which the reader might not be familiar with.
The aim of the first two subsections is to introduce the definitions required to understand the statement of the main theorem, as well as the special functions that will be useful to us in quantum $K$-theory.
Then, in a last subsection, we will explain the confluence properties of $q$-difference equations in the regular singular case.

In the Section \ref{section:QK_QDE}, the reader should now have the necessary background to understand the statement of the Main Theorem.
In the first subsection, we will recall the definitions of Givental's equivariant $J$-functions, whose expressions are obtained by using virtual localisation theorems. Then, we will give their functional equations.
In the second subsection, we state the Main Theorem and give its proof.
Our proof is split in two parts: first we check the confluence of the $q$-difference equation, then we check the confluence of the $K$-theoretical $J$-function as a solution of the confluent $q$-difference equation.
In the third subsection, we will explain what happens when one tries to adapt the main theorem for non equivariant $J$-functions.

In the Section \ref{section:qmonodromy}, we will compute the $q$-monodromy of our $q$-difference equation in the equivariant case.
In the first subsection, we construct another fundamental solution, this time at $Q=\infty$, at which the $q$-difference equation is irregular singular.
In the second subsection, we prove a base change formula from the $J$-function to this new fundamental solution, obtaining connection numbers in the equivariant case.
Unfortunately, we are not able to prove a non equivalent analogue of these connection numbers, but we will be able to conjecture some formula.

\subsection{Acknowledgements}

Part of the results presented in this article were obtained while the author was a Ph.D student. They would like to acknowledge his advisor, \'Etienne Mann for his advice, guidance, suggestions and friendliness.
The remaining results were obtained while the author was a JSPS International Research Fellow (Standard program at Kavli IPMU, University of Tokyo).
 
This work is supported by Universit\'{e} d'Angers (imputation budg\'{e}taire A900210), France; and the World Premier International Research Center Initiative (WPI Initiative), Ministry of Education, Culture, Sports, Science and Technology, Japan.
The author was partially supported by Agence Nationale de la Recherche's projects ANR-13-IS01-0001 (SISYPH) and ANR-17-CE40-0014 (CatAG); and by Japanese Society for the Promotion of Science's Kakenhi JP19F19802.
\section{$q$-difference equations}\label{section:QDE}

This section is structured in three subsections.
In the first subsection, we give some introductory definitions regarding $q$-difference equations.
The second section is dedicated to the resolution of regular singular $q$-difference equations.
The last section deals with confluence of regular singular $q$-difference equations.

\subsection{General definitions}

In this subsection we recall general notions of the theory of $q$-difference equations from the analytical point of view.

\begin{defin}\label{qde:def_q_difference_linear_sys}
Let $\mathcal{M}(\mathbb{C})$ be the field of meromorphic functions on $\mathbb{C}$.
Fix $q\in \mathbb{C}, |q|<1$ and $n \in \mathbb{Z}_{>0}$. Let $q^{Q \partial_Q}$ be the $q$-difference operator acting on functions $f: \mathbb{C} \to \mathbb{C}$ by $\left(q^{Q \partial_Q} f\right)(Q) = f(q Q)$. A \textit{linear $q$-difference system} is a functional equation
\begin{equation*}
    q^{Q \partial_Q} X_q(Q) = A_q(Q) X_q(Q),
\end{equation*}
where $X_q$ is a column vector of $n$ functions of input $Q$, and $A \in \text{M}_n (\mathcal{M}(\mathbb{C}))$.
The \textit{rank} of this $q$-difference system is defined to be the rank of the matrix $A_q$.
\end{defin}

From now on we will work locally at $Q=0$.
More precisely, we will look for solutions in the space $\mathbb{C}\left\{Q,Q^{-1}\right\}$ of Laurent series that are convergent on a punctured disk centered at $Q=0$.
The definitions and the results below would also hold for $Q=\infty$ after replacing $Q$ with $Q^{-1}$.

\begin{defin}
    Let $(\ddagger_q) : q^{Q \partial_Q} X_q(Q)=A_q(Q) X_q(Q)$ be a $q$-difference system, with $A_q \in \textnormal{M}_n \left( \mathcal{M}(\mathbb{C}) \right)$.
    We define the \textit{solution space} of this $q$-difference equation by
    \[
        \textnormal{Sol}\left( \ddagger_q \right)
        =
        \left\{
            X_q \in \left( \mathbb{C}\left\{Q,Q^{-1}\right\} \right)^n
            \, \middle| \,
            q^{Q \partial_Q} X_q(Q)=A_q(Q) X_q(Q)
        \right\}.
    \]
\end{defin}

\begin{example}[$q$-constants]
    Consider the $q$-difference equation 
    \[
    \qdeop{Q} f_q(Q) = f_q(Q).
    \]
    Constant functions are obvious solutions to this $q$-difference equation.
    Denote by $q^\mathbb{Z}$ the multiplicative group $q^\mathbb{Z} := \{ q^k \, | \, k \in \mathbb{Z} \}$ and choose $\tau \in \mathbb{H} \subset \mathbb{C}$ such that $q = e^{2i \pi \tau}$.
    The meromorphic solutions of this $q$-difference equation can be identified with meromorphic functions on the torus $\mathbb{C}^* / q^\mathbb{Z}$, where
    the action is given by the multiplication $q^k \cdot z = q^k z$ and the complex (torus) structure comes from the exponential $\left( z \mapsto \exp(2i \pi z) \right)$, as in the diagram below.
    \begin{center}
        \begin{tikzcd}
            \mathbb{C}
                \arrow[r,"\exp"]
                \arrow[d]
            &
            \mathbb{C}^*
                \arrow[d]
            \\
            \frac{\mathbb{C}}{\mathbb{Z} + \tau \mathbb{Z}}
                \arrow[r,"\widetilde{\exp}","\sim"']
            &
            \frac{\mathbb{C}^*}{q^\mathbb{Z}}
        \end{tikzcd}
    \end{center}
    Solutions to this $q$-difference equation will be called \textit{$q$-constants}.
\end{example}

\begin{notation}
    We denote by $\mathbb{E}_q$ the complex torus $\mathbb{E}_q := \mathbb{C}^* / q^\mathbb{Z}$.
\end{notation}

The space $\mathcal{M} \left( \mathbb{E}_q \right)$ of meromorphic functions on the complex torus $\mathbb{C}^* / q^\mathbb{Z}$ plays a role for $q$-difference equations similar to the space of constant functions $\mathbb{C}$ for differential equations.
Because the $q$-difference operator $\qdeop{Q}$ is an automorphism of complex functions, we have the following property:

\begin{prop}[\cite{Har_Sau_Sin_book}, p.116]
    The set $\textnormal{Sol}\left( \ddagger_q \right)$ has a structure of $\mathcal{M} \left( \mathbb{E}_q \right)$-vector space.
\end{prop}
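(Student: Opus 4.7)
The plan is to realize $\mathcal{M}(\mathbb{E}_q)$ as the field of $q$-invariant meromorphic functions on $\mathbb{C}^*$ and to show that such functions act on solutions by pointwise multiplication while preserving the $q$-difference relation. Concretely I would proceed as follows.

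First, I would make precise the identification already flagged in the $q$-constants example preceding the statement: via the quotient map $\mathbb{C}^* \to \mathbb{C}^*/q^\mathbb{Z} = \mathbb{E}_q$, every $c \in \mathcal{M}(\mathbb{E}_q)$ lifts uniquely to a meromorphic function on $\mathbb{C}^*$ (which I again denote $c$) satisfying $c(qQ) = c(Q)$, and conversely every such $q$-invariant meromorphic function descends to $\mathbb{E}_q$. So elements of $\mathcal{M}(\mathbb{E}_q)$ are exactly the meromorphic $q$-constants.

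Next, given $c \in \mathcal{M}(\mathbb{E}_q)$ and $X_q \in \textnormal{Sol}(\ddagger_q)$, I would define the action by componentwise multiplication $(c \cdot X_q)(Q) := c(Q) X_q(Q)$ and check that it lands back in $\textnormal{Sol}(\ddagger_q)$. Using that $q^{Q \partial_Q}$ acts by the pullback $f(Q) \mapsto f(qQ)$ and is therefore multiplicative, together with $q$-invariance of $c$, one computes
\[
    q^{Q \partial_Q}\left( c \cdot X_q \right)(Q)
    = c(qQ) X_q(qQ)
    = c(Q) \, A_q(Q) X_q(Q)
    = A_q(Q) \, (c \cdot X_q)(Q),
\]
so the product still solves $\ddagger_q$. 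The standard module axioms (bilinearity, associativity with respect to the ring structure of $\mathcal{M}(\mathbb{E}_q)$, neutrality of $1$) are then inherited from ordinary multiplication of functions, and $\textnormal{Sol}(\ddagger_q)$ already carries an abelian group structure coming from linearity of the equation.

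There is no genuinely hard step here; the statement is essentially the $q$-difference analogue of the fact that the solution space of a linear ODE is a $\mathbb{C}$-vector space, with $\mathbb{C}$ replaced by the field of $q$-constants. The only point that deserves attention is the ambient function space in which the action is taken: the definition puts solutions in $\mathbb{C}\{Q,Q^{-1}\}$, but multiplying by a nontrivial element of $\mathcal{M}(\mathbb{E}_q)$ (say a theta quotient) generally leaves this class. The proof is therefore cleanest if one views $\textnormal{Sol}(\ddagger_q)$ as sitting inside the appropriate space of germs of meromorphic functions near $Q=0$ invariant under the $q$-shift dynamics; the identification of Step 1 and the computation above then make the $\mathcal{M}(\mathbb{E}_q)$-action internal by construction.
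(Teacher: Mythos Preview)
Your argument is correct and matches the approach the paper signals: the paper does not actually give a proof but only cites \cite{Har_Sau_Sin_book} and prefaces the statement with the remark that $q^{Q\partial_Q}$ is an automorphism of complex functions, which is precisely the multiplicativity you exploit in your displayed computation. Your closing caveat about the ambient space is also well taken; the definition of $\textnormal{Sol}(\ddagger_q)$ in terms of $\mathbb{C}\{Q,Q^{-1}\}$ is indeed not literally closed under multiplication by arbitrary elements of $\mathcal{M}(\mathbb{E}_q)$, and the cleanest fix is the one you describe, working in germs of meromorphic functions near $0$.
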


\begin{defin}
    Let $q^{Q \partial_Q} X_q(Q)=A_q(Q) X_q(Q)$ be a $q$-difference system of rank $n \in \mathbb{Z}_{>0}$.
    A \textit{fundamental solution} of this system is an invertible matrix
    $\mathcal{X}_q \in \text{GL}_n\left(\mathbb{C}\left\{Q,Q^{-1}\right\}\right)$
    such that $q^{Q \partial_Q} \mathcal{X}_q(Q) = A_q(Q) \mathcal{X}_q(Q)$.
\end{defin}

\begin{defin}
    Let $q^{Q \partial_Q} X_q(Q) = A_q(Q) X_q(Q)$ be a $q$-difference system.
    Consider a matrix $F_q \in \text{GL}_n\left( \mathbb{C}\left\{Q,Q^{-1}\right\} \right)$.
    The \textit{gauge transform} of the matrix $A_q$ by the \textit{gauge transformation} $F_q$ is defined to be the matrix
    \[
        F_q \cdot [A_q]
        :=
        \left( q^{Q \partial_Q} F_q \right) A_q F_q^{-1}.
    \]
    A second $q$-difference system $q^{Q \partial_Q} X_q(Q) = B_q(Q) X_q(Q)$ is said to be \textit{equivalent by gauge transform} to the first one if there exists a matrix $F_q \in \text{GL}_n\left( \mathbb{C}\left\{Q,Q^{-1}\right\} \right)$ such that
    \[
        B_q = F_q \cdot [A_q].
    \]
\end{defin}

\begin{defin}
    Let $(\ddagger_q) : q^{Q \partial_Q} X_q(Q)=A_q(Q) X_q(Q)$ be a $q$-difference system and let $\varphi_q : \mathbb{C} \to \mathbb{C}$ be an isomorphism.
    The \textit{$q$-pullback} $(\varphi_q^* \ddagger_q)$ of $(\ddagger_q)$ by $\varphi_q$ is the $q$-difference system given by
    \[
        \varphi_q^*(\ddagger_q) \, : \, \qdeop{Q} X_q(Q) = A_q(\varphi_q^{-1}(Q)) X_q(Q).
    \]
\end{defin}

\begin{defin}
    A system $q^{Q \partial_Q} X_q(Q) = A_q(Q) X_q(Q)$ is \textit{regular} if $A_q(0)$ is diagonal and if its eigenvalues are of the form $q^k$ for $k \in \mathbb{Z}_{\geq 0}$.
\end{defin}

\begin{defin}\label{qde:def_regular_singular_qde}
    A system $q^{Q \partial_Q} X_q(Q) = A_q(Q) X_q(Q)$ is said to be \textit{regular singular at $Q=0$} if there exists a $q$-gauge transform $P_q \in \text{GL}_n\left( \mathbb{C}\left\{Q,Q^{-1}\right\} \right)$ after which the matrix $A_q$ evaluated at $Q=0$ is well-defined and invertible, i.e. $\left( P_q \cdot[A_q] \right) (0) \in \text{GL}_n(\mathbb{C})$.
\end{defin}

\subsection{Fundamental solution for regular singular $q$-difference systems}

We will now mention the results regarding the fundamental solution of regular singular $q$-difference equations.
Like in the differential case, if a $q$-difference equation is regular singular at $0$, then its solutions exhibit polynomial growth at $0$.
This statement is made explicit in Proposition \ref{qde:prop_sol_have_polynomial_growth}.
The practical use of this part is to introduce various special functions related to the theory of $q$-difference equations, which will also appear in the next section dealing with quantum $K$-theory: Definitions \ref{qde:def_qpoch} and \ref{qde:def_q_logarithm}.

\begin{defin}\label{qde:def_qpoch}
    The \textit{$q$-Pochhammer symbol} is the complex function defined for $d \in \mathbb{Z}_{\geq 0}$ by
    \begin{align*}
        (Q;q)_0 &:= 1, \\
        (Q;q)_d &:= \prod_{r=0}^{d-1} (1-q^r Q), \\
        (Q;q)_\infty &:= \prod_{r \geq 0} (1-q^r Q).
    \end{align*}
\end{defin}

\begin{defin}[\cite{Mum_tata_1}]\label{qde:def_theta}
    \textit{Jacobi's theta function} $\theta_q$ is the complex function defined by the convergent Laurent series
    \[
        \theta_q(Q) := \sum_{d \in \mathbb{Z}} q^\frac{d(d-1)}{2} Q^d
        \in
        \mathbb{C}\{\!\{Q,Q^{-1}\}\!\}.
    \]
\end{defin}

\begin{prop}\label{qde:prop_theta_qde}
    Jacobi's theta function $\theta_q$ is a solution of the $q$-difference equation
    \[
        \qdeop{Q} \theta_q(Q) = \frac{1}{Q} \theta_q(Q).
    \]
\end{prop}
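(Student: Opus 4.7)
The plan is a direct symbolic verification: substitute the series definition of $\theta_q$ into both sides of the claimed equality and match Laurent coefficients term-by-term. Since the proposition is a classical identity about a well-behaved bilateral series, I do not expect a substantial obstacle, only a reindexing computation.

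First I would compute the left-hand side by acting on the series term-by-term. Because $\qdeop{Q}$ sends each monomial $Q^d$ to $q^d Q^d$, applying it to $\theta_q(Q) = \sum_{d \in \mathbb{Z}} q^{d(d-1)/2} Q^d$ produces a series whose coefficient of $Q^d$ is $q^{d(d-1)/2 + d} = q^{d(d+1)/2}$. Second, I would rewrite the right-hand side $Q^{-1} \theta_q(Q)$ by shifting the summation index $d \mapsto d+1$, which yields a series whose coefficient of $Q^d$ is also $q^{d(d+1)/2}$. The two Laurent expansions then agree coefficient-by-coefficient, which proves the identity. Algebraically, everything boils down to the elementary recursion $(d+1)d/2 = d(d-1)/2 + d$ satisfied by the triangular-number quadratic form in the exponent.

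The only point deserving a brief justification is the legitimacy of the termwise manipulation: $\theta_q$ is a bilateral series, so I would note that the Gaussian-type growth of $q^{d(d-1)/2}$ (with $|q|<1$) ensures absolute convergence of $\theta_q(Q)$ on all of $\mathbb{C}^*$, and that this convergence is preserved under the dilation $Q \mapsto qQ$. This makes the evaluation of $\qdeop{Q}\theta_q$ term-by-term rigorous, and no further analytic input is needed.
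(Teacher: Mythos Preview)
Your argument is correct: the termwise application of $q^{Q\partial_Q}$ and the reindexing $d\mapsto d+1$ reduce the identity to the triangular-number recursion $d(d+1)/2 = d(d-1)/2 + d$, and absolute convergence on $\mathbb{C}^*$ justifies the manipulation. The paper itself states this proposition without proof, treating it as a standard fact about Jacobi's theta function; your computation is exactly the expected elementary verification.
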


\begin{defin}[\cite{Sauloy_qde_regsing}]\label{qde:def_q_logarithm}
    The \textit{$q$-logarithm} is the function $\ell_q \in \mathcal{M}(\mathbb{C}^*)$ defined by
    \[
        \ell_q(Q) := \frac{-Q \theta_q'(Q)}{\theta_q(Q)}.
    \]
\end{defin}

\begin{lemma}[\cite{Sauloy_qde_regsing}]\label{qde:prop_ell_q_is_a_q_log}
    The function $\ell_q$ is a solution of the $q$-difference equation
    \[
        \qdeop{Q} \ell_q(Q) = \ell_q(Q) + 1.
    \]
\end{lemma}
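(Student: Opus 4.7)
The plan is to derive the lemma directly from the functional equation for $\theta_q$ given in Proposition \ref{qde:prop_theta_qde}, namely $\theta_q(qQ) = Q^{-1} \theta_q(Q)$, by passing to the logarithmic derivative. Since $\ell_q$ is defined as $-Q$ times the logarithmic derivative of $\theta_q$, this should be a short manipulation rather than a genuinely difficult computation.

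Concretely, I would first take the formal logarithm of the identity $\theta_q(qQ) = Q^{-1}\theta_q(Q)$, obtaining
\[
    \log \theta_q(qQ) = -\log Q + \log \theta_q(Q).
\]
Differentiating with respect to $Q$ and applying the chain rule to the left-hand side yields
\[
    q \, \frac{\theta_q'(qQ)}{\theta_q(qQ)} = -\frac{1}{Q} + \frac{\theta_q'(Q)}{\theta_q(Q)}.
\]
Multiplying both sides by $-Q$ then gives $-qQ \, \theta_q'(qQ)/\theta_q(qQ) = 1 - Q\,\theta_q'(Q)/\theta_q(Q)$, which is precisely $\ell_q(qQ) = \ell_q(Q) + 1$, i.e. $\qdeop{Q}\ell_q(Q) = \ell_q(Q) + 1$.

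Since the $\theta_q$ series converges (as noted implicitly in Definition \ref{qde:def_theta}), term-by-term differentiation is legitimate in the appropriate domain of $\mathcal{M}(\mathbb{C}^*)$, so the only mild point to watch is that $\theta_q$ has zeros (on the lattice $-q^{\mathbb{Z}}$), making $\ell_q$ only meromorphic; the identity is derived at points where $\theta_q$ is nonvanishing and then extends to all of $\mathbb{C}^*$ by meromorphic continuation. There is no real obstacle here: the entire content of the lemma is the computation above.
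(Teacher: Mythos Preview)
Your argument is correct and is the standard derivation: differentiate the functional equation $\theta_q(qQ)=Q^{-1}\theta_q(Q)$ logarithmically and recognise $\ell_q$. The paper does not supply its own proof of this lemma, merely citing \cite{Sauloy_qde_regsing}, so there is nothing further to compare.
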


\begin{defin}[\cite{Sauloy_qde_regsing}]
    A regular singular $q$-difference system $q^{Q \partial_Q} X = A_q(Q) X$ is said to be \textit{non resonant} if any couple of two different eigenvalues $\lambda_i \neq \lambda_j$ of the matrix $A_q(0)$ satisfies the condition $\lambda_i \lambda_j^{-1} \notin q^\mathbb{Z}$.
\end{defin}

\begin{thm}[\cite{Sauloy_qde_regsing}, Subsection 1.1.4]\label{qde:thm_qde_frob}
    Let $q^{Q \partial_Q} X_q(Q) = A_q(Q) X_q(Q)$ be a regular singular $q$-difference system which is non resonant.
    There exists a fundamental solution of $\mathcal{X}_q \in \text{GL}_n\left(\mathbb{C}\left\{Q,Q^{-1}\right\}\right)$ of this $q$-difference equation expressed with the function $\ell_q$ of Definition \ref{qde:def_q_logarithm}.
\end{thm}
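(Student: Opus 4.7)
The plan is to follow a $q$-analogue of the Frobenius method: reduce by a gauge transform to a constant-coefficient $q$-difference system, and then solve the constant system explicitly using the special functions $\theta_q$ and $\ell_q$ introduced in Definitions \ref{qde:def_theta} and \ref{qde:def_q_logarithm}.

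First, by Definition \ref{qde:def_regular_singular_qde}, after an initial gauge transform we may assume $A_q(0) \in \textnormal{GL}_n(\mathbb{C})$. I would then look for a second gauge transform of the form $F_q(Q) = \textnormal{Id} + \sum_{d \geq 1} F_d Q^d$ conjugating the whole system to its value at $0$, i.e.\ $F_q \cdot [A_q] = A_q(0)$. Plugging in and identifying coefficients of $Q^d$ yields a recurrence of the shape
\[
    q^d F_d \, A_q(0) - A_q(0) \, F_d = R_d(F_0, \dots, F_{d-1}),
\]
where $R_d$ is built out of the previously determined $F_k$'s and the coefficients of $A_q$. This Sylvester-type equation admits a unique solution $F_d$ as soon as the eigenvalue ratios of $A_q(0)$ avoid the resonant values $q^d$ with $d \geq 1$. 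In the resonant situation one first applies finitely many shear gauge transforms (of the form $\textnormal{diag}(1, \dots, Q, \dots, 1)$) to move the offending eigenvalues out of the resonant locus; this is the standard Birkhoff/shearing algorithm. Convergence of the resulting series $F_q$ follows from the fact that the Sylvester operator has bounded inverse for large $d$.

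Once the system is constant, I would write the multiplicative Jordan decomposition $A_q(0) = A_s A_u$ with $A_s$ semisimple and $A_u$ unipotent, commuting. For each eigenvalue $\lambda$ of $A_s$, the ratio
\[
    e_{q,\lambda}(Q) := \frac{\theta_q(Q)}{\theta_q(\lambda^{-1} Q)}
\]
is a meromorphic function satisfying $\qdeop{Q} e_{q,\lambda}(Q) = \lambda \, e_{q,\lambda}(Q)$, by Proposition \ref{qde:prop_theta_qde}. This produces a diagonalised fundamental solution $E_{q, A_s}$ for the semisimple part. For the unipotent factor, writing $A_u = \exp(N)$ with $N$ nilpotent (so the exponential is a finite sum), Lemma \ref{qde:prop_ell_q_is_a_q_log} gives
\[
    \qdeop{Q} \exp\!\bigl(\ell_q(Q) \, N\bigr) = \exp\!\bigl((\ell_q(Q)+1)N\bigr) = \exp(N)\,\exp\!\bigl(\ell_q(Q)\, N\bigr).
\]
Combining the three ingredients, the matrix
\[
    \mathcal{X}_q(Q) := F_q(Q)^{-1} \, E_{q,A_s}(Q) \, \exp\!\bigl(\ell_q(Q)\, N\bigr)
\]
is invertible and solves the original system, producing a fundamental solution explicitly expressed via $\ell_q$.

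The main obstacle is the gauge reduction step: ensuring that the Sylvester recurrence can be solved and that the resulting series converges. The resonant case, in particular, requires a careful bookkeeping of eigenvalues modulo $q^{\mathbb{Z}}$ and a termination argument for the shearing procedure. The construction of the constant-coefficient solution, by contrast, is essentially formal once the special functions $\theta_q$ and $\ell_q$ with their correct functional equations are available.
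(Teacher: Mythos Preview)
The paper does not give its own proof of this theorem: it is stated as a citation to \cite{Sauloy_qde_regsing}, Subsection 1.1.4, with no argument supplied. There is therefore nothing in the paper to compare your attempt against.

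That said, your sketch is exactly the standard $q$-Frobenius construction carried out in Sauloy's cited work: gauge-reduce a regular singular system to its constant term $A_q(0)$ via a convergent $F_q = \textnormal{Id} + \sum_{d\geq 1} F_d Q^d$ (with shearing transforms to kill resonances), then solve the constant system through the multiplicative Jordan decomposition using $q$-characters for the semisimple part and $\exp(\ell_q(Q)N)$ for the unipotent part. So your outline is correct and aligned with the reference.

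One small correction: with your definition $e_{q,\lambda}(Q) = \theta_q(Q)/\theta_q(\lambda^{-1}Q)$, Proposition \ref{qde:prop_theta_qde} gives $\qdeop{Q} e_{q,\lambda} = \lambda^{-1} e_{q,\lambda}$, not $\lambda\, e_{q,\lambda}$. Compare with the paper's later convention $e_{q,\lambda_q}(Q) = \theta_q(Q)/\theta_q(\lambda_q Q)$, which does satisfy $\qdeop{Q} e_{q,\lambda_q} = \lambda_q\, e_{q,\lambda_q}$. This is a harmless normalisation slip and does not affect the structure of the argument.
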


\begin{prop}[\cite{Har_Sau_Sin_book}, Theorem 3.1.7, p.127]\label{qde:prop_sol_have_polynomial_growth}
    Let $\qdeop{Q} X_q(Q) = A_q(Q) X_q(Q)$ be a non resonant regular singular $q$-difference system of rank $n$.
    For $i \in \{ 1, \dots, n\}$, denote by $X_{(i)}$ the $i^\textnormal{th}$ column of the fundamental solution given by Theorem \ref{qde:thm_qde_frob}.
    Choose any $\nu \in \mathbb{C}, |\nu|=1$ such that the function below is locally well defined:
    \[
        \functiondesc
        {f_{i,\nu}}
        {(\mathbb{R},+\infty)}
        {(\mathbb{C}^*,0)}
        {t}
        {X_{(i)}(\nu q^t)}.
    \]
    Then, the function $f_{i,\nu}$ has polynomial growth at $t=+\infty$.
\end{prop}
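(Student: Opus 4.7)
My plan is to reduce the growth estimate to the explicit form of the fundamental solution produced by Theorem~\ref{qde:thm_qde_frob}. For a regular singular $q$-difference system, after a suitable gauge transformation one can arrange that the matrix of the system evaluates to an invertible matrix at $Q=0$. A Frobenius-style construction then yields a fundamental solution whose columns decompose into finite sums of terms of the shape $h_q(Q) \cdot P(\ell_q(Q)) \cdot e_{c,q}(Q)$. Here $h_q$ is a component of a holomorphic/convergent-Laurent gauge factor, $P$ is a polynomial coming from the Jordan structure of the invertible matrix $A_q(0)$, and $e_{c,q}$ is a character, namely a meromorphic solution of $\qdeop{Q} e_{c,q} = c \cdot e_{c,q}$ attached to an eigenvalue $c$ of $A_q(0)$. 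Only finitely many eigenvalues $c$ and only finitely many powers of $\ell_q$ (bounded by the largest Jordan block size) appear, since the rank $n$ is finite.

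Once this structural decomposition is in hand, the estimate reduces to controlling each of the three building blocks along the trajectory $t \mapsto \nu q^t$. By Lemma~\ref{qde:prop_ell_q_is_a_q_log}, iterating the defining relation $\qdeop{Q} \ell_q = \ell_q + 1$ gives $\ell_q(\nu q^t) = \ell_q(\nu) + t$, so $P(\ell_q(\nu q^t))$ is polynomial in $t$. Iterating the eigenequation of the character gives $e_{c,q}(\nu q^t) = c^{t} \, e_{c,q}(\nu)$, which in the natural variable $|Q| = |\nu q^t|$ is a pure power $|Q|^{\alpha}$ with $\alpha = \operatorname{Re}(\log c / \log q)$; this is exactly moderate growth in $|Q|$ along the spiral. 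Finally, the holomorphic gauge factor $h_q(Q)$, being given by a convergent Laurent series near $Q=0$, is bounded along $\nu q^t$ by a fixed power of $|Q|^{\pm 1}$. Multiplying these three bounds and summing the finitely many terms delivers a polynomial (in the sense of moderate growth) bound on $f_{i,\nu}(t)$ as $t \to -\infty$.

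The main obstacle I anticipate is not analytic but structural: one must extract from the compressed statement of Theorem~\ref{qde:thm_qde_frob} the precise shape of the fundamental solution, including the fact that the only special functions appearing are the characters $e_{c,q}$ indexed by the spectrum of $A_q(0)$ and polynomials in $\ell_q$. Once this is granted, the growth estimates on each block are elementary, and the hypothesis $|\nu|=1$ serves merely to ensure that $\nu$ avoids the discrete polar locus of these meromorphic characters so that $f_{i,\nu}$ is genuinely defined on a neighborhood of $t = -\infty$. No subtle convergence issue arises because the ray $\{\nu q^t : t \in \mathbb{R}\}$ is covered by the product structure that makes each factor transparent.
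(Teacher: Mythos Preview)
The paper does not supply its own proof of this proposition; it is quoted directly from \cite{Har_Sau_Sin_book}, Theorem 3.1.7, with no argument given in the present text. There is therefore nothing here to compare your proposal against.

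Your outline is nonetheless the standard route to this result and is essentially sound. One step needs tightening: the identities $\ell_q(\nu q^{t}) = \ell_q(\nu) + t$ and $e_{c,q}(\nu q^{t}) = c^{t}\,e_{c,q}(\nu)$ follow from ``iterating'' the respective functional equations only for \emph{integer} $t$; for real $t$ they are simply false for Sauloy's concrete choices (for instance $e_{q,c}(Q)=\theta_q(Q)/\theta_q(cQ)$ is not a pure power of $Q$). What does hold, and what suffices for the growth bound, is that both $\ell_q$ and $e_{c,q}$ are meromorphic on $\mathbb{C}^*$ and hence bounded above and below on any compact fundamental annulus $\{|q|\le |Q/\nu|\le 1\}$ that avoids their poles (this is precisely what the choice of $\nu$ guarantees). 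Combining this boundedness with the integer-shift relations gives $\ell_q(\nu q^{t}) = t + O(1)$ and $|e_{c,q}(\nu q^{t})| \asymp |c|^{t}$ for real $t\to -\infty$. With this repair your three-block estimate goes through as written.
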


\subsection{Confluence of regular singular $q$-difference equation}

In this subsection we introduce Sauloy's confluence phenomenon. One of the main ingredient is the following asymptotic for the $q$-logarithm $\ell_q$.

\begin{notation}
    Let $\lambda_q \in \mathbb{C}^*$ be some non zero complex number.
    We will call $q$-spiral the set $\lambda_q q^\mathbb{R}:= \left\{ \lambda_q q^t \, \middle| \, t \in \mathbb{R} \right\} \subset \mathbb{C}^*$.
    Note that its complementary in $\mathbb{C}^*$ is simply connected.
\end{notation}

\begin{prop}[\cite{Sauloy_qde_regsing}, Subsections 3.1.3 and 3.1.4]\label{qde:prop_specfns_limits}
    Fix $q_0 \in \mathbb{C}^*, |q_0|<1$, let $q(t)=q_0^t, t \in (0,1]$. 
        Denote by $\log$ the determination of the logarithm on $\mathbb{C}^*- (-1)q_0^\mathbb{R}$ such that $\log(1) = 0$.
        We have the uniform convergence, on any compact of $\mathbb{C}^*- (-1)q_0^\mathbb{R}$,
        \[
            \lim_{t \to 0} (q(t)-1) \ell_{q(t)}(Q) = \log(Q).
        \]
        
\end{prop}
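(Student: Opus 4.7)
My plan is to start from the explicit formula $\ell_q(Q) = -Q \theta_q'(Q)/\theta_q(Q)$ and use Jacobi's triple product to recast $\ell_q$ as a sum of rational functions in $q^k$, and then identify $(q-1) \ell_q(Q)$ with a Riemann sum whose limit as $q \to 1$ evaluates to $\log Q$. Concretely, Jacobi's triple product gives the factorization
\[
\theta_q(Q) = (q;q)_\infty \cdot (-Q;q)_\infty \cdot (-q/Q;q)_\infty,
\]
and log-differentiation produces the identity
\[
\ell_q(Q) = -\sum_{k \geq 0} \frac{q^k Q}{1+q^k Q} + \sum_{k \geq 1} \frac{q^k}{Q+q^k}.
\]
This formula already encodes the role of the spiral $(-1) q_0^{\mathbb{R}}$: the poles of $\ell_q$ are at $Q = -q^k$ for $k \in \mathbb{Z}$, and as $q = q_0^t$ ranges over $(0,1]$ these poles sweep out exactly this spiral.

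Next, I would multiply both sides by $q-1$ and recognize each of the two sums as a Riemann sum over a geometric progression. With $x_k := q^k Q$ one has $\Delta x_k := x_{k+1}-x_k = (q-1) x_k$, so
\[
(q-1)\sum_{k \geq 0}\frac{q^k Q}{1+q^k Q} = \sum_{k \geq 0} \frac{\Delta x_k}{1+x_k} \longrightarrow \int_{Q}^0 \frac{dx}{1+x} = -\log(1+Q)
\]
as $q \to 1$, and an analogous computation (with $y_k := q^k$) for the second sum gives the limit $-\log(1+1/Q)$. Combining the two pieces yields $\lim_{q \to 1}(q-1)\ell_q(Q) = \log(1+Q) - \log(1+1/Q) = \log Q$, provided the branches are chosen coherently.

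The third step is to upgrade the pointwise convergence to uniform convergence on compact subsets $K \subset \mathbb{C}^* \setminus (-1) q_0^{\mathbb{R}}$. The key input is a uniform lower bound $\min_k |1+q^k Q| \geq c_K > 0$ (and similarly for $|Q + q^k|$) for $Q \in K$ and $q = q_0^t$ with $t$ in a neighborhood of $0$, which holds because the set of poles $\{-q^{\pm k}\}_k$ accumulates precisely on the excluded spiral, from which $K$ stays at positive distance. Given this, standard Riemann sum error estimates (for instance via Euler--Maclaurin summation on the geometric progression) would produce an error of order $O(q-1)$ uniformly on $K$.

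The main obstacle is precisely this final uniform estimate near the excluded spiral: as $Q$ approaches $(-1) q_0^{\mathbb{R}}$ at least one denominator $1 + q^k Q$ or $Q + q^k$ becomes small, and this forces careful control over the convergence rate along the spiral direction. A secondary but essential bookkeeping step is matching the branch of $\log$ produced by the integral computation, determined by analytic continuation starting from $Q = 1$, with the branch on $\mathbb{C}^* \setminus (-1) q_0^{\mathbb{R}}$ normalized by $\log(1) = 0$ specified in the statement; this amounts to verifying that the identity $\log(1+Q) - \log(1+1/Q) = \log Q$ holds with the correct determination on the prescribed domain.
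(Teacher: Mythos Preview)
The paper does not supply its own proof of this proposition; it is quoted from \cite{Sauloy_qde_regsing}, Subsections 3.1.3--3.1.4. Your approach---Jacobi's triple product for $\theta_q$, logarithmic differentiation, and the reinterpretation of the two resulting series as Riemann sums along the $q$-spiral---is exactly Sauloy's route, and the formal computations you wrote down are correct. You have also correctly identified the two places where the real analytic work sits: the uniform lower bound on $|1+q^k Q|$ and $|Q+q^k|$ for $Q$ in a compact subset of $\mathbb{C}^*\setminus(-1)q_0^{\mathbb{R}}$ (which is what makes the Riemann-sum error estimate uniform), and the matching of the branch of $\log$ produced by the spiral integration with the one normalized by $\log(1)=0$ on the slit plane.
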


\begin{remark}
    Let $\delta_q = \frac{\qdeop{Q}-\textnormal{Id}}{q-1}$. We recall that the formal limit of this $q$-difference operator is the differential operator $Q \partial_Q$.
    A motivation to consider the function $(q-1)\ell_q$ instead of the usual $q$-logarithm $\ell_q$ is that we have
    \[
        \delta_q
        \begin{pmatrix}
            1               \\
            (q-1)\ell_q(Q)
        \end{pmatrix}
        =
        \begin{pmatrix}
            0   &   0   \\
            1   &   0   \\
        \end{pmatrix}
        \begin{pmatrix}
            1               \\
            (q-1)\ell_q(Q)
        \end{pmatrix}
    \]
    Notice that the formal limit of this $q$-difference system is the differential system satisfied by the logarithm, while the matrix associated to the $q$-difference equation of $\ell_q$ has no limit when $q \to 1$.
\end{remark}

\subsubsection*{Confluence of the $q$-difference equation}

The first step is to define which regular singular $q$-difference equations have a well behaved formal limit.

\begin{defin}[\cite{Sauloy_qde_regsing} Section 3.2]\label{qde:def_confluentsys}
    Let $q_0 \in \mathbb{C}, |q_0|<1$, and set $q(t)=q_0^t$, for $t \in (0,1]$.
    A regular singular non resonant $q$-difference system $q^{Q \partial_Q} X = A_q(Q) X$ is said to be \textit{confluent} if it satisfies the four conditions below.
    Set $B_q(Q) = \frac{A_q(Q) - \text{Id}}{q-1}$, whose coefficients have poles in the input $Q$ that we will denote by $Q_1(q), \dots, Q_k(q)$. We require that
    \begin{enumerate}[label=(\roman*)]
        \item The $q$-spirals satisfy $\bigcap_{i=1}^k Q_i(q_0) q_0^\mathbb{R} = \varnothing$.

        \item There exists a matrix $\Tilde{B} \in \text{GL}_n \left(\mathbb{C}(Q)\right)$ such that
        \[
            \lim_{t \to 0} B_{q(t)} = \Tilde{B},
        \]
        uniformly in $Q$ on any compact of $\mathbb{C}^* - \bigcup_{i=0}^k Q_i q_0^\mathbb{R}$, where $Q_0 := 1$.
        
        \item This limit defines a regular singular, non resonant differential system
        \begin{equation*}
            Q \partial_Q \widetilde{X} = \widetilde{B} \widetilde{X} ,
        \end{equation*}
        with distinct singularities $\widetilde{Q}_i = \lim_{t \to 0} Q_i(q)$.
        
        \item There exists Jordan decompositions $B_{q(t)}(0)={P_{q(t)}}^{-1} J_{q(t)} P_{q(t)} \, , \, t \in (0,1]$ and $\widetilde{B}(0)=\widetilde{P}^{-1} \widetilde{J} \widetilde{P}$;
        such that \[ \lim_{t \to 0} P_{q(t)} = \widetilde{P}. \]
    \end{enumerate}
\end{defin}

\subsubsection*{Confluence of the solutions}

\begin{thm}[\cite{Sauloy_qde_regsing}, Section 3.3]\label{qde:thm_confluence_of_sol_with_initial_cond}
    Let $q_0 \in \mathbb{C}, |q_0|<1$, and set $q(t)=q_0^t$, for $t\in(0,1]$.
    Consider a regular singular confluent $q$-difference system \[q^{Q \partial_Q} X_q(Q) = A_q(Q) X_q(Q),\] whose formal limit when $q \to 1$ is the differential system $Q \partial_Q X(Q) = \widetilde{B}(Q) X(Q)$.
    
    We assume that there exists a vector $X_0 \in \mathbb{C}^n - \{0\}$, independent of $q$, such that $A_{q(t)} X_0 = X_0$ for all $t \in (0,1]$.
    We also assume that we have a solution $X_q$ of the $q$-difference system satisfying the initial condition $X_q(0)=X_0$.
    
    Let $\widetilde{X}$ be the unique solution of $Q \partial_Q X(Q) = \widetilde{B}(Q) X(Q)$ satisfying the initial condition $\widetilde{X}(0)=X_0$.
    Then,
    \[
        \lim_{t \to 0} X_{q(t)}(Q) = \widetilde{X}(Q),
    \]
    uniformly in $Q$ on any compact of $\mathbb{C}^* - \bigcup_{i=0}^k Q_i q_0^\mathbb{R}$.
\end{thm}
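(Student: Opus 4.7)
The plan is to reduce the convergence of solutions to the convergence of their Taylor coefficients at $Q=0$, exploiting the fact that the hypothesis $A_{q(t)} X_0 = X_0$ forces the solution $X_q$ to be holomorphic (and not merely meromorphic with $\ell_q$-terms) near $Q=0$. First, I would set $\delta_q := (\qdeop{Q} - \textnormal{Id})/(q-1)$ and $B_q(Q) := (A_q(Q)-\textnormal{Id})/(q-1)$ and rewrite the system as $\delta_q X_q = B_q X_q$; its formal limit as $q \to 1$ is precisely $Q\partial_Q \widetilde X = \widetilde B \widetilde X$. The invariance $A_{q(t)} X_0 = X_0$ gives $B_{q(t)}(0) X_0 = 0$, so $X_0$ lies in the kernel of $B_{q(t)}(0)$ for every $t$, and by passing to the limit also of $\widetilde B(0)$.

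Near $Q=0$, expand $X_q(Q) = X_0 + \sum_{k \geq 1} X_k(q) Q^k$. Using $\delta_q Q^k = [k]_q Q^k$ with $[k]_q = (q^k-1)/(q-1)$ and writing $B_q(Q) = \sum_{j \geq 0} B_{q,j} Q^j$, substituting into the system and identifying coefficients of $Q^k$ yields the recursion
\begin{equation*}
    \bigl( [k]_q \, \textnormal{Id} - B_{q,0} \bigr) X_k(q) = \sum_{j=1}^{k} B_{q,j}\, X_{k-j}(q), \qquad k \geq 1.
\end{equation*}
The non-resonance of the system makes the operator on the left invertible for every $k \geq 1$ and every $t \in (0,1]$: the eigenvalue $1$ of $A_{q(t)}(0)$ yields $[k]_{q(t)} \neq 0$ (as $|q(t)| < 1$), while for any other eigenvalue $\lambda$, non-resonance against the eigenvalue $1$ gives $\lambda \notin q_0^{\mathbb{Z}}$, hence $q(t)^k \neq \lambda$. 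Passing to the limit $t \to 0$ in the recursion, using $[k]_{q(t)} \to k$ and $B_{q(t),j} \to \widetilde B_j$ (all coefficient-wise limits coming from the confluence hypothesis), produces exactly the recursion that determines the Taylor coefficients of $\widetilde X$ from $\widetilde X(0) = X_0$. Uniqueness for the differential system identifies the resulting formal power series as $\widetilde X$.

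Coefficient-wise convergence must then be upgraded to uniform convergence on a fixed closed disk around $0$ via a standard majorant argument, using uniform estimates on $\|B_{q(t),j}\|$ and a uniform lower bound (in $t$) on the smallest singular value of $[k]_{q(t)} \textnormal{Id} - B_{q(t),0}$, which yield a geometric bound on $\|X_k(q(t))\|$ independent of $t$. To extend uniform convergence to the full domain $\mathbb{C}^* \setminus \bigcup_i Q_i(q_0) q_0^{\mathbb{R}}$, I would propagate via the functional equation $X_q(Q) = A_q(Q/q)^{-1} X_q(Q/q)$, comparing it iteratively to an Euler-type discretisation of the limit ODE along a path from the disk to a given point $Q$. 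The main obstacle is precisely this last step: the poles of $A_{q(t)}$ move with $t$ and coalesce in the limit onto the fixed singular points $\widetilde Q_i$, so one must show that the propagation paths can be chosen uniformly in $t$ inside the good domain, and that the discretisation error vanishes in the limit. Condition (i) in Definition \ref{qde:def_confluentsys} (disjointness of the spirals) together with the uniform convergence $B_{q(t)} \to \widetilde B$ on compacts of the good domain is exactly what allows such a uniform choice of paths and controls the continuation.
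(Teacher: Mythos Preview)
The paper does not give a proof of this theorem: it is quoted from \cite{Sauloy_qde_regsing}, Section~3.3, as background material in the survey Section~\ref{section:QDE}, and no argument is supplied. So there is nothing in the paper itself to compare your proposal against.

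For what it is worth, your outline is the standard strategy and is essentially Sauloy's own: pass to $\delta_q$ and $B_q$, solve the Taylor recursion at $Q=0$, obtain coefficientwise convergence, upgrade to uniform convergence on a small disk by a majorant argument, then propagate outward by iterating the functional equation. One small correction: in your invertibility step, the eigenvalues of $A_{q(t)}(0)$ depend on $t$, and non-resonance is the condition $\lambda(t) \notin q(t)^{\mathbb{Z}}$ for each $t$ (using that $1$ is an eigenvalue), not $\lambda \notin q_0^{\mathbb{Z}}$; this is what actually yields $q(t)^k \neq \lambda(t)$ for the recursion at parameter $q(t)$. The genuine analytic work, as you rightly flag, lies in the uniform-in-$t$ majorant estimate (note that $[k]_{q(t)}$ stays bounded as $k\to\infty$ for fixed $t$, unlike the differential coefficient $k$, so the interplay between $k\to\infty$ and $t\to 0$ needs care) and in the propagation step; conditions (i)--(ii) and (iv) of Definition~\ref{qde:def_confluentsys} are precisely the hypotheses Sauloy uses to carry these through.
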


Applying this theorem to the fundamental solution given by Theorem \ref{qde:thm_qde_frob} gives the corollary below.

\begin{coro}[\cite{Sauloy_qde_regsing}, Subsections 3.2.4 and 3.4]\label{qde:coro_confluence_for_fund_sols}
    Let $q_0 \in \mathbb{C}, |q_0|<1$, and set $q(t)=q_0^t$, for $t\in(0,1]$.
    Let $q^{Q \partial_Q} X_q(Q) = A_q(Q) X_q(Q)$ be a confluent regular singular $q$-difference system, whose formal limit when $q \to 1$ is the differential system $Q \partial_Q X(Q) = \widetilde{B}(Q) X(Q)$.
    Denote by $\mathcal{X}_{q_0}$ the fundamental solution of the $q$-difference given by Theorem \ref{qde:thm_qde_frob}.
    Then, the limit matrix $\lim_{t \to 0} \mathcal{X}_{q(t)}$ is a fundamental solution of the differential equation $Q \partial_Q X(Q) = \widetilde{B}(Q) X(Q)$.
\end{coro}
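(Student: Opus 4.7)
The strategy is to apply Theorem \ref{qde:thm_confluence_of_sol_with_initial_cond} column by column to the fundamental solution $\mathcal{X}_{q(t)}$, after first absorbing the transcendental factors (the $q$-characters associated with the eigenvalues of $A_q(0)$ and the $q$-logarithms $\ell_q$) into renormalisations, so that the hypothesis $A_q X_0 = X_0$ of that theorem becomes available on each column.

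First I would unpack the Frobenius-type construction behind Theorem \ref{qde:thm_qde_frob}. In the non-resonant regular singular setting, that fundamental solution admits an explicit factorisation $\mathcal{X}_q(Q) = F_q(Q) \cdot e_{q,A_q(0)}(Q)$, where $F_q$ is an analytic gauge transformation, holomorphic and invertible at $Q=0$, which conjugates $A_q$ into the constant matrix $A_q(0)$, and $e_{q,A_q(0)}$ is the canonical fundamental solution of the constant-coefficient system $q^{Q\partial_Q} Y = A_q(0) Y$. This canonical solution is block-diagonal with respect to the Jordan decomposition $P_q^{-1} J_q P_q$ of $A_q(0)$: each block combines a $q$-character of the eigenvalue $\lambda_i(q)$ with polynomial expressions in $\ell_q(Q)$ coming from the Jordan off-diagonals. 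Under the confluence hypotheses, the eigenvalues $\lambda_i(q(t))$ lie on $q$-spirals of the form $q^{\alpha_i}$ with $\alpha_i$ the eigenvalues of $\widetilde{B}(0)$, and condition (iv) of Definition \ref{qde:def_confluentsys} ensures that $P_{q(t)} \to \widetilde{P}$.

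Next I would isolate each column of $\mathcal{X}_{q(t)}$ and rescale it. For the $i$-th column, multiplication by $e_{q,\lambda_i(q)}^{-1}(Q)$ and replacement of the $\ell_q$-powers by $(q-1)\ell_q$-powers converts that column into a solution of the gauge-equivalent $q$-difference system with matrix $\lambda_i(q)^{-1} A_q(Q)$, whose value at $Q=0$ fixes the vector given by the corresponding column of $P_q^{-1}$. By condition (iv) of confluence these vectors converge to the columns of $\widetilde{P}^{-1}$, so one can choose a common eigenvector $X_{0,i}$ independent of $t$ to play the role of the initial condition in Theorem \ref{qde:thm_confluence_of_sol_with_initial_cond}. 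That theorem then yields uniform convergence of each renormalised column on every compact subset of $\mathbb{C}^* - \bigcup_{i=0}^k Q_i q_0^{\mathbb{R}}$ to the unique solution of the limiting differential system $Q\partial_Q X = \widetilde{B}(Q) X$ with initial value $X_{0,i}$.

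Reassembling the columns and restoring the renormalisations using Proposition \ref{qde:prop_specfns_limits}, which provides the pointwise limits $(q-1)\ell_q(Q) \to \log(Q)$ and $e_{q,q^{\alpha_i}}(Q) \to Q^{\alpha_i}$ on the appropriate domain, produces a matrix-valued limit $\widetilde{\mathcal{X}}(Q)$ whose columns all solve the limit differential system. Invertibility of $\widetilde{\mathcal{X}}$, and hence the conclusion that it is a fundamental solution, follows by evaluating the limiting gauge near $Q=0$: the corresponding constant matrix is $\widetilde{P}^{-1}$, invertible by construction of the Jordan decomposition in condition (iv). The main obstacle will be the bookkeeping of the normalisations: the $q$-characters $e_{q,\lambda_i(q)}(Q)$ do not individually converge as $q \to 1$, and they only produce finite limits once they are paired column by column with the appropriate powers of $(q-1)$ absorbed from the $\ell_q$ factors. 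Choosing this rescaling so that, after passing to the limit, the transcendental factors collapse precisely to the $Q^{\alpha_i}$ and $\log Q$ pieces expected in the Frobenius fundamental solution of the differential system, is the technically delicate step that forces the rest of the argument to be arranged column by column rather than matrix-wise.
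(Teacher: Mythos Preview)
Your proposal is correct and follows the same line the paper indicates: the paper does not give a self-contained proof of this corollary but simply remarks, just before stating it, that one obtains it by ``applying this theorem [Theorem~\ref{qde:thm_confluence_of_sol_with_initial_cond}] to the fundamental solution given by Theorem~\ref{qde:thm_qde_frob}'' and otherwise defers to Sauloy's paper for the details. Your column-by-column reduction via the factorisation $\mathcal{X}_q = F_q \cdot e_{q,A_q(0)}$, followed by stripping off the $q$-characters to place each column in the setting of Theorem~\ref{qde:thm_confluence_of_sol_with_initial_cond} and then restoring the transcendental factors through Proposition~\ref{qde:prop_specfns_limits}, is exactly how Sauloy carries this out in the cited subsections, so you are not diverging from the intended argument but rather unpacking it.

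One small caution on the bookkeeping: Theorem~\ref{qde:thm_confluence_of_sol_with_initial_cond} as stated requires an initial vector $X_0$ independent of $t$ with $A_{q(t)} X_0 = X_0$, whereas the eigenvectors of $A_{q(t)}(0)$ genuinely move with $t$. The clean way around this (and what Sauloy does) is to first conjugate by $P_{q(t)}$ so that the constant part is already in Jordan form; then the relevant initial vectors are standard basis vectors and condition~(iv) of Definition~\ref{qde:def_confluentsys} guarantees the conjugation itself converges. Your sentence ``one can choose a common eigenvector $X_{0,i}$ independent of $t$'' glosses over this step. Also note that Proposition~\ref{qde:prop_specfns_limits} as recorded in this paper only states the limit for $(q-1)\ell_q$; the companion limit $e_{q,q^{\alpha}} \to Q^{\alpha}$ that you invoke is in Sauloy but is not restated here, so if you were writing this up you would need to cite it separately.
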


However, not any fundamental solution of a confluent $q$-difference system has immediately a well defined limit when $t \to 0$, e.g. the constant function $(1-q)^{-1}$ is a $q$-constant.
We therefore introduce the following definition:

\begin{defin}\label{qde:def_confluent_solution}
    Let $\qdeop{Q} X = A_q X$ be a confluent $q$-difference system. A fundamental solution $\mathcal{X}_q$ is \textit{confluent} if $\lim_{t \to 0} \mathcal{X}_{q^t}$ exists and is a fundamental solution of the formal limit of the confluent $q$-difference system $\qdeop{Q} X = A_q X$.
\end{defin}
\section{Confluence for quantum $K$-theory of projective spaces}\label{section:QK_QDE}

\subsection{Equivariant $J$-functions}

\subsubsection*{Definitions.}

Let $N \in \mathbb{Z}_{>0}$ be some positive integer and consider the projective space $X = \mathbb{P}^N$ with the action of the torus $T^{N+1} := (\mathbb{C}^*)^{N+1}$ given by
\[
    (\lambda_0, \dots, \lambda_N) \cdot [z_0 : \cdots : z_N] = [\lambda_0 z_0 : \cdots : \lambda_N z_N].
\]
The elementary representations, indexed by $i \in \{0, \dots, N\}$,
\[
    \functiondesc
    {\rho_i}
    {(\mathbb{C}^*)^{N+1}}
    {\mathbb{C}^*}
    {(t_0,\dots,t_N)}
    {t_i},
\]
define $N+1$ classes in equivariant $K$-theory $\Lambda_0,\dots,\Lambda_N \in K_{T^{N+1}}(\textnormal{pt})$, where $- \Lambda_i$ is the line bundle on the point with the action of the group $T^{N+1}$ given by $\rho_i$.
Denote by $P=\mathcal{O}_\textnormal{eq}(1) \in K_{T^{N+1}}\left( \mathbb{P}^N \right)$ the anti-tautological equivariant bundle, $H=c_1( \mathcal{O}_\textnormal{eq}(1))$ the equivariant hyperplane class and $\lambda_i = c_1 (\Lambda_i) \in H^2_{T^{N+1}}(\textnormal{pt})$.
We recall that we have
\begin{align*}
    K_{T^{N+1}} \left( \mathbb{P}^N \right)
    &\simeq
    \left.
        \mathbb{Z}[\Lambda_0^{\pm 1},\dots,\Lambda_N^{\pm 1}][P^{\pm 1}]
    \middle/
        \left(
            (1 - \Lambda_0 P^{-1}) \cdots (1 - \Lambda_N P^{-1})
        \right)
    \right. ,
    \\
    H_{T^{N+1}}^*\left(\mathbb{P}^N ; \mathbb{Q} \right)
    &\simeq
    \left.
        \mathbb{Q}[\lambda_0,\dots,\lambda_N][H]
    \middle/
        \left( (H-\lambda_0) \cdots (H - \lambda_N) \right)
    \right. .
\end{align*}
A basis of the equivariant $K$-theory $K_{T^{N+1}} \left( \mathbb{P}^N \right)$ is given by the classes indexed by $i \in \{0, \dots, N\}$
\[
    \eta_i = \prod_{j \neq i} \frac{1 - \Lambda_j P^{-1}}{1 - \Lambda_j \Lambda_i^{-1}} \in K_{T^{N+1}} \left( \mathbb{P}^N \right).
\]

\begin{defin}[\cite{CK_book}, Subsection 11.2.3]
    \textit{Givental's small equivariant cohomological $J$-function} of the projective space $\mathbb{P}^N$ is the function defined by
    \begin{align*}
        J^{\textnormal{coh},\textnormal{eq}}(z,Q) 
        &=
        Q^{\frac{H}{z}} \sum_{d \geq 0} \frac{Q^d}{\prod_{r=1}^d \left( H - \lambda_0 + rz\right) \cdots \left( H - \lambda_N + rz\right)}
        \\
        &\in
        H_{T^{N+1}}^*\left( \mathbb{P}^N \right) \otimes \mathbb{C}[\![z,z^{-1}]\!].
    \end{align*}
\end{defin}

\begin{remark}
    The reader familiar with Gromov--Witten theory may notice several abuses in this definition of the $J$-function.
    The proper way to define them would be from the fundamental of the quantum $\mathcal{D}$-module (see e.g. \cite{CK_book}, Equation 10.28 and \cite{Iri_Mil_Ton_qk}, Definition 2.4).
    We also confuse the $I$-function and the $J$-function for projective spaces due to the triviality of the mirror map for complex projective spaces.
\end{remark}

\begin{prop}
    The cohomological $J$-function $J^{\textnormal{coh},\textnormal{eq}}$ is a solution of the differential equation
    \begin{equation}\label{qkqde:eqn_pde_jh_eq}
        \left[(-\lambda_0 + z Q \partial_Q) \cdots (- \lambda_N + z Q \partial_Q) - Q\right]
        J^{\textnormal{coh},\textnormal{eq}}(z,Q) = 0.
    \end{equation}
\end{prop}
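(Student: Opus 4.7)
The plan is to verify the differential equation term by term on the series expansion of $J^{\textnormal{coh},\textnormal{eq}}$. Write $J^{\textnormal{coh},\textnormal{eq}} = \sum_{d \geq 0} J_d$, where
\[
    J_d = \frac{Q^{H/z + d}}{\prod_{r=1}^d (H - \lambda_0 + rz) \cdots (H - \lambda_N + rz)}.
\]
The key observation is that $zQ\partial_Q$ acts on $Q^{H/z + d}$ as multiplication by the scalar $H + dz$, so each operator $(-\lambda_i + zQ\partial_Q)$ acts on $J_d$ as multiplication by $(H - \lambda_i + dz)$. Thus
\[
    \prod_{i=0}^N (-\lambda_i + zQ\partial_Q) J_d = \prod_{i=0}^N (H - \lambda_i + dz) \, J_d.
\]

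Next, I would observe that for $d \geq 1$, the product $\prod_{i=0}^N (H - \lambda_i + dz)$ is exactly the $r = d$ factor in the denominator of $J_d$, so it cancels and yields
\[
    \prod_{i=0}^N (H - \lambda_i + dz) \, J_d = \frac{Q^{H/z + d}}{\prod_{r=1}^{d-1} \prod_{i=0}^N (H - \lambda_i + rz)} = Q \cdot J_{d-1}.
\]
For $d = 0$, the expression reduces to $\prod_{i=0}^N (H - \lambda_i) \cdot Q^{H/z}$, which vanishes because $\prod_{i=0}^N (H - \lambda_i) = 0$ is precisely the relation defining $H^*_{T^{N+1}}(\mathbb{P}^N)$. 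This handles the boundary term and makes the telescoping work.

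Summing over $d$ then gives
\[
    \prod_{i=0}^N (-\lambda_i + zQ\partial_Q) \, J^{\textnormal{coh},\textnormal{eq}} = \sum_{d \geq 1} Q \, J_{d-1} = Q \sum_{d \geq 0} J_d = Q \cdot J^{\textnormal{coh},\textnormal{eq}},
\]
which is the claimed differential equation. There is no real obstacle here: the proof is a direct computation, with the only subtle point being that the vanishing of the $d = 0$ boundary term relies on the cohomological relation $\prod_i(H - \lambda_i) = 0$ in equivariant cohomology, and one must be careful that $H$ commutes with the scalar operator $zQ\partial_Q$ so the order of factors in the product of operators is immaterial.
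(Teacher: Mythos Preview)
Your argument is correct: the termwise computation $zQ\partial_Q\, Q^{H/z+d}=(H+dz)Q^{H/z+d}$, the telescoping for $d\geq 1$, and the vanishing of the $d=0$ term via the relation $\prod_i(H-\lambda_i)=0$ in $H^*_{T^{N+1}}(\mathbb{P}^N)$ are all valid, and the remark about commutativity of $H$ with $zQ\partial_Q$ is the right point to flag. The paper itself states this proposition without proof, treating it as a standard fact from the literature (cf.\ the citation of \cite{CK_book} in the introduction), so your direct verification supplies an argument the paper omits rather than differing from one it gives.
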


\begin{defin}[\cite{Giv_PermEquiv_II}]\label{qkqde:def_J_fn_eq}
    \textit{Givental's equivariant small $K$-theoretical $J$-function} of the projective space $\mathbb{P}^N$ is the function defined by
    \begin{equation}\label{qkqde:eqn_J_def_eq}
        J^{K\textnormal{th},\textnormal{eq}}(q,Q) 
        =
        P^{-\ell_q(Q)}
        \sum_{d \geq 0} \frac{Q^d}{\left(q\Lambda_0 P^{-1}, \dots,q\Lambda_N P^{-1} ;q \right)_d},
    \end{equation}
    where
    \begin{align*}
        \left(q\Lambda_0 P^{-1}, \dots,q\Lambda_N P^{-1} ;q \right)_d
        &=
        \prod_{i=0}^N
        \left(q\Lambda_i P^{-1} ;q \right)_d,
        \\
        P^{-\ell_q(Q)}
        &=
        \sum_{i=0}^N
        \Lambda_i^{-\ell_q(Q)}
        \eta_i.
    \end{align*}
\end{defin}

\begin{prop}[\cite{Giv_PermEquiv_V}]
    The $K$-theoretical $J$-function $J^{K\textnormal{th},\textnormal{eq}}$ is a solution of the $q$-difference equation, which is regular singular at $Q=0$ :
    \begin{equation}\label{qkqde:eqn_JK_eq}
        \left[ \left( 1 - \Lambda_0 \qdeop{Q} \right) \cdots \left( 1 - \Lambda_N \qdeop{Q} \right) - Q \right]  J^{K\textnormal{th,eq}}(q,Q) = 0.
    \end{equation}
\end{prop}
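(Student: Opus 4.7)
The plan is to reduce the statement to an elementary identity on formal power series by factoring the prefactor $P^{-\ell_q(Q)}$ out of the $q$-difference operator. Using the orthogonal idempotent decomposition $\{\eta_i\}_{i=0}^N$, which satisfies $P \eta_i = \Lambda_i \eta_i$, together with $\qdeop{Q} \ell_q = \ell_q + 1$ from Lemma \ref{qde:prop_ell_q_is_a_q_log}, one computes
\[
    \qdeop{Q} P^{-\ell_q(Q)} = \sum_{i=0}^N \Lambda_i^{-\ell_q(Q) - 1} \eta_i = P^{-1} P^{-\ell_q(Q)}.
\]
Writing $J^{K\textnormal{th,eq}} = P^{-\ell_q(Q)} F(q, Q)$ with $F(q,Q) = \sum_{d \geq 0} a_d(q) Q^d$ and $a_d(q) := \prod_{i=0}^N (q\Lambda_i P^{-1}; q)_d^{-1}$, each factor $(1 - \Lambda_i \qdeop{Q})$ passes through $P^{-\ell_q(Q)}$ and becomes $(1 - \Lambda_i P^{-1} \widetilde{\sigma}_q)$, where $\widetilde{\sigma}_q$ denotes the shift $Q \mapsto qQ$ acting only on the power series $F$. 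The identity (\ref{qkqde:eqn_JK_eq}) thus reduces to the purely algebraic claim
\[
    \prod_{i=0}^N (1 - \Lambda_i P^{-1} \widetilde{\sigma}_q) F(q, Q) = Q \, F(q, Q) \quad \text{in } K_{T^{N+1}}(\mathbb{P}^N)[\![Q]\!].
\]

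For this reduced identity I would compute monomial by monomial: the operator sends $Q^d$ to $\prod_{i=0}^N (1 - q^d \Lambda_i P^{-1}) Q^d$, while the telescoping relation $(q\Lambda_i P^{-1}; q)_d = (q\Lambda_i P^{-1}; q)_{d-1}(1 - q^d \Lambda_i P^{-1})$ from Definition \ref{qde:def_qpoch} yields the recursion $a_d \prod_i (1 - q^d \Lambda_i P^{-1}) = a_{d-1}$ for $d \geq 1$. The $d = 0$ contribution is annihilated by the defining relation $\prod_{i=0}^N (1 - \Lambda_i P^{-1}) = 0$ of $K_{T^{N+1}}(\mathbb{P}^N)$, so the left-hand side collapses to $\sum_{d \geq 1} a_{d-1} Q^d = Q F(q, Q)$, as required.

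For the regular singular assertion I would recast (\ref{qkqde:eqn_JK_eq}) in system form using the vector $X_q(Q) = (J^{K\textnormal{th,eq}}, \qdeop{Q} J^{K\textnormal{th,eq}}, \dots, \qdeop{Q}^N J^{K\textnormal{th,eq}})^T$. Expanding $\prod_i (1 - \Lambda_i \qdeop{Q}) = \sum_{k=0}^{N+1} (-1)^k e_k(\Lambda) \qdeop{Q}^k$, with leading coefficient $(-1)^{N+1} \prod_i \Lambda_i \neq 0$, gives a companion system $\qdeop{Q} X_q = A_q(Q) X_q$ whose value $A_q(0)$ is the companion matrix of $\prod_i (1 - \Lambda_i \mu)$, with spectrum $\{\Lambda_i^{-1}\}_{i=0}^N$. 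These eigenvalues are all nonzero, so $A_q(0) \in \textnormal{GL}_{N+1}(\mathbb{C})$ and Definition \ref{qde:def_regular_singular_qde} applies directly, with no further gauge transform required. I do not foresee any serious obstacle beyond careful bookkeeping: the only subtle step is that $P^{-\ell_q(Q)}$ must be manipulated through its spectral decomposition on the $\eta_i$, but once the commutation $\qdeop{Q} P^{-\ell_q(Q)} = P^{-1} P^{-\ell_q(Q)}$ is established, the rest is a standard power-series verification.
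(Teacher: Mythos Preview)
Your argument is correct. The paper itself does not supply a proof of this proposition: it is stated with a citation to \cite{Giv_PermEquiv_V} and taken as known, so there is no ``paper's own proof'' to compare against. What you have written is the standard direct verification, and each step goes through as you describe. The conjugation identity $\qdeop{Q}\,P^{-\ell_q(Q)} = P^{-1} P^{-\ell_q(Q)}$ is exactly the content of the paper's later remark on $q$-characters, the telescoping recursion $a_d \prod_i (1 - q^d \Lambda_i P^{-1}) = a_{d-1}$ is immediate from the definition of the $q$-Pochhammer symbol, and the $d=0$ term is killed precisely by the defining relation of $K_{T^{N+1}}(\mathbb{P}^N)$. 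Your treatment of regular singularity via the companion system is also correct: the characteristic polynomial of $A_q(0)$ is $c_{N+1}^{-1}\prod_i(1-\Lambda_i\mu)$, so the eigenvalues are the $\Lambda_i^{-1}$ and $A_q(0)$ is invertible, which is all Definition~\ref{qde:def_regular_singular_qde} requires.
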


\begin{remark}[On the inputs $z$ and $q$]
    Geometrically, the input $q$ (resp. $z$) can be understood as a generator of the $\mathbb{C}^*$-equivariant $K$-theory (resp. cohomology) of the point, see Section 2.6 of \cite{Iri_Mil_Ton_qk} for details.
    Then, these generators are related by the identity $z = - c_1(q) \in H^*_{\mathbb{C}^*}(\textnormal{pt})$.
\end{remark}

\subsubsection*{A remark on the choice of the function $P^{-\ell_q(Q)}$.}

This part will be a comparison between the $K$-theoretical function $P^{-\ell_q(Q)}$ we have introduced in the Definition \ref{qkqde:def_J_fn_eq} and the usual $q$-characters $e_{q,\lambda_q}$ that appear in the analytic theory of regular singular $q$-difference equations.
This optional part is independent of the main theorem. The reader may want to skip to Subsection \ref{qkqde:subsection_main_theorem_equivariant}.

\begin{prop}
    The $K$-theoretical function defined by
    \[
    P^{-\ell_q(Q)}
        :=
        \sum_{i=0}^N
        \Lambda_i^{-\ell_q(Q)}
        \eta_i
    \]
    is a solution of the $K$-theoretically valued $q$-difference equation
    \[
        \qdeop{Q} f(Q) = P^{-1} f(Q).
    \]
\end{prop}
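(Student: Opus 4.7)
The plan is to reduce the $q$-difference equation to a term-by-term computation on each summand $\Lambda_i^{-\ell_q(Q)} \eta_i$, exploiting two facts: that the fixed-point classes $\eta_i$ are $Q$-independent, and that they diagonalize multiplication by $P^{-1}$ with eigenvalues $\Lambda_i^{-1}$. Once these two facts are in hand, the functional equation for $\ell_q$ from Lemma \ref{qde:prop_ell_q_is_a_q_log} supplies exactly the shift needed to convert the action of $\qdeop{Q}$ on $\Lambda_i^{-\ell_q(Q)}$ into multiplication by $\Lambda_i^{-1}$, which the $\eta_i$-basis then interprets as multiplication by $P^{-1}$.

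Concretely, I would proceed as follows. First, since the classes $\eta_i \in K_{T^{N+1}}(\mathbb{P}^N)$ do not depend on $Q$, I have
\[
    \qdeop{Q} P^{-\ell_q(Q)}
    = \sum_{i=0}^N \left( \qdeop{Q} \Lambda_i^{-\ell_q(Q)} \right) \eta_i.
\]
Second, interpreting $\Lambda_i^{-\ell_q(Q)}$ analytically as $\exp(-\ell_q(Q) \log \Lambda_i)$ and applying Lemma \ref{qde:prop_ell_q_is_a_q_log}, I get $\qdeop{Q} \Lambda_i^{-\ell_q(Q)} = \Lambda_i^{-\ell_q(Q) - 1} = \Lambda_i^{-1} \Lambda_i^{-\ell_q(Q)}$. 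Third, I need the identity $P^{-1} \eta_i = \Lambda_i^{-1} \eta_i$ in $K_{T^{N+1}}(\mathbb{P}^N)$. This comes from multiplying the defining expression of $\eta_i$ by $(1 - \Lambda_i P^{-1})$: the resulting numerator is the full product $\prod_{j=0}^N (1 - \Lambda_j P^{-1})$, which is exactly the relation defining the $K$-theory ring. Hence $(1 - \Lambda_i P^{-1}) \eta_i = 0$, equivalently $P^{-1} \eta_i = \Lambda_i^{-1} \eta_i$. Combining the three yields
\[
    \qdeop{Q} P^{-\ell_q(Q)}
    = \sum_{i=0}^N \Lambda_i^{-1} \Lambda_i^{-\ell_q(Q)} \eta_i
    = \sum_{i=0}^N \Lambda_i^{-\ell_q(Q)} P^{-1} \eta_i
    = P^{-1} P^{-\ell_q(Q)},
\]
which is the desired equation.

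There is no real obstacle here beyond checking the diagonalization of multiplication by $P^{-1}$ in the $\eta_i$-basis, which is a standard computation using the ring-theoretic presentation of $K_{T^{N+1}}(\mathbb{P}^N)$ recalled just above Definition \ref{qkqde:def_J_fn_eq}. The only subtle point to emphasize is that the notation $\Lambda_i^{-\ell_q(Q)}$ must be read as an honest meromorphic function of $Q$ obtained by fixing a numerical value of $\Lambda_i \in \mathbb{C}^*$ (and a branch of $\log$), so that the $q$-difference operator $\qdeop{Q}$ acts on it through the translation $\ell_q(qQ) = \ell_q(Q) + 1$.
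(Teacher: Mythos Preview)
Your proof is correct. The paper states this proposition without proof, so there is nothing to compare against; your argument is exactly the natural one, using Lemma \ref{qde:prop_ell_q_is_a_q_log} together with the diagonalization $P^{-1}\eta_i = \Lambda_i^{-1}\eta_i$ coming from the relation $(1-\Lambda_0 P^{-1})\cdots(1-\Lambda_N P^{-1})=0$ in $K_{T^{N+1}}(\mathbb{P}^N)$.
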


Functions that satisfy such $q$-difference equations are called \textit{$q$-characters}.
Recall that Jacobi's theta function $\theta_q$, by Proposition \ref{qde:prop_theta_qde}, is a solution of the $q$-difference equation $\qdeop{Q} \theta_q(Q) = Q^{-1} \theta_q(Q)$.
A common example of a $q$-character is the following function:

\begin{defin}[\cite{Sauloy_qde_regsing}, Subsection 1.1.2]
    Let $\lambda_q \in \mathbb{C}^*$. The corresponding \textit{$q$-character} is the function $e_{q, \lambda_q}$ defined by
    \[
        e_{q,\lambda_q}(Q) = \frac{\theta_q(Q)}{\theta_q(\lambda_q Q)} \in \mathcal{M} \left( \mathbb{C}^* \right).
    \]
\end{defin}

\begin{prop}[\cite{Sauloy_qde_regsing}, Subsection 1.1.2]
    Let $\lambda_q \in \mathbb{C}^*$.
    The function $e_{q,\lambda_q}$ is a solution of the $q$-difference equation
    \[
        \qdeop{Q} e_{q,\lambda_q}(Q) = \lambda_q e_{q,\lambda_q}(Q)
    \]
\end{prop}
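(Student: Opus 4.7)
The plan is to prove this by direct computation, leveraging the functional equation for Jacobi's theta function from Proposition \ref{qde:prop_theta_qde}, namely $\qdeop{Q} \theta_q(Q) = Q^{-1} \theta_q(Q)$, which when unwrapped reads $\theta_q(qQ) = Q^{-1} \theta_q(Q)$.

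First I would observe that the $q$-difference operator $\qdeop{Q}$ acts as the shift $f(Q) \mapsto f(qQ)$, and in particular is a ring homomorphism, so it commutes with taking ratios of meromorphic functions. Applying it to the definition $e_{q,\lambda_q}(Q) = \theta_q(Q)/\theta_q(\lambda_q Q)$ gives
\[
    \qdeop{Q} e_{q,\lambda_q}(Q) = \frac{\theta_q(qQ)}{\theta_q(q \lambda_q Q)}.
\]

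Next I would evaluate each factor using the functional equation of $\theta_q$. The numerator is directly $\theta_q(qQ) = Q^{-1} \theta_q(Q)$. For the denominator I would apply the same functional equation with the argument $\lambda_q Q$ in place of $Q$; this is legitimate since $\theta_q(qU) = U^{-1} \theta_q(U)$ holds as an identity in the variable $U$. Setting $U = \lambda_q Q$ gives $\theta_q(q \lambda_q Q) = (\lambda_q Q)^{-1} \theta_q(\lambda_q Q)$. Substituting both and simplifying the prefactor yields
\[
    \qdeop{Q} e_{q,\lambda_q}(Q) = \frac{Q^{-1} \theta_q(Q)}{(\lambda_q Q)^{-1} \theta_q(\lambda_q Q)} = \lambda_q \cdot \frac{\theta_q(Q)}{\theta_q(\lambda_q Q)} = \lambda_q\, e_{q,\lambda_q}(Q),
\]
which is the desired identity.

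There is essentially no obstacle here: the entire argument is a two-line manipulation once the functional equation for $\theta_q$ is in hand. The only point deserving a brief mention is the verification that $e_{q,\lambda_q}$ is well-defined as a meromorphic function on $\mathbb{C}^*$, which follows from the fact that $\theta_q$ is an entire function with zeros only along the $q$-spiral $(-1)q^{\mathbb{Z}}$, so the denominator $\theta_q(\lambda_q Q)$ is not identically zero and the ratio makes sense in $\mathcal{M}(\mathbb{C}^*)$.
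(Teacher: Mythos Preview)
Your proof is correct and is the standard direct verification. The paper does not actually supply its own proof of this proposition; it merely states the result with a citation to \cite{Sauloy_qde_regsing}, so there is nothing further to compare.
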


\begin{remark}
    For the equivariant $K$-theoretical $J$-function, instead of using the function $P^{-\ell_q(Q)}$, it would have been possible to introduce the function $e_{q,P^{-1}}$ defined by
    \[
        e_{q,P^{-1}}(Q)
        =
        \sum_{i=0}^N
        e_{q,\Lambda_i^{-1}}(Q)
        \eta_i
    \]
    We chose the former to have a better basis decomposition when considering the non equivariant limit $\Lambda_i \to 1$.
    Indeed, the non equivariant $K$-theory of $\mathbb{P}^N$ is given by
    $K\left( \mathbb{P}^N \right) \simeq \mathbb{Z}[P,P^{-1}] \left/\left( \left(1-P^{-1} \right)^{N+1} \right) \right.$.
    Let us write
    \[
        \binom{\ell_q(Q)}{k} = \frac{1}{k!} \prod_{r=0}^{k-1} (\ell_q(Q)-r)
    \]
    The function $P^{-\ell_q(Q)}$ has the decomposition in the previous basis of the non equivariant $K$-theory
    \[
        P^{-\ell_q(Q)}
        =
        \left( 1 - \left(1-P^{-1}\right) \right)^{\ell_q(Q)}
        =
        \sum_{k \geq 0} (-1)^k \binom{\ell_q(Q)}{k} \left( 1 - P^{-1} \right)^k.
    \]
    Let us point out that the family $\left(1,\ell_q(Q),\dots,\ell_q(Q)^N\right)$ is linearly independent over the field of $q$-constants $\mathcal{M}\left(\mathbb{E}_q\right)$, see \cite{roquefeuil_thesis}, Lemma VI.1.1.10.
    This function has to be compared with the infinite product below, whose decomposition in our basis of the non equivariant $K$-theory is much more technical,
    \[
        e_{q, P^{-1}}(Q) := \theta_q(Q) \theta_q(P^{-1} Q)^{-1}.
    \]
    Therefore, when defining the $J$-function, we decided to use the function $P^{-\ell_q(Q)}$ instead of the usual $q$-character $e_{q, P^{-1}}(Q)$.
\end{remark}

\subsection{Confluence of the $J$-function}\label{qkqde:subsection_main_theorem_equivariant}

We begin by making a remark on the equivariant parameters to justify a relation that will appear in our statement of the confluence of the $K$-theoretical $J$-function.

\begin{remark}
    Recall that we have $z = -c_1(q) \in H_{\mathbb{C}^*}^*(\textnormal{pt})$ and $\lambda_i = c_1(\Lambda_i) \in H_{\mathbb{T}^{N+1}}^*(\textnormal{pt})$.
    The morphism $f : T^{N+1} \to \mathbb{C}^*$ given by $f(w_0, \dots, w_N) = w_0 \cdots w_N$ induces morphisms $f_{K\textnormal{th}} : K_{\mathbb{C}^*}(\textnormal{pt}) \to K_{T^{N+1}}(\textnormal{pt})$ and $f_\textnormal{coh} : H^*_{\mathbb{C}^*}(\textnormal{pt}) \to H^*_{T^{N+1}}(\textnormal{pt})$.
    We have the relation in the equivariant cohomology $H_{T^{N+1}}^* \left( \mathbb{P}^N \right)$, up to degree 2 terms
    \[
        \textnormal{ch}(\Lambda_i) = \textnormal{ch}( f_{K\textnormal{th}}(q) )^{-\frac{\lambda_i}{f_\textnormal{coh}(z)}}.
    \]
\end{remark}

\subsubsection*{Statement.}

\begin{thm}\label{qkqde:thm_confluence_jk_eq}
    Consider the algebraic torus $T^{N+1} = \left(\mathbb{C}^*\right)^{N+1}$ acting on $X = \mathbb{P}^N$.
    Recall that $(\ref{qkqde:eqn_JK_eq})$ (resp. $(\ref{qkqde:eqn_pde_jh_eq})$) denotes the $q$-difference (resp. differential) equation satisfied by Givental's small equivariant $K$-theoretical (resp. cohomological) $J$-function $J^{K\textnormal{th},\textnormal{eq}}$ (resp. $J^{\textnormal{coh,eq}}$).
    Assume that the relation $\Lambda_i = q^\frac{-\lambda_i}{z} \in \mathbb{C}$ holds for all $i \in \{0, \dots, N\}$, and that for $i \neq j$, $\lambda_i-\lambda_j \notin \mathbb{Z}$.
    Let $q \in \mathbb{C}, 0<|q|<1$ and $z \in \mathbb{C}^*$.
    The following statements hold:
    \begin{enumerate}[label=(\roman*)]
        \item Consider the map $\varphi_{q,z}$ defined by
        \[
            \functiondesc{\varphi_{q,z}}{\mathbb{C}}{\mathbb{C}}{Q}{\left( \frac{z}{1-q} \right)^{N+1} Q.}
        \]
        Then, the pullback by $\varphi_{q,z}$ of the $q$-difference equation \eqref{qkqde:eqn_JK_eq} is a confluent $q$-difference equation.
        Moreover, its formal limit when $q \to 1$ is the differential equation \eqref{qkqde:eqn_pde_jh_eq} satisfied by the cohomological $J$-function.
        
        \item Let $
            \mathbb{E}_q
        $
        be the complex torus 
        $
            \mathbb{C}^* / q^\mathbb{Z}
        $
        and
        $\mathcal{M} \left( \mathbb{E}_q \right)$
        be the space of meromorphic functions on said complex torus. Consider the isomorphism of rings
        $
            \gamma_\textnormal{eq} : 
            K_{T^{N+1}} \left( \mathbb{P}^N \right) \otimes \mathbb{C} 
            \to 
            H_{T^{N+1}}^*\left( \mathbb{P}^N, \mathbb{C} \right)
        $
        defined by, for all $i \in \{0, \dots, N\}$
        \[
            \gamma_\textnormal{eq}
            \left(
                \prod_{j \neq i} \frac{1 - \Lambda_i P^{-1}}{1 - \Lambda_i \Lambda_j^{-1}}
            \right)
            =
            \prod_{j \neq i} \frac{H-\lambda_i}{\lambda_j-\lambda_i}.
        \]
        Then, there exists a change of fundamental solution
        $
            P_{q,z}^\textnormal{eq} \in \textnormal{GL}_{N+1} \left(
                \mathcal{M} \left( \mathbb{E}_q \right)
            \right),
        $
        such that the fundamental solution $J^{K\textnormal{th},\textnormal{eq}}$ verifies
        \[
            \gamma_\textnormal{eq} \left(
                \lim_{t \to 0}
                P_{q^t,z}^\textnormal{eq} \cdot \left(
                    \varphi^*_{q^t,z}
                    J^{K\textnormal{th},\textnormal{eq}}\left(q^t,Q\right)
                \right)
            \right)
            =
            J^{\textnormal{coh,eq}}(z,Q).
        \]
    \end{enumerate}
\end{thm}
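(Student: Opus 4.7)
The plan is to verify both assertions via a rescaling that makes the $q\to 1$ limit transparent. The common setup is to introduce the rescaled factor $E_i := z(1-\Lambda_i\qdeop{Q})/(1-q)$, so that $1-\Lambda_i\qdeop{Q} = ((1-q)/z)E_i$ and the scalar equation $(\ref{qkqde:eqn_JK_eq})$ reads
\[
    ((1-q)/z)^{N+1}\prod_{i=0}^N E_i \cdot J^{K\textnormal{th,eq}} = Q\, J^{K\textnormal{th,eq}}.
\]
The purpose of the pullback $\varphi_{q,z}$ is precisely to cancel the overall factor $((1-q)/z)^{N+1}$ by substituting $Q \mapsto ((1-q)/z)^{N+1}Q$ on the right, so that the pulled-back equation reads $\prod_i E_i \cdot \tilde J = Q\tilde J$.

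For part (i), I would write $E_i = z(1-\Lambda_i)/(1-q) + z\Lambda_i\delta_q$ with $\delta_q := (\qdeop{Q}-\textnormal{Id})/(q-1)$. Under $\Lambda_i = q^{-\lambda_i/z}$ a L'Hôpital computation gives $z(1-\Lambda_i)/(1-q)\to -\lambda_i$, while $\delta_q \to Q\partial_Q$ formally, whence $E_i \to -\lambda_i + zQ\partial_Q$ and the formal limit of $\prod_i E_i\cdot\tilde J = Q\tilde J$ is exactly $(\ref{qkqde:eqn_pde_jh_eq})$. To apply Definition \ref{qde:def_confluentsys}, I would pass to a rank $N+1$ companion system; the four conditions then reduce to pairwise distinctness of the $\lambda_i$ (yielding non-resonance and a trivial Jordan limit), together with the fact that the only singular locus of the coefficient matrix is $Q=0$, making the spiral and pole conditions vacuous.

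For part (ii), I work in the torus-fixed basis $\eta_i = \prod_{j\neq i}(1-\Lambda_j P^{-1})/(1-\Lambda_j \Lambda_i^{-1})$, in which $P^{-1}$ acts by the scalar $\Lambda_i^{-1}$. The $i$-th fixed-point component of the pullback reads
\[
    (\varphi_{q,z}^* J^{K\textnormal{th,eq}})|_i = \Lambda_i^{-\ell_q(\varphi_{q,z}^{-1}(Q))} \sum_{d\geq 0} \frac{((1-q)/z)^{(N+1)d}\, Q^d}{\prod_{j=0}^N \prod_{r=1}^d (1-q^r \Lambda_j\Lambda_i^{-1})}.
\]
The rescaling of the sum is calibrated so that $z(1-q^{r+(\lambda_i-\lambda_j)/z})/(1-q) \to rz + \lambda_i-\lambda_j$; hence the sum converges termwise to $\sum_d Q^d / \prod_{j,r}(rz+\lambda_i-\lambda_j)$, matching the $i$-th fixed-point component of $J^{\textnormal{coh,eq}}/Q^{\lambda_i/z}$. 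The divergent prefactor $\Lambda_i^{-\ell_q(\varphi_{q,z}^{-1}(Q))}$ must be replaced by $\Lambda_i^{-\ell_q(Q)}$, which converges to $Q^{\lambda_i/z}$ via Proposition \ref{qde:prop_specfns_limits} since $\Lambda_i^{-\ell_q(Q)} = \exp((\lambda_i/z)\log q\cdot \ell_q(Q))$ and $\log q \cdot \ell_q(Q) \to \log Q$. Accordingly, $P^{\textnormal{eq}}_{q,z}$ would be taken diagonal in the $\eta_i$-basis, with diagonal entries being the unique $\qdeop{Q}$-periodic ratios that effect this replacement; both $\Lambda_i^{-\ell_q(Q)}$ and $\Lambda_i^{-\ell_q(\varphi_{q,z}^{-1}(Q))}$ solve the same rank one $q$-difference equation $\qdeop{Q}f = \Lambda_i^{-1}f$, so their ratio is automatically $\qdeop{Q}$-invariant. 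Finally $\gamma_\textnormal{eq}(\eta_i) = \prod_{j\neq i}(H-\lambda_j)/(\lambda_i-\lambda_j)$ reassembles the limit into $J^{\textnormal{coh,eq}}$.

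The main obstacle is constructing $P^{\textnormal{eq}}_{q,z}$ as an honest element of $\textnormal{GL}_{N+1}(\mathcal{M}(\mathbb{E}_q))$: the naive formula $\Lambda_i^{\ell_q(\varphi_{q,z}^{-1}(Q))-\ell_q(Q)}$, though $\qdeop{Q}$-invariant, is an exponential of a meromorphic function and therefore picks up essential singularities at the zeros of the theta functions involved. The remedy is to express the same ratio through the functions $\theta_q$ rather than through exponentials of $\ell_q$, landing genuinely in $\mathcal{M}(\mathbb{E}_q)$. Once $P^{\textnormal{eq}}_{q,z}$ is in place, the convergence of the gauged solution follows either by termwise limits in the $Q$-expansion or, more conceptually, from Corollary \ref{qde:coro_confluence_for_fund_sols} applied with $J^{K\textnormal{th,eq}}$ realised as one column of the fundamental solution of Theorem \ref{qde:thm_qde_frob}.
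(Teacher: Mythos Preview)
Your argument is correct and follows the same route as the paper: rewrite the operator via $\delta_q$, pull back by $\varphi_{q,z}$ to clear the $(1-q)^{N+1}$ factor, take the formal limit using $\Lambda_i=q^{-\lambda_i/z}$; then localise at the fixed points $P=\Lambda_i$, check the termwise limit of the hypergeometric sum, and repair the $q$-character prefactor by a $q$-constant gauge. Your factored presentation through $E_i = z(1-\Lambda_i)/(1-q) + z\Lambda_i\delta_q$ is cleaner than the paper's fully expanded symmetric-function formula but amounts to the same computation.

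One point where you are actually more careful than the paper: you flag that the naive diagonal entries $\Lambda_i^{\ell_q(\varphi_{q,z}^{-1}(Q))-\ell_q(Q)}$, while $q$-invariant, acquire essential singularities along the theta divisor and so do not lie in $\mathcal{M}(\mathbb{E}_q)$. The paper's proof of Proposition~\ref{qkqde:prop_jk_eq_confl_sol} asserts existence of $P_{q,z}^{\textnormal{eq}}\in\textnormal{GL}_{N+1}(\mathcal{M}(\mathbb{E}_q))$ from the mere fact that the two $q$-logarithms differ by a $q$-constant, without addressing this regularity issue; your proposed remedy of realising the same gauge through ratios of $\theta_q$ (i.e.\ through genuine $e_{q,\lambda}$-type characters) is the correct way to close that gap.
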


The proof of this theorem consists of three computations: we begin by studying the confluence of the $q$-difference equation, then of the solution. Then, we compare the limit of the solution to the cohomological $J$-function.
After these three computations, we will give a proof of this theorem.

\subsubsection*{Confluence of the $q$-difference equation.}\label{qkqde:ssection_confluence_qde}

\begin{prop}\label{qkqde:prop_confluence_equation_JK_equivariant}
    Consider the $q$-difference equation (\ref{qkqde:eqn_JK_eq}) satisfied by the $K$-theoretical $J$-function:
    \[
        \left[ \left( 1 - \Lambda_0 \qdeop{Q} \right) \cdots \left( 1 - \Lambda_N \qdeop{Q} \right) - Q \right]  J^{K\textnormal{th}}(q,Q) = 0.
    \]
    Let $\varphi_{q,z}$ be the map
    \[
        \functiondesc{\varphi_{q,z}}{\mathbb{C}}{\mathbb{C}}{Q}{\left( \frac{z}{1-q} \right)^{N+1} Q.}
    \]
    Then, the $q$-pullback of the $q$-difference equation (\ref{qkqde:eqn_JK_eq}) by the isomorphism $\varphi_{q,z}$ is confluent, and its formal limit is the differential equation satisfied by the small equivariant cohomological $J$-function (\ref{qkqde:eqn_pde_jh_eq}). 
\end{prop}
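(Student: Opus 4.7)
The plan is to recast the pulled-back scalar equation as a first-order linear $q$-difference system whose coefficient matrix is manifestly close to the companion form of the cohomological equation, then verify the four conditions of Definition~\ref{qde:def_confluentsys} and read off the formal limit directly.

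First, since $\qdeop{Q}$ commutes with homotheties in $Q$, only the term $-Q$ is affected by the pullback, and $\varphi_{q,z}^{\ast}(\ref{qkqde:eqn_JK_eq})$ becomes $\prod_{i=0}^N (1 - \Lambda_i \qdeop{Q})\, J = \bigl(\tfrac{1-q}{z}\bigr)^{N+1} Q\, J$. I then introduce the rescaled operators $D^{(i)}_{q,z} = \tfrac{z}{1-q}(1 - \Lambda_i \qdeop{Q})$ and set $Y_0 = J$, $Y_{i+1} = D^{(i)}_{q,z} Y_i$ for $i = 0, \ldots, N-1$, so that the scalar equation is equivalent to $D^{(N)}_{q,z} Y_N = Q Y_0$. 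Solving each of these $N+1$ relations for $\qdeop{Q} Y_i$ yields a first-order system $\qdeop{Q} Y = A_q(Q) Y$ with $A_q(Q) = \operatorname{diag}(\Lambda_i^{-1}) - \tfrac{1-q}{z}\, \operatorname{diag}(\Lambda_i^{-1})\, C(Q)$, where $C(Q)$ is the cyclic matrix with $1$'s on the superdiagonal and $Q$ in the $(N,0)$ entry. Since $A_q$ is polynomial in $Q$, $B_q = (A_q - \mathrm{Id})/(q-1)$ has no $Q$-poles and condition (i) of Definition~\ref{qde:def_confluentsys} is vacuous.

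Second, conditions (ii) and (iii) reduce to the elementary limit $\tfrac{q^u - 1}{q - 1} \to u$. Under the hypothesis $\Lambda_i = q^{-\lambda_i/z}$, the diagonal entries $\tfrac{\Lambda_i^{-1} - 1}{q - 1}$ of $B_q$ converge to $\lambda_i/z$ uniformly on compacts, giving $\widetilde{B}(Q) = \tfrac{1}{z}(\operatorname{diag}(\lambda_i) + C(Q))$, a polynomial matrix whose only singularities on the Riemann sphere are $0$ and $\infty$. The delicate condition (iv) requires a genericity assumption: under $(\lambda_j - \lambda_i)/z \notin \mathbb{Z}$ for $i \neq j$ (which is precisely the non-resonance of the $q$-difference system, since the eigenvalues of $A_q(0)$ are the $\Lambda_i^{-1}$), both $B_q(0)$ and $\widetilde{B}(0)$ are upper triangular with pairwise distinct diagonal entries; the eigenvector matrices can be written down explicitly as upper triangular matrices whose entries are rational in these diagonal entries and the superdiagonal coefficients $\tfrac{1}{z}\Lambda_i^{-1}$, and $P_q \to \widetilde{P}$ is then immediate from continuity.

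Third, identifying the formal limit system with (\ref{qkqde:eqn_pde_jh_eq}) is a direct check: unpacking $Q\partial_Q Y = \widetilde{B}(Q) Y$ gives $Y_{i+1} = (zQ\partial_Q - \lambda_i) Y_i$ for $i < N$ and $(zQ\partial_Q - \lambda_N) Y_N = Q Y_0$; eliminating $Y_1, \ldots, Y_N$ in favour of $Y_0 = J$ yields $\prod_{i=0}^{N}(-\lambda_i + zQ\partial_Q) J = Q J$, which is exactly (\ref{qkqde:eqn_pde_jh_eq}). The main obstacle is the choice of auxiliary vector $Y$ in the first step: the naive companion form built from $(J, \qdeop{Q} J, \ldots, \qdeop{Q}^N J)$ gives a matrix $A_q$ whose limit is the identity, so $(A_q - \mathrm{Id})/(q-1)$ acquires a delicate nilpotent structure and the identification with the cohomological companion is far from immediate; the choice above, built from the rescaled operators $D^{(i)}_{q,z}$, is designed precisely so that the limit system is already the companion system of (\ref{qkqde:eqn_pde_jh_eq}), after which everything reduces to the single limit $\tfrac{q^u - 1}{q - 1} \to u$.
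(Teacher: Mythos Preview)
Your proof is correct and takes a genuinely different route from the paper's. The paper works directly on the scalar operator: it substitutes $\qdeop{Q} = \mathrm{Id} + (q-1)\delta_q$ into the product $\prod_i(1-\Lambda_i\qdeop{Q})$, expands, observes that the resulting expression carries a global factor $(1-q)^{N+1}$ which is absorbed by the pullback, and then reads off the formal limit coefficient by coefficient using $\Lambda_i = q^{-\lambda_i/z}$. The verification that the associated system satisfies the four conditions of Definition~\ref{qde:def_confluentsys} is essentially left implicit. Your approach is instead to pass to a first-order system from the outset, but with a non-standard companion vector $(Y_0,\ldots,Y_N)$ built from the rescaled operators $D^{(i)}_{q,z}$ rather than from powers of $\qdeop{Q}$; this choice forces the matrix $A_q(Q)$ to be linear in $Q$ and already in the companion form of the target differential equation, so that all four conditions of Definition~\ref{qde:def_confluentsys} become transparent and the identification with~(\ref{qkqde:eqn_pde_jh_eq}) is immediate. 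What your approach buys is a fully rigorous check of confluence in the sense of Definition~\ref{qde:def_confluentsys} (including the Jordan condition~(iv), which you correctly flag as requiring the non-resonance hypothesis $(\lambda_i-\lambda_j)/z\notin\mathbb{Z}$, implicit in the paper through the well-definedness of the basis $\eta_i$); what the paper's approach buys is brevity and an explicit expanded form of the pulled-back scalar equation, which is reused verbatim in the non-equivariant case.
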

 
\begin{proof}
    Denote by $\delta_q$ the $q$-difference operator $\frac{\qdeop{Q}-\textnormal{Id}}{q-1}$.
    We rewrite the $q$-difference equation (\ref{qkqde:eqn_JK_eq}) to express it with the operators $\delta_q$ instead.
    Using \[\qdeop{Q}=\textnormal{Id}+(q-1)\delta_q,\] we obtain that
    $
    \Delta(q,Q,\delta_q)
    J^{K\textnormal{th},\textnormal{eq}}\left(q^t,Q\right)
    =
    0,
    $
    where $\Delta(q,Q,\delta_q)$ is the $q$-difference operator given by
    \begin{align*}
        &\Delta(q,Q,\delta_q)
        \\
        &=
        \left[
            -Q +
            (1-q)^{N+1} \sum_{i=0}^{N+1}
            \delta_q^i (-1)^i
            \sum_{0 \leq j_1 < \cdots < j_i < N}
            \Lambda_{j_1} \cdots \Lambda_{j_i}
            \prod_{k \in \{0, \dots, N\} - \{j_1, \cdots, j_i\}}
            \frac{1-\Lambda_k}{1-q}
        \right].
    \end{align*}
    As it is written, the formal limit when $q \to 1$ of this operator is given by $-Q$ and thus does not define a differential equation.
    Introduce the $q$-pullback
    \[
        \functiondesc{\varphi_{q,z}}{\mathbb{C}}{\mathbb{C}}{Q}{\left( \frac{z}{1-q} \right)^{N+1} Q.}
    \]
    The $q$-pullback by $\varphi_{q,z}$ of the above $q$-difference equation is given by
    \begin{equation}\label{qkqde:eqn_pullback_JK_eq}
        \left[
        -Q +
        z^{N+1}\sum_{i=0}^{N+1}
        \delta_q^i (-1)^i
        \sum_{0 \leq j_1 < \cdots < j_i < N}
        \!
        \Lambda_{j_1} \cdots \Lambda_{j_i}
        \prod_{k \in \{0, \dots, N\} - \{j_1, \cdots, j_i\}}
        \frac{1-\Lambda_k}{1-q}
        \right]
        \!
        f(q,Q)
        = 0.
    \end{equation}
    Since the relation $\Lambda_i=q^\frac{-\lambda_i}{z}$ holds for all $i \in \{0, \dots, N\}$, this $q$-difference equation is confluent.
    Using the same relation again, we can compute its formal limit when $q \to 1$.
    The resulting formal limit coincides with the developed expression of the differential equation (\ref{qkqde:eqn_pde_jh_eq}) satisfied by the cohomological $J$-function.
\end{proof}

\begin{remark}
    The $q$-pullback $\varphi_{q,z}$ defined in Proposition \ref{qkqde:prop_confluence_equation_JK_equivariant} is the only $q$-pullback of the form $Q \mapsto \left( \frac{z}{1-q} \right)^\lambda Q$, with $\lambda \in \mathbb{Z}$, which defines a confluent $q$-difference system whose formal limit is non zero.
\end{remark}
 
\subsubsection*{Confluence of the solution.}\label{qkqde:ssection_confluence_sol}

    The $q$-difference system associated to the $q$-pullbacked equation (\ref{qkqde:eqn_pullback_JK_eq}) has a fundamental solution obtained from the $J$-function $J^{K\textnormal{th,eq}}(q,Q)$, which is explicitly given by 
    \begin{equation}\label{qkqde:eqn_fundamental_solution_before_confluence_equivariant}
        \mathcal{X}^{K\textnormal{th,eq}}\left(q,Q\right)
        =
        \begin{pmatrix}
            J^{K\textnormal{th},\textnormal{eq}}_{|P=\Lambda_0}\left(q,\left( \frac{1-q}{z} \right)^{N+1} Q\right) &   \cdots  &   J^{K\textnormal{th},\textnormal{eq}}_{|P=\Lambda_N}\left(q,\left( \frac{1-q}{z} \right)^{N+1} Q\right)    \\
            \vdots                                                                                      &   \ddots  &   \vdots                                                      \\
            \delta_q^N J^{K\textnormal{th},\textnormal{eq}}_{|P=\Lambda_0}\left(q,\left( \frac{1-q}{z} \right)^{N+1} Q\right) &   \cdots  &  \delta_q^N  J^{K\textnormal{th},\textnormal{eq}}_{|P=\Lambda_N}\left(q,\left( \frac{1-q}{z} \right)^{N+1} Q\right)
        \end{pmatrix}.
    \end{equation}
    The condition $\Lambda_i \Lambda_j^{-1} \notin q^\mathbb{Z}$ for $i \neq j$ implies that this matrix is invertible.

\begin{prop}\label{qkqde:prop_jk_eq_confl_sol}
    There exists a change of fundamental solution, denoted by $P_{q,z}^\textnormal{eq} \in \textnormal{GL}_{N+1}\left( \mathcal{M}\left( \mathbb{E}_q \right) \right)$ such that the new fundamental solution $\mathcal{X}^{K\textnormal{th,eq}}\left(q,Q\right)P_{q,z}^\textnormal{eq}$ obtained from Equation $(\ref{qkqde:eqn_fundamental_solution_before_confluence_equivariant})$ is given by
    \[
        \left( 
            \mathcal{X}^{K\textnormal{th,eq}}\left(q,Q\right)P_{q,z}^\textnormal{eq} 
        \right)_{li}
        =
        \left( \delta_q \right)^l
        \Lambda_i^{-\ell_q(Q)}
        \sum_{d \geq 0} 
        \frac{1}{z^{d(N+1)}}
        \frac{(1-q)^{d(N+1)}Q^d}{\left(q\Lambda_0 \Lambda_i^{-1}, \dots ,q, \dots ,q\Lambda_N \Lambda_i^{-1} ;q \right)_d}
    \]
    Moreover, this fundamental solution is confluent.
\end{prop}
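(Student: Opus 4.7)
The plan is to construct $P_{q,z}^\textnormal{eq}$ as a diagonal gauge whose entries are $q$-invariant in $Q$, verify the claimed formula, then carry out the confluence estimates entrywise. First I observe that $\ell_q(aQ) - \ell_q(Q)$ is a $q$-constant function of $Q$ for any fixed scalar $a$, since by Lemma \ref{qde:prop_ell_q_is_a_q_log} both terms satisfy $\qdeop{Q} f = f+1$, so their difference is annihilated by $\qdeop{Q} - \textnormal{Id}$. Setting $a := ((1-q)/z)^{N+1}$ and $\rho_{q,z}(Q) := \ell_q(aQ) - \ell_q(Q) \in \mathcal{M}(\mathbb{E}_q)$, and using that $\eta_j|_{P=\Lambda_i} = \delta_{ij}$, the specialisation of the $K$-theoretical $J$-function at the pulled-back input factors as
\[
J^{K\textnormal{th},\textnormal{eq}}_{|P=\Lambda_i}\bigl(q,\varphi_{q,z}^{-1}(Q)\bigr) = \Lambda_i^{-\rho_{q,z}(Q)}\,\Lambda_i^{-\ell_q(Q)}\sum_{d\geq 0}\frac{(1-q)^{d(N+1)}\,Q^d / z^{d(N+1)}}{(q\Lambda_0\Lambda_i^{-1},\dots,q,\dots,q\Lambda_N\Lambda_i^{-1};q)_d}.
\]
I therefore take $P_{q,z}^\textnormal{eq} := \text{diag}\bigl(\Lambda_0^{\rho_{q,z}(Q)},\dots,\Lambda_N^{\rho_{q,z}(Q)}\bigr)$, which lies in $\textnormal{GL}_{N+1}(\mathcal{M}(\mathbb{E}_q))$ since each entry is $q$-invariant in $Q$. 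Right-multiplication by $P_{q,z}^\textnormal{eq}$ cancels the $\Lambda_i^{-\rho_{q,z}(Q)}$ prefactor in the first row, and because $\delta_q$ commutes with multiplication by $q$-constants, the $l$-th row is obtained by applying $\delta_q^l$ to the transformed first row, matching exactly the stated formula.

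For the confluence, I specialise $q = q_0^t$ with $|q_0|<1$ and pass to the limit $t\to 0^+$ entrywise. By Proposition \ref{qde:prop_specfns_limits}, $(q-1)\ell_q(Q) \to \log Q$ uniformly on compacts avoiding $(-1)q_0^\mathbb{R}$, and under the hypothesis $\Lambda_i = q^{-\lambda_i/z}$ one obtains
\[
\Lambda_i^{-\ell_q(Q)} \;=\; q^{\lambda_i \ell_q(Q)/z}\;\longrightarrow\; Q^{\lambda_i/z}.
\]
For each fixed $d$, the ratio $(q\Lambda_j\Lambda_i^{-1};q)_d/(1-q)^d$ converges to $\prod_{r=1}^d (r + (\lambda_i-\lambda_j)/z)$ (reducing to $d!$ at $j=i$), so after multiplication by the prefactor $(1-q)^{d(N+1)}/z^{d(N+1)}$, the coefficient of $Q^d$ converges to $1/\prod_{j=0}^{N}\prod_{r=1}^{d}(rz+\lambda_i-\lambda_j)$. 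Combined, the first row limits to $Q^{\lambda_i/z}\sum_{d\geq 0}Q^d / \prod_{j,r}(rz+\lambda_i-\lambda_j)$, which is precisely the image under $\gamma_\textnormal{eq}^{-1}$ of $J^{\textnormal{coh,eq}}$ specialised at $H = \lambda_i$. Since $\delta_q \to Q\partial_Q$ on holomorphic families uniformly on compacts, the $l$-th row converges to $(Q\partial_Q)^l$ applied to the first-row limit.

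The main obstacle is to conclude that the entrywise limit is actually a \emph{fundamental} (invertible) solution of (\ref{qkqde:eqn_pde_jh_eq}). The cleanest route is to compare with Corollary \ref{qde:coro_confluence_for_fund_sols}: the Frobenius-type fundamental solution of the confluent system $\varphi_{q,z}^{*}(\ref{qkqde:eqn_JK_eq})$ provided by Theorem \ref{qde:thm_qde_frob} is automatically confluent, and our gauged solution $\mathcal{X}^{K\textnormal{th},\textnormal{eq}} P_{q,z}^\textnormal{eq}$ differs from it by a factor in $\textnormal{GL}_{N+1}(\mathcal{M}(\mathbb{E}_q))$ whose $t\to 0$ limit is invertible; composition of two confluent invertible families remains confluent. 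Alternatively, invertibility can be read off directly from the Frobenius leading behaviour of the columns, $Q^{\lambda_i/z}(1+O(Q))$, using that the exponents $\lambda_i/z$ are pairwise non-congruent modulo $\mathbb{Z}$ by the non-resonance assumption. Either route upgrades the pointwise convergence to the conclusion that the limit is a fundamental solution of the limit differential equation, finishing the proof.
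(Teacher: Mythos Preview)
Your argument is correct and follows essentially the same route as the paper: both proofs hinge on the observation that $\ell_q\bigl(((1-q)/z)^{N+1}Q\bigr)$ and $\ell_q(Q)$ differ by a $q$-constant, so a diagonal gauge in $\textnormal{GL}_{N+1}(\mathcal{M}(\mathbb{E}_q))$ swaps one for the other, after which the termwise limits $\frac{1-q}{1-q^r\Lambda_j\Lambda_i^{-1}}\to \frac{z}{rz+\lambda_i-\lambda_j}$ and $\Lambda_i^{-\ell_q(Q)}\to Q^{\lambda_i/z}$ (via Proposition~\ref{qde:prop_specfns_limits}) yield the desired confluence. Your version is in fact more explicit than the paper's: you write down $P_{q,z}^\textnormal{eq}=\textnormal{diag}(\Lambda_i^{\rho_{q,z}(Q)})$ concretely, and you address the invertibility of the limiting matrix (needed for ``confluent'' in the sense of Definition~\ref{qde:def_confluent_solution}), which the paper's proof leaves implicit.
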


\begin{proof}
    We begin by trying to compute the limit of the fundamental solution $\mathcal{X}^{K\textnormal{th,eq}}\left(q,Q\right)$
    when $q$ tends to 1.
    Let $i \in \{0, \dots, N\}$. We have
    \begin{align*}
        &J^{K\textnormal{th},\textnormal{eq}}_{|P=\Lambda_i}\left(q,\left( \frac{1-q}{z} \right)^{N+1} Q\right)
        \\
        &= \Lambda_i^{-\ell_q \left( \left(\frac{q-1}{z}\right)^{N+1} Q \right)}
        \sum_{d \geq 0} \frac{(1-q)^{d(N+1)}}{z^{d(N+1)}(q \Lambda_0 \Lambda_i^{-1};q)_d \cdots (q \Lambda_N \Lambda_i^{-1};q)_d)} Q^d.
    \end{align*}
    First let us check that every term in the sum indexed by $d$ has a well defined limit when $q$ tends to 1: the relation $\Lambda_i = q^{-\lambda_i/z}$ gives that for any $r \in \mathbb{Z}$,
    \[
        \lim_{q \to 1} \frac{1-q}{1-q^r\Lambda_j \Lambda_i^{-1}}
        =
        \frac{z}{r+\lambda_i-\lambda_j}.
    \]
    Therefore, we have
    \[
        \lim_{q \to 1} \frac{(1-q)^{d(N+1)}}{z^{d(N+1)}(\Lambda_0 \Lambda_i^{-1};q)_d \cdots (\Lambda_N \Lambda_i^{-1};q)_d} Q^d
        =
        Q^d \prod_{r=1}^d \prod_{j=0}^N \frac{1}{(\lambda_i-\lambda_j+rz)}.
    \]

    It remains to deal with the divergent coefficient
    $
        \Lambda_i^{-\ell_q \left( \left(\frac{q-1}{z}\right)^{N+1} Q \right)}.
    $
    Notice that the two functions given by $\ell_q(Q)$ and $\ell_q \left( \left(\frac{q-1}{z}\right)^{N+1} Q \right)$ are both $q$-logarithms, i.e. solutions of the $q$-difference equation $\qdeop{Q}f_q(Q) = f_q(Q)+1$.
    Therefore, there exists a change of fundamental solution
    $P_{q,z}^\textnormal{eq} \in  \textnormal{GL}_{N+1}\left( \mathcal{M}\left( \mathbb{E}_q \right) \right)$
    which allows us, in the formula of the fundamental solution $\mathcal{X}^{K\textnormal{th,eq}}\left(q,Q\right)$, to change the divergent $q$-logarithms $\ell_q \left( \left(\frac{q-1}{z}\right)^{N+1} Q \right)$ into the convergent $q$-logarithms $\ell_q(Q)$.
    Then, by Proposition \ref{qde:prop_specfns_limits},
    \[
        \lim_{t \to 0} \Lambda_i^{-\ell_{q^t}(Q)} 
        =
        \lim_{t \to 0}e^{\frac{\lambda_i}{z} \log(q^t) \ell_{q^t}(Q)}
        =
        Q^\frac{\lambda_i}{z}.
    \]
    Therefore, the transformed fundamental solution $\mathcal{X}^{K\textnormal{th,eq}}\left(q,Q\right)P_{q,z}^\textnormal{eq}$ is confluent, and its coefficients are given by
    \[
        \left( 
            \mathcal{X}^{K\textnormal{th,eq}}\left(q,Q\right)P_{q,z}^\textnormal{eq} 
        \right)_{li}
        =
        \left( \delta_q \right)^l
        \Lambda_i^{-\ell_q(Q)}
        \sum_{d \geq 0} 
        \frac{1}{z^{d(N+1)}}
        \frac{(1-q)^{d(N+1)}Q^d}{\left(q\Lambda_0 \Lambda_i^{-1}, \dots q, \dots ,q\Lambda_N \Lambda_i^{-1} ;q \right)_d}.
    \]
\end{proof}

\subsubsection*{Comparison between confluence of quantum $K$-theory and quantum cohomology.}\label{qkqde:ssubsection_proofconclusion}

Recall that we use a basis of the equivariant $K$-theory given by $\eta_i = \prod_{j \neq i} \frac{1 - \Lambda_j P^{-1}}{1 - \Lambda_j \Lambda_i^{-1}} \in K_{T^{N+1}} \left( \mathbb{P}^N \right)$.

\begin{defin}\label{qkqde:def_confluence_jk_equivariant}
    Denote by $P_{q,z}^\textnormal{eq} \cdot \varphi_{q,z}^* J^{K\textnormal{th,eq}}$ the $K$-theoretical function obtained from the first row of the transformed fundamental solution:
    \[
        P_{q,z}^\textnormal{eq} \cdot \varphi_{q,z}^* J^{K\textnormal{th,eq}}
        =
        \sum_{i=0}^N
        \left(\mathcal{X}^{K\textnormal{th,eq}}\left(q,Q\right)P_{q,z}^\textnormal{eq}\right)_{0i} \eta_i.
    \]
    By Proposition \ref{qkqde:prop_jk_eq_confl_sol}, the limit when $q^t$ tends to 1 of the function $P_{q,z}^\textnormal{eq} \cdot \varphi_{q,z}^* J^{K\textnormal{th,eq}}$ is well defined. We define the $K$-theoretical function $\textnormal{confluence}\left(J^{K\textnormal{th},\textnormal{eq}}\right)$ by
        \[
            \textnormal{confluence}\left(J^{K\textnormal{th},\textnormal{eq}}\right)(z,Q)
            =
            \lim_{t \to 0}
            P_{q^t,z}^\textnormal{eq} \cdot \varphi_{q^t,z}^* J^{K\textnormal{th,eq}}(q^t,Q).
        \]
\end{defin}

\begin{prop}\label{qkqde:prop_confluence_JK_equals_JH_equivariant}
    Consider the isomorphism of rings $\gamma_\textnormal{eq} : K_{T^{N+1}}\left( \mathbb{P}^N \right) \to H_{T^{N+1}}^* \left( \mathbb{P}^N ;\mathbb{Q} \right)$ given by $\gamma_\textnormal{eq}(\eta_i)=\prod_{j \neq i} \frac{H-\lambda_i}{\lambda_j-\lambda_i}$ for all $i \in \{0, \dots, N\}$
    Then,
    \[
        \gamma_\textnormal{eq}\left( \textnormal{confluence}\left(J^{K\textnormal{th},\textnormal{eq}}\right)(z,Q) \right)
        =
        J^{\textnormal{coh},\textnormal{eq}}(z,Q).
    \]
\end{prop}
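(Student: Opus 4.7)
The plan is to observe that almost all the analytic work has already been packaged into Proposition \ref{qkqde:prop_jk_eq_confl_sol}: that result produces an explicit formula for the transformed fundamental solution and shows that the $(0,i)$-entry admits a limit as $t \to 0$. What remains is therefore a matching computation between (a) the explicit limit of each coefficient of $\eta_i$ in $P_{q,z}^\textnormal{eq}\cdot\varphi_{q,z}^* J^{K\textnormal{th,eq}}$ and (b) the fixed-point decomposition of $J^{\textnormal{coh,eq}}$ in the idempotent basis $p_i=\prod_{j\neq i}\frac{H-\lambda_j}{\lambda_i-\lambda_j}$, together with the identification $\gamma_\textnormal{eq}(\eta_i)=p_i$.

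Concretely, I would first substitute the explicit formula from Proposition \ref{qkqde:prop_jk_eq_confl_sol} for the $(0,i)$-coefficient and pass to the limit termwise, which is legitimate because Proposition \ref{qkqde:prop_jk_eq_confl_sol} already establishes uniform convergence on compacts. The factor $\Lambda_i^{-\ell_{q^t}(Q)}$ tends to $Q^{\lambda_i/z}$ by the computation reproduced in the proof of Proposition \ref{qkqde:prop_jk_eq_confl_sol} (which itself rests on Proposition \ref{qde:prop_specfns_limits} and the relation $\Lambda_i=q^{-\lambda_i/z}$). Each ratio $\tfrac{1-q}{1-q^r\Lambda_j\Lambda_i^{-1}}$ tends to $\tfrac{z}{rz+\lambda_i-\lambda_j}$, so after taking the product over $r=1,\dots,d$ and $j=0,\dots,N$ (the $j=i$ factor contributing $\tfrac{1}{d!}$ as a special case of the general formula since $\lambda_i-\lambda_i=0$), the $d$-th term of the series becomes $Q^d\prod_{r=1}^d\prod_{j=0}^N(\lambda_i-\lambda_j+rz)^{-1}$.

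Having done that, I would then recognise this expression, up to the prefactor $Q^{\lambda_i/z}$, as the scalar $J^{\textnormal{coh,eq}}|_{H=\lambda_i}$ obtained by evaluating $J^{\textnormal{coh,eq}}(z,Q)$ at the $i$-th $T^{N+1}$-fixed point. Since $\eta_i$ and $p_i$ are the equivariant idempotents at the same fixed point in $K$-theory and cohomology, applying $\gamma_\textnormal{eq}$ gives
\[
    \gamma_\textnormal{eq}\!\left(\textnormal{confluence}(J^{K\textnormal{th,eq}})(z,Q)\right)
    =\sum_{i=0}^N \left(J^{\textnormal{coh,eq}}\big|_{H=\lambda_i}\right) p_i,
\]
and the equivariant localization isomorphism $H^*_{T^{N+1}}(\mathbb{P}^N)\otimes\mathbb{C}\xrightarrow{\sim}\bigoplus_i \mathbb{C}$ identifies the right-hand side with $J^{\textnormal{coh,eq}}(z,Q)$.

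The only nontrivial point is the bookkeeping for the identification $\gamma_\textnormal{eq}(\eta_i)=p_i$: the map $\gamma_\textnormal{eq}$ is defined on another natural basis of equivariant classes, so one must check that both $\eta_i$ and the class $\prod_{j\neq i}\tfrac{1-\Lambda_i P^{-1}}{1-\Lambda_i\Lambda_j^{-1}}$ appearing in the definition of $\gamma_\textnormal{eq}$ yield, through linearity and the defining relation $(1-\Lambda_0 P^{-1})\cdots(1-\Lambda_N P^{-1})=0$, the same fixed-point idempotent pattern on each side. This is the step I expect to be the most delicate to write down cleanly, but it reduces to evaluating both $\eta_i$ and $p_i$ at each fixed point and using that a class in $K_{T^{N+1}}(\mathbb{P}^N)\otimes\mathbb{C}$ (resp.\ $H^*_{T^{N+1}}(\mathbb{P}^N;\mathbb{C})$) is determined by its values at the $N+1$ fixed points. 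Everything else is a routine assembly of the limits already produced in Proposition \ref{qkqde:prop_jk_eq_confl_sol}.
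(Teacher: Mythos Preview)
Your approach is correct and essentially coincides with the paper's proof: write out the $(0,i)$-entry from Proposition \ref{qkqde:prop_jk_eq_confl_sol}, pass to the limit termwise (getting $Q^{\lambda_i/z}$ from the $q$-character and $\prod_{r,j}(\lambda_i-\lambda_j+rz)^{-1}$ from the $q$-Pochhammer), recognise the coefficient of $\eta_i$ as $J^{\textnormal{coh,eq}}|_{H=\lambda_i}$, and apply $\gamma_\textnormal{eq}$. The paper does exactly this, in slightly fewer words.

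One remark on your ``delicate bookkeeping'' step: it is not needed here. In the statement of this proposition, $\gamma_\textnormal{eq}$ is \emph{defined} by its values on the $\eta_i$, so there is nothing to reconcile with another presentation; you can simply use $\gamma_\textnormal{eq}(\eta_i)$ as given. The paper's proof accordingly does not discuss this point at all --- it just substitutes $\gamma_\textnormal{eq}(\eta_i)$ and observes that the resulting cohomological expression is the fixed-point decomposition of $J^{\textnormal{coh,eq}}$. Your invocation of the equivariant localisation isomorphism to reassemble $\sum_i (J^{\textnormal{coh,eq}}|_{H=\lambda_i})\,\gamma_\textnormal{eq}(\eta_i)$ into $J^{\textnormal{coh,eq}}$ is the right way to make that final identification explicit, and is what the paper leaves implicit in the phrase ``we conclude using $\gamma_\textnormal{eq}(\eta_i)=\ldots$''.
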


\begin{proof}
    We have
    \begin{align*}
        &P_{q,z}^\textnormal{eq} \cdot \varphi_{q,z}^* J^{K\textnormal{th,eq}}(t,z,Q)
        \\
        &=
        \sum_{i=0}^N
        \left(
            \Lambda_i^{-\ell_q(Q)}
            \sum_{d \geq 0} 
            \frac{1}{z^{d(N+1)}}
            \frac{(1-q)^{d(N+1)}Q^d}{\left(q\Lambda_0 \Lambda_i^{-1}, \dots q, \dots ,q\Lambda_N \Lambda_i^{-1} ;q \right)_d}
        \right)
        \eta_i.
    \end{align*}
    Thus,
    \[
        \textnormal{confluence}\left(J^{K\textnormal{th},\textnormal{eq}}\right)(z,Q)
        =
        \sum_{i=0}^N
        \left(
            Q^\frac{\lambda_i}{z}
            \sum_{d \geq 0} Q^d
            \prod_{r=1}^d \prod_{j=0}^N \frac{1}{(\lambda_i-\lambda_j+rz)}
        \right)
        \eta_i.
    \]
    We conclude using $\gamma_\textnormal{eq}(\eta_i) = \prod_{j \neq i} \frac{H-\lambda_i}{\lambda_j-\lambda_i}$, recalling that
    \[
        J^{\textnormal{coh},\textnormal{eq}}_{|H=\lambda_i}(z,Q)
        =
        Q^\frac{\lambda_i}{z}
        \sum_{d \geq 0} Q^d
        \prod_{r=1}^d \prod_{j=0}^N \frac{1}{(\lambda_i-\lambda_j+rz)}.
    \]
\end{proof}

\subsubsection*{Summary of the previous results.}\label{qkqde:ssubsection_proofsummary}

We have now all the ingredients to give the proof of Theorem \ref{qkqde:thm_confluence_jk_eq}.

\begin{proof}[Proof of Theorem \ref{qkqde:thm_confluence_jk_eq}]
    \textbf{Confluence of the equation}. Using the $q$-pullback $\varphi_{q,z}$ of Proposition \ref{qkqde:prop_confluence_equation_JK_equivariant}, we obtain a confluent $q$-difference system. Its limit is the differential equation associated to the small equivariant cohomological $J$-function.
    
    \textbf{Confluence of the solution}. As done in Equation \eqref{qkqde:eqn_fundamental_solution_before_confluence_equivariant}, we can encode the equivariant $K$-theoretical $J$-function as a fundamental solution of the $q$-pullback of the system (\ref{qkqde:eqn_pullback_JK_eq}), which we denote by $\mathcal{X}^{K\textnormal{th,eq}}\left(q,Q\right)$ in Equation \ref{qkqde:eqn_fundamental_solution_before_confluence_equivariant}.
    By Proposition \ref{qkqde:prop_jk_eq_confl_sol}, there exists a $q$-constant transformation $P_{q,z}^\textnormal{eq} \in  \textnormal{GL}_{N+1}\left( \mathcal{M}\left( \mathbb{E}_q \right) \right)$ such that the fundamental solution $\mathcal{X}^{K\textnormal{th,eq}}\left(q,Q\right) P_{q,z}^\textnormal{eq}$ is confluent.
    
    \textbf{Comparison with quantum cohomology}.
    The first row of the fundamental solution after the $q$-constant transformation $P_{q,z}$, $\mathcal{X}^{K\textnormal{th,eq}}\left(q,Q\right) P_{q,z}^\textnormal{eq}$, defines another $K$-theoretical function, which we denote by $P_{q,z}^\textnormal{eq} \cdot \varphi_{q,z}^*J^{K\textnormal{th,eq}}$ in Definition \ref{qkqde:def_confluence_jk_equivariant}.
    Since the fundamental solution was confluent, this function has a well defined limit when $q^t \to 1$.
    Using Proposition \ref{qkqde:prop_confluence_JK_equals_JH_equivariant}, we have
    \[
        \gamma_\textnormal{eq} \left( \lim_{t \to 0} P_{q^t,z}^\textnormal{eq} \cdot \varphi_{q^t,z}^*J^{K\textnormal{th,eq}}(q^t,Q) \right)
        =
        J^{\textnormal{coh,eq}}(z,Q).
    \]
\end{proof}

\subsection{Confluence and non equivariant limit}

Since Givental's equivariant $J$-functions have well defined non equivariant limit by setting $\lambda_i \to 0$ and $\Lambda_i \to 1$ for all $i \in \{0, \dots, N\}$, one may wonder if there is a statement analogue to the Theorem \ref{qkqde:thm_confluence_jk_eq} for non equivariant $J$-function.
While the answer is positive, the details are slightly more technical.

\subsubsection*{Definitions and statement of the theorem.}

\begin{remark}
    A basis of the non equivariant $K$-theory 
    \[
    K\left( \mathbb{P}^N \right) \simeq \mathbb{Z}[P,P^{-1}] \left/\left( \left(1-P^{-1} \right)^{N+1} \right) \right.
    \]
    is given by the integer powers of $1-P^{-1}$.
    Notice that the the non equivariant limit of the equivariant basis given by $\eta_i = \prod_{j \neq i} \frac{1 - \Lambda_j P^{-1}}{1 - \Lambda_j \Lambda_i^{-1}} \in K_{T^{N+1}} \left( \mathbb{P}^N \right)$ is not a basis the non equivariant $K$-theory.
\end{remark}

\begin{defin}
    Let $X = \mathbb{P}^N$ and let $P=\mathcal{O}(1) \in K\left( \mathbb{P}^N \right)$ be the anti-tautological bundle.
    \textit{Givental's small $K$-theoretical $J$-function} is the function given by
    \[
        J^{K\textnormal{th}}(q,Q) = P^{-\ell_q(Q)}\sum_{d \geq 0} \frac{Q^d}{\left(qP^{-1};q\right)_d^{N+1}}
        \in
        K \left( \mathbb{P}^N \right) \otimes \mathbb{C}(q) [\![Q]\!],
    \]
    where
    \[
        P^{-\ell_q(Q)}
        =
        \left( 1 - \left(1-P^{-1}\right) \right)^{\ell_q(Q)}
        =
        \sum_{k \geq 0} (-1)^k \binom{\ell_q(Q)}{k} \left( 1 - P^{-1} \right)^k,
    \]
    and
    \[
        \binom{\ell_q(Q)}{k} = \frac{1}{k!} \prod_{r=0}^{k-1} (\ell_q(Q)-r).
    \]
\end{defin}

\begin{prop}[\cite{Iri_Mil_Ton_qk}, Equation 10]
    The $J$-function $J^{K\textnormal{th}}(q,Q)$ is a solution of the $q$-difference equation
    \begin{equation}\label{qkqde:eqn_JK}
        \left[ \left( 1 - \qdeop{Q} \right)^{N+1} - Q \right]  \widetilde{J^{K\textnormal{th}}}(q,Q) = 0.
    \end{equation}
    This $q$-difference equation is regular singular at $Q=0$. 
\end{prop}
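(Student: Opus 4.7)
The plan is to break the proof into two independent checks: (a) the direct verification that the series $J^{K\textnormal{th}}(q,Q)$ annihilates the operator $(1-\qdeop{Q})^{N+1} - Q$, and (b) the structural claim that this scalar $q$-difference equation is regular singular at $Q=0$. The second should be immediate from a companion-matrix form; the first reduces to a single telescoping-type computation with the $q$-Pochhammer denominators.

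For (a), I would exploit the fact that each summand transforms multiplicatively under $\qdeop{Q}$. Set
\[
    f_d(Q) := P^{-\ell_q(Q)} \, \frac{Q^d}{(qP^{-1};q)_d^{N+1}}.
\]
Using $\qdeop{Q} P^{-\ell_q(Q)} = P^{-1} P^{-\ell_q(Q)}$ (which is verified earlier in the article) and $\qdeop{Q} Q^d = q^d Q^d$, one gets $\qdeop{Q} f_d = q^d P^{-1} f_d$, whence
\[
    (1-\qdeop{Q})^{N+1} f_d = (1-q^d P^{-1})^{N+1} f_d.
\]
The defining identity $(qP^{-1};q)_d = (qP^{-1};q)_{d-1}(1-q^d P^{-1})$ collapses the ratio $(1-q^d P^{-1})^{N+1}/(qP^{-1};q)_d^{N+1}$ to $1/(qP^{-1};q)_{d-1}^{N+1}$ whenever $d \geq 1$. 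For $d=0$ the corresponding factor is $(1-P^{-1})^{N+1}$, which vanishes identically in $K(\mathbb{P}^N) \simeq \mathbb{Z}[P,P^{-1}]/((1-P^{-1})^{N+1})$; this killing of the boundary term is what makes the argument work. A reindexing $d \mapsto d+1$, using $f_{d+1} = Q \cdot f_d \cdot (\text{correct denominator})$, rewrites the sum as $Q \cdot J^{K\textnormal{th}}$, which proves the equation.

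For (b), I would place the scalar equation in companion form by taking $X_q = (J, \qdeop{Q} J, \dots, \qdeop{Q}^N J)^{T}$, giving $\qdeop{Q} X_q = A_q(Q) X_q$ with $A_q$ the companion matrix associated to the polynomial $(1-\lambda)^{N+1} - Q$. The coefficients are polynomial in $Q$, so $A_q(0)$ is well-defined and equals the companion matrix of $(1-\lambda)^{N+1}$; it is a single Jordan block around the eigenvalue $1$, in particular invertible, so Definition \ref{qde:def_regular_singular_qde} is satisfied with trivial gauge transform. Alternatively, one may obtain regular singularity as the $\Lambda_i \to 1$ specialisation of the equivariant statement already recalled in the article.

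The step that requires the most care is the boundary case $d=0$ in part (a): one must remember to work in $K(\mathbb{P}^N)$ rather than in the free polynomial ring, so that $(1-P^{-1})^{N+1}=0$. This relation is crucial and has no analogue in the equivariant setting (where instead one relies on $\prod_i(1-\Lambda_i P^{-1}) = 0$), so the two derivations are formally parallel but require different algebraic inputs.
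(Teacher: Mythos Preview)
Your argument is correct on both counts. The telescoping computation in (a) is the standard verification for $q$-hypergeometric solutions, and the crucial point---that the boundary term $(1-P^{-1})^{N+1}f_0$ vanishes because of the ring relation in $K(\mathbb{P}^N)$---is exactly right. The companion-matrix argument for (b) is also fine: the matrix $A_q(Q)$ has polynomial entries in $Q$, and $A_q(0)$ is the companion matrix of $(1-\lambda)^{N+1}$, hence unipotent and in particular invertible, so Definition~\ref{qde:def_regular_singular_qde} applies with the identity gauge.

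There is nothing in the paper to compare against: the proposition is stated with a citation to \cite{Iri_Mil_Ton_qk} and no proof is given here (just as the equivariant analogue, equation~(\ref{qkqde:eqn_JK_eq}), is cited from \cite{Giv_PermEquiv_V} without proof). Your write-up therefore supplies what the paper omits, and does so by the expected direct method.
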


\begin{defin}[\cite{CK_book}, Proposition 11.2.1; see also Equation (10.39)]
    Givental's small cohomological $J$-function is given by the expression
    \begin{equation*}
        J^\textnormal{coh}(z,Q) = Q^{\frac{H}{z}} \sum_{d \geq 0} \frac{Q^d}{\prod_{r=1}^d \left( H + rz\right)^{N+1}}
        \in
        H\left( \mathbb{P}^N \right) \otimes \mathbb{C}[z,z^{-1}][\![Q]\!].
    \end{equation*}
\end{defin}

\begin{prop}[\cite{CK_book}, Equation 10.38]
    This function is a solution of the differential equation
    \begin{equation}\label{qkqde:eqn_pde_JH}
        \left[(zQ \partial_Q)^{N+1} - Q\right]
        J^\textnormal{coh}(z,Q) = 0.
    \end{equation}
\end{prop}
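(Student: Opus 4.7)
The plan is to verify the differential equation by a direct termwise computation, using only the action of $zQ\partial_Q$ on monomials and the defining relation $H^{N+1}=0$ in $H^*(\mathbb{P}^N;\mathbb{Q})$.

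First, I would compute how $zQ\partial_Q$ acts on a generic summand. Since $zQ\partial_Q(Q^{H/z})=HQ^{H/z}$ and $zQ\partial_Q(Q^d)=zdQ^d$, the Leibniz rule gives $zQ\partial_Q(Q^{H/z+d})=(H+zd)Q^{H/z+d}$, and iterating yields $(zQ\partial_Q)^{N+1}(Q^{H/z+d})=(H+zd)^{N+1}Q^{H/z+d}$. Applying this termwise and pulling out the common factor $Q^{H/z}$, I obtain
\[
(zQ\partial_Q)^{N+1} J^\textnormal{coh}(z,Q) = Q^{H/z} \sum_{d \geq 0} \frac{(H+zd)^{N+1}}{\prod_{r=1}^d (H+rz)^{N+1}} Q^d.
\]

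Next, I would handle the two ranges of $d$ separately. For $d=0$, the numerator is $H^{N+1}$, which vanishes in $H^*(\mathbb{P}^N;\mathbb{Q})$ by the defining relation of the cohomology ring, so this term drops out. For $d\geq 1$, the factor $(H+zd)^{N+1}$ in the numerator cancels precisely the $r=d$ factor in the denominator, leaving $1/\prod_{r=1}^{d-1}(H+rz)^{N+1}$. Reindexing $d\mapsto d+1$ then rewrites the right-hand side as $Q\cdot J^\textnormal{coh}(z,Q)$, which is exactly the desired identity $\bigl[(zQ\partial_Q)^{N+1}-Q\bigr]J^\textnormal{coh}(z,Q)=0$.

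There is really no serious obstacle here; the argument is a short formal manipulation. The only points requiring mild care are the commutation of the differential operator with the infinite sum, which is legitimate because $Q^{-H/z}J^\textnormal{coh}$ is an honest element of $H^*(\mathbb{P}^N)\otimes\mathbb{C}[z,z^{-1}][\![Q]\!]$ so that the computation may be performed coefficient by coefficient, and the invocation of $H^{N+1}=0$ to kill the $d=0$ term, which is the only place where the specific cohomology ring of $\mathbb{P}^N$ enters. This mirrors the analogous verification already implicit in Proposition \ref{qkqde:prop_confluence_JK_equals_JH_equivariant} for the equivariant $J$-function, specialised to $\lambda_i\to 0$.
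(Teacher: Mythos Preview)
Your argument is correct: the termwise application of $(zQ\partial_Q)^{N+1}$, the use of $H^{N+1}=0$ to kill the $d=0$ term, and the cancellation-plus-reindexing for $d\geq 1$ are all valid, and your remark about working coefficientwise in $\mathbb{C}[z,z^{-1}][\![Q]\!]$ disposes of any convergence issue. The paper itself offers no proof of this proposition; it is stated as a standard fact, so there is nothing to compare against beyond noting that your direct verification is the expected one.
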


\begin{thm}\label{qkqde:thm_confluence_jk}
    Let $X = \mathbb{P}^N$.
    Denote by $J^{K\textnormal{th}}$ (resp. $J^{\textnormal{coh}}$) Givental's small $K$-theoretical (resp. cohomological) $J$-function.
    Let $q \in \mathbb{C}, 0<|q|<1$ and $z \in \mathbb{C}^*$.
    The following statements hold:
    \begin{enumerate}[label=(\roman*)]
        \item Consider the application $\varphi_{q,z}$ defined by
        \[
            \functiondesc{\varphi_{q,z}}{\mathbb{C}}{\mathbb{C}}{Q}{\left( \frac{z}{1-q} \right)^{N+1} Q}
        \]
        The pullback by $\varphi_{q,z}$ of the $q$-difference equation (\ref{qkqde:eqn_JK}) satisfied by the $K$-theoretical $J$-function is confluent.
        Moreover, its formal limit when $q \to 1$ is the differential equation (\ref{qkqde:eqn_pde_JH}) satisfied by the cohomological $J$-function.
        
        \item Let $
            \mathbb{E}_q
        $
        be the complex torus 
        $
            \mathbb{C}^* / q^\mathbb{Z}
        $
        and
        $\mathcal{M} \left( \mathbb{E}_q \right)$
        be the space of meromorphic functions on said complex torus. Consider the isomorphism of rings
        $
            \gamma : 
            K \left( \mathbb{P}^N \right) \otimes \mathbb{C} 
            \to 
            H^*\left( \mathbb{P}^N, \mathbb{C} \right)
        $
        defined by, for all $i \in \{0, \dots, N\}$
        \[
            \gamma
            \left(
                \left( 1 - P^{-1} \right)^i
            \right)
            =
            H^i
        \]
        Then, there exists a change of fundamental solution
        $
            P_{q,z} \in \textnormal{GL}_{N+1} \left(
                \mathcal{M} \left( \mathbb{E}_q \right)
            \right)
        $
        such that the fundamental solution $J^{K\textnormal{th}}$ verifies
        \[
            \gamma \left(
                \lim_{t \to 0}
                P_{q^t,z} \cdot \left(
                    \varphi^*_{q^t,z}
                    J^{K\textnormal{th}}\left(q^t,Q\right)
                \right)
            \right)
            =
            J^{\textnormal{coh}}(z,Q)
        \]
    \end{enumerate}
\end{thm}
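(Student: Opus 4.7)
My proof would follow Theorem \ref{qkqde:thm_confluence_jk_eq} step by step, adapting each piece to the non-equivariant ring $K(\mathbb{P}^N) \simeq \mathbb{C}[u]/(u^{N+1})$ with $u = 1-P^{-1}$. Since this algebra has no analogue of the idempotent basis $\eta_i$ indexed by the torus fixed points, I would build the fundamental solution from the coefficients of $J^{K\text{th}}$ in the nilpotent basis $\{u^i\}_{i=0}^{N}$, and construct $P_{q,z}$ as the product of two pieces: one $q$-constant matrix absorbing the shift of the $q$-logarithm produced by $\varphi_{q,z}^*$, and one diagonal matrix rescaling this basis to match the nilpotency orders.

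\textbf{Part (i).} This is a direct adaptation of Proposition \ref{qkqde:prop_confluence_equation_JK_equivariant}. Substituting $1-\qdeop{Q} = (1-q)\delta_q$ turns (\ref{qkqde:eqn_JK}) into $[(1-q)^{N+1}\delta_q^{N+1} - Q]\,J^{K\text{th}} = 0$; the pullback by $\varphi_{q,z}$ replaces $Q$ by $((1-q)/z)^{N+1}Q$, and dividing by $(1-q)^{N+1}$ gives $[\delta_q^{N+1} - Q/z^{N+1}]\,\varphi_{q,z}^*J^{K\text{th}} = 0$, whose formal limit as $q \to 1$ is exactly (\ref{qkqde:eqn_pde_JH}).

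\textbf{Part (ii).} I would first expand $J^{K\text{th}}(q, \varphi_{q,z}^{-1}(Q)) = \sum_{i=0}^{N} J_i(q,Q)\,u^i$; each $J_i$ is a scalar solution of the pulled-back equation, and the $J_i$ are $\mathcal{M}(\mathbb{E}_q)$-linearly independent since they are polynomials in $\ell_q(Q)$ of distinct degrees $0, 1, \ldots, N$, so $(\mathcal{X}^{K\text{th}})_{l,i} := \delta_q^l J_i$ is a fundamental solution. For $P_{q,z}$, write $\ell_q(\varphi_{q,z}^{-1}(Q)) = \ell_q(Q) + c_q$ with $c_q = (N+1)\ell_q((1-q)/z)$ a $q$-constant, so that $P^{-\ell_q(\varphi^{-1}(Q))} = P^{-c_q}\,P^{-\ell_q(Q)}$; multiplication by $P^{-c_q}$ is an upper-triangular $q$-constant matrix with entries $(-1)^{k-j}\binom{c_q}{k-j}$, whose inverse forms the first piece of $P_{q,z}$ and replaces $\ell_q(\varphi^{-1}(Q))$ by $\ell_q(Q)$ throughout. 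The second piece is the diagonal matrix $\operatorname{diag}(((1-q)/z)^i)$, which is equivalent to the substitution $u \mapsto (1-q)u/z$. Two limit computations then finish the confluence: on the prefactor side, $\lim_{t\to 0}((1-q^t)/z)^i(-1)^i\binom{\ell_{q^t}(Q)}{i} = \log(Q)^i/(i!z^i)$ by Proposition \ref{qde:prop_specfns_limits}, so $\gamma$ sends the limit prefactor to $Q^{H/z}$; on the series side, the expansion $1-q^r(1-(1-q)u/z) = (1-q)(r+u/z)+O((1-q)^2)$ together with $u^{N+1}=0$ gives $\lim_{q\to 1}(1-q)^{N+1}/(z^{N+1}(1-q^r(1-(1-q)u/z))^{N+1}) = 1/(rz+u)^{N+1}$ term-by-term in $u$, so $\gamma$ sends the limit of the rescaled series to $\sum_d Q^d/\prod_{r=1}^d(H+rz)^{N+1}$, and the product recovers $J^{\text{coh}}(z,Q)$.

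\textbf{Main obstacle.} The delicate point will be verifying that the nilpotency-adapted diagonal rescaling $\operatorname{diag}(((1-q)/z)^i)$ exactly cancels the divergences of the components $J_i$: the $u^i$-coefficient of $J^{K\text{th}}(q,\varphi^{-1}(Q))$ diverges like $(1-q)^{-i}$, produced jointly by $\binom{\ell_q(Q)}{i}$ in the prefactor and by the $u^l$-coefficient of the series $\sum_d((1-q)/z)^{d(N+1)}Q^d/(qP^{-1};q)_d^{N+1}$ (which diverges like $(1-q)^{-l}$ for each $l \leq i$, as one sees by expanding $1/(1-q^r P^{-1})^{N+1}$ in $u$). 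In Theorem \ref{qkqde:thm_confluence_jk_eq} this cancellation was automatic because each column corresponded to a distinct torus fixed point carrying its own convergent $q$-character $\Lambda_i^{-\ell_q(Q)}$; here the fixed points have collapsed, and the convergent object must be produced combinatorially, the identity $\lim_{q\to 1}((1-q)/z)^i(-1)^i\binom{\ell_q(Q)}{i} = \log(Q)^i/(i!z^i)$ being the non-equivariant substitute for the equivariant relation $\Lambda_i = q^{-\lambda_i/z}$.
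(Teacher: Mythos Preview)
Your proposal is correct and matches the paper's own non-equivariant argument: the paper also builds $P_{q,z}$ as the product of a $q$-constant matrix replacing $\ell_q\bigl(((1-q)/z)^{N+1}Q\bigr)$ by $\ell_q(Q)$ and the diagonal $\operatorname{diag}\bigl(((1-q)/z)^i\bigr)$, then takes the limit and compares with $J^{\textnormal{coh}}$ coefficient by coefficient in the basis $(1-P^{-1})^i$. Your interpretation of the diagonal piece as the substitution $u \mapsto (1-q)u/z$ gives a slightly cleaner limit computation than the paper's explicit expansion of the auxiliary functions $f_b$; the only slip is that $\ell_q$ is not a group homomorphism, so your $c_q$ is merely some element of $\mathcal{M}(\mathbb{E}_q)$ rather than literally $(N+1)\ell_q((1-q)/z)$, but this is immaterial to the argument.
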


The plan of the proof is the same as in the equivariant setting (Theorem \ref{qkqde:thm_confluence_jk_eq}): first we study the confluence of the $q$-difference equation, then the confluence of Givental's $J$-function as a fundamental solution, which we compare to the cohomological $J$-function.
However, the confluence of the fundamental solution requires a different change of fundamental solution 
$
    P_{q,z} \in \textnormal{GL}_{N+1} \left(
    \mathcal{M} \left( \mathbb{E}_q \right)
    \right)
$, which is slightly more complex than in the equivariant case.
For a detailed proof of this statement, we will refer to Section VI.2 of \cite{roquefeuil_thesis}.

\subsubsection*{Confluence of the $q$-difference equation.}

\begin{prop}\label{qkqde:prop_jk_eqn_pullback}
    Consider the $q$-difference equation (\ref{qkqde:eqn_JK}) :
    \[
        \left( 1 - q^{Q \partial_Q} \right)^{N+1} f(q,Q) = Q f(q,Q).
    \]
    Let $z \in \mathbb{C}^*$ and let $\varphi_{q,z}$ be the function
    \[
        \functiondesc{\varphi_{q,z}}{\mathbb{C}}{\mathbb{C}}{Q}{\left( \frac{z}{1-q} \right)^{N+1} Q}.
    \]
    
    Then, the $q$-pullback of the $q$-difference equation (\ref{qkqde:eqn_JK}) by $\varphi_{q,z}$ is confluent, and its limit is the differential equation (\ref{qkqde:eqn_pde_JH}) satisfied by Givental's small cohomological $J$-function.
\end{prop}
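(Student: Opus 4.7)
The plan is to mirror the computation done in Proposition \ref{qkqde:prop_confluence_equation_JK_equivariant}, which here becomes considerably simpler because there are no equivariant parameters $\Lambda_i$ to keep track of. The central algebraic identity is
\[
    1 - \qdeop{Q} = -(q-1)\delta_q = (1-q)\delta_q,
\]
where $\delta_q = \frac{\qdeop{Q}-\textnormal{Id}}{q-1}$. Since $(1-q)$ is a scalar commuting with everything, the $K$-theoretic operator $(1-\qdeop{Q})^{N+1}$ factorises as $(1-q)^{N+1}\delta_q^{N+1}$, so the $q$-difference equation (\ref{qkqde:eqn_JK}) rewrites cleanly as $\left[(1-q)^{N+1}\delta_q^{N+1} - Q\right] f(q,Q) = 0$.

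Next I would apply the $q$-pullback $\varphi_{q,z}$. Since $\varphi_{q,z}$ is a scalar dilation of $Q$, it commutes with $\qdeop{Q}$ and therefore leaves $\delta_q$ unchanged; it only substitutes $Q$ by $\varphi_{q,z}^{-1}(Q) = \left(\frac{1-q}{z}\right)^{N+1} Q$ in the coefficients. Performing the substitution and dividing by $(1-q)^{N+1}$ gives
\[
    \left[\delta_q^{N+1} - \frac{Q}{z^{N+1}}\right] f = 0,
\]
or equivalently, since $z$ is a constant,
\[
    \left[(z\delta_q)^{N+1} - Q\right] f = 0.
\]
The coefficients of this equation are polynomial in $Q$ and completely independent of $q$, so the formal limit $q \to 1$ exists trivially; using $\lim_{q \to 1}\delta_q = Q\partial_Q$ we obtain $\left[(zQ\partial_Q)^{N+1} - Q\right] f = 0$, which is exactly the differential equation (\ref{qkqde:eqn_pde_JH}) satisfied by $J^\textnormal{coh}$. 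To check confluence in the precise sense of Definition \ref{qde:def_confluentsys}, I would rewrite the scalar equation as the first-order companion system for the vector $(f,\delta_q f,\ldots,\delta_q^N f)^T$; the associated matrix $B_q(Q)$ is literally independent of $q$, so conditions (ii), (iii), (iv) hold immediately, and condition (i) is vacuous because no finite nonzero poles appear.

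The main conceptual point, rather than a technical obstacle, is the choice of the scaling exponent $N+1$ in $\varphi_{q,z}$. It is dictated by the need to cancel precisely the factor $(1-q)^{N+1}$ produced by the $(N+1)$-fold iteration of $1-\qdeop{Q}$; any other integer exponent either leaves a divergent $(1-q)^{-k}$ factor on the $Q$ term or kills the leading $\delta_q^{N+1}$ after taking $q \to 1$, matching the uniqueness remark made after Proposition \ref{qkqde:prop_confluence_equation_JK_equivariant}. Everything else is a one-line computation.
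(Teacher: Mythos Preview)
Your argument is correct and matches the paper's approach: the paper simply says to set $\Lambda_i \to 1,\ \lambda_i \to 0$ in the proof of Proposition~\ref{qkqde:prop_confluence_equation_JK_equivariant}, which collapses that expansion to exactly your one-line factorisation $1-\qdeop{Q}=(1-q)\delta_q$, and then records the pulled-back equation $\left[(z\delta_q)^{N+1}-Q\right]f=0$ as you do. Your additional remarks on the companion system and on why the exponent $N+1$ is forced are accurate and go slightly beyond what the paper spells out.
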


The proof of this proposition can be obtained by setting $\Lambda_i \to 1, \lambda_i \to 0$ for all $i \in \{0, \dots, N\}$ in the proof of Proposition \ref{qkqde:prop_confluence_equation_JK_equivariant}
Writing $\delta_q = \frac{\qdeop{Q}-\textnormal{id}}{q-1}$, the pullback by $\varphi_{q,z}$ of the $q$-difference equation (\ref{qkqde:eqn_JK}) is given by
\begin{equation}\label{qkqde:eqn_qde_jk_pullback}
    \left[
        \left( z \delta_q \right)^{N+1} - Q
    \right]
    J^{K\textnormal{th}}\left(q,\varphi_{q,z}^{-1}(Q)\right)
    =
    0.
\end{equation}

\subsubsection*{Confluence of the fundamental solution.}

    Consider the decomposition
    \[
        J^{K\textnormal{th}}(q,Q)
        =
        \sum_{i=0}^N J_i(q,Q) \left(1-P^{-1} \right)^i
        \in
        K \left( \mathbb{P}^N \right) \otimes \mathbb{C}(q) [\![Q]\!]
    \]
    Givental's small $K$-theoretical $J$-functions can be encoded in the fundamental solution of the $q$-difference equation (\ref{qkqde:eqn_qde_jk_pullback}) given by the matrix

    \begin{align}\label{qkqde:eqn_pullback_of_fundamental_solution_matrix_form}
    &\mathcal{X}^{K\textnormal{th}} \left(q,Q\right) \nonumber
    \\
    &=
    \begin{pmatrix}
        J_0\left(q,\left( \frac{1-q}{z}\right)^{N+1} Q \right)            &   J_1\left(q,\left( \frac{1-q}{z}\right)^{N+1} Q \right)            &    \cdots  &   J_N\left(q,\left( \frac{1-q}{z}\right)^{N+1} Q \right)               \\
        \delta_q J_0\left(q,\left( \frac{1-q}{z}\right)^{N+1} Q \right)   &   \delta_q  J_1\left(q,\left( \frac{1-q}{z}\right)^{N+1} Q \right)  &    \cdots  &   \delta_q J_N\left(q,\left( \frac{1-q}{z}\right)^{N+1} Q \right)      \\
        \vdots                          &   \vdots                          &    \ddots  &   \vdots                             \\
        \delta_q^N J_0\left(q,\left( \frac{1-q}{z}\right)^{N+1} Q \right) &   \delta_q^N J_1\left(q,\left( \frac{1-q}{z}\right)^{N+1} Q \right) &    \cdots  &   \delta_q^N J_N\left(q,\left( \frac{1-q}{z}\right)^{N+1} Q \right)    \\
    \end{pmatrix}
\end{align}

\begin{prop}[\cite{roquefeuil_thesis}, Proposition VI.2.3.3]\label{qkqde:prop_JK_confluence_sol}
    There exists an explicit $q$-constant matrix $P_{q,z} \in \textnormal{GL}_{N+1}\left( \mathcal{M}\left( \mathbb{E}_q \right) \right)$ such that the new fundamental solution $ \mathcal{X}^{K\textnormal{th}} \left(q,Q \right) P_{q,z}$ obtained from Equation \ref{qkqde:eqn_pullback_of_fundamental_solution_matrix_form} is given by
    \[
        \left(
            \mathcal{X}^{K\textnormal{th}}(q,Q) \! P_{q,z}
        \right)_{li}
        =
        \left(\delta_q\right)^l
        \!
        \sum_{\substack{
                0 \leq a, b \leq N  \\  a + b = i
            }}
        \left( \frac{q-1}{z} \right)^a 
        \binom{\ell_{q} \left( Q \right)}{a}
        \left( \frac{1-q}{z} \right)^b f_b \left(
            q,\left( \frac{1-q}{z}\right)^{N+1} Q
        \right),
    \]
    where the functions $f_b$ are defined by
    \begin{align*}
        &f_b(q,Q) = \sum_{d \geq 0} \frac{Q^d}{(q;q)_d^{N+1}}
        \times
        \\
        &\times
        \left(
            \sum_{k=0}^N \sum_{\substack{
            0 \leq j_1,\dots,j_N \leq N \\  j_1 + \cdots + j_N = k \\ j_1 + 2 j_2 + \cdots + N j_N = b
            }}
            (-1)^k \frac{(N+k)!}{N! j_1! \cdots j_N!}
            \prod_{l=1}^N \left(
                \sum_{1 \leq m_1 < \cdots < m_l \leq d} \frac{q^{m_1+\cdots+m_l}}{(1-q^{m_1}) \cdots (1-q^{m_l})}
            \right)^{j_l}
        \right).
    \end{align*}
    Moreover, this fundamental solution has a non trivial limit when $q^t$ tends to $1$.
    \qed
\end{prop}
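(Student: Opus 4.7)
The plan is to follow the same overall strategy as the proof of the equivariant Proposition~\ref{qkqde:prop_jk_eq_confl_sol}, with the extra technical twist that the non-equivariant basis $\{(1-P^{-1})^i\}_{i=0}^N$ of $K(\mathbb{P}^N)$ does not descend from the equivariant basis $\{\eta_i\}$. The first step is to expand $J^{K\textnormal{th}}(q,Q)$ explicitly in this basis. For the exponential prefactor $P^{-\ell_q(Q)}$, the definition already supplies $\sum_a (-1)^a \binom{\ell_q(Q)}{a} (1-P^{-1})^a$. For the series part, I would set $u := 1-P^{-1}$, write $1 - q^r P^{-1} = (1-q^r)\bigl(1 + q^r u/(1-q^r)\bigr)$, and apply the generalised binomial theorem $(1+x)^{-(N+1)} = \sum_k (-1)^k \binom{N+k}{k} x^k$ to each of the $d$ factors of $(qP^{-1};q)_d^{N+1}$. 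Grouping the resulting expansion by total degree $b$ in $u$ produces, through the elementary symmetric polynomials $e_l$ in the variables $\{q^r/(1-q^r)\}_{r=1}^d$, exactly the functions $f_b(q,Q)$ of the statement, yielding $J^{K\textnormal{th}}(q,Q) = \sum_{i=0}^N J_i(q,Q)(1-P^{-1})^i$ with $J_i(q,Q) = \sum_{a+b=i}(-1)^a\binom{\ell_q(Q)}{a} f_b(q,Q)$.

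The second step is to construct the change of fundamental solution $P_{q,z}$. After the pullback $\varphi_{q,z}$, the $q$-logarithm appearing in each $J_i(q,\varphi_{q,z}^{-1}(Q))$ is $L' := \ell_q((\tfrac{1-q}{z})^{N+1} Q)$; by Lemma~\ref{qde:prop_ell_q_is_a_q_log}, the difference $C_{q,z} := L' - \ell_q(Q)$ is $q$-invariant in $Q$, hence an element of $\mathcal{M}(\mathbb{E}_q)$. The Vandermonde identity $\binom{L'}{a} = \sum_{j=0}^{a} \binom{\ell_q(Q)}{j} \binom{C_{q,z}}{a-j}$, combined with a diagonal rescaling of the $i$-th column by $(\tfrac{1-q}{z})^i$, leads to an upper triangular linear system with nonzero diagonal entries $(\tfrac{1-q}{z})^i$. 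Solving it from $i = N$ down to $i = 0$ produces $P_{q,z}$ with entries that are polynomials in $C_{q,z}$ weighted by powers of $(1-q)/z$, hence $q$-constant and meromorphic on $\mathbb{E}_q$. Because $\delta_q$ commutes with multiplication by $q$-constants, the same transformation applies simultaneously to every row $l$, giving the claimed closed form for $(\mathcal{X}^{K\textnormal{th}}(q,Q)\, P_{q,z})_{li}$.

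Finally, to check that the transformed fundamental solution has a non-trivial limit as $q = q^t \to 1$, I would analyse each summand of the $(l,i)$-entry. Proposition~\ref{qde:prop_specfns_limits} gives $(q-1)^a \binom{\ell_q(Q)}{a} \to \log(Q)^a/a!$. For the remaining factor, the classical asymptotics $1-q^r \sim r(1-q)$ yield $(q;q)_d^{N+1} \sim (1-q)^{d(N+1)}(d!)^{N+1}$ and $e_l \sim (1-q)^{-l}$ times a finite sum of products of reciprocals $1/(m_1\cdots m_l)$. Since $\sum_l l j_l = b$, the powers of $(1-q)$ cancel exactly after combining with the prefactor $(\tfrac{1-q}{z})^b$ and the $((1-q)/z)^{d(N+1)} Q^d$ coming from the pullback, yielding a convergent power series in $Q$ whose coefficients carry denominators of the form $\prod_r (rz)^{N+1}$ --- precisely those of the cohomological $J$-function.

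The main obstacle is to calibrate the change of basis in the second step precisely: the Vandermonde rewriting naturally mixes the indices $(a,b)$, while the column rescaling must be tuned so that both the confluent $q$-logarithm $\ell_q(Q)$ (replacing $L'$) and the correct powers of $(1-q)/z$ emerge together in the form dictated by the statement. A miscalibration would either leave $L'$ in place, breaking confluence, or produce an incorrect power of $(1-q)$ that would fail to cancel against the $f_b$ asymptotics in the third step.
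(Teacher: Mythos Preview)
Your proposal is correct and follows essentially the same approach as the paper: expand $J^{K\textnormal{th}}$ in the basis $\{(1-P^{-1})^i\}$ to isolate the functions $f_b$, replace the pulled-back $q$-logarithm $\ell_q\bigl((\tfrac{1-q}{z})^{N+1}Q\bigr)$ by $\ell_q(Q)$ via a $q$-constant change of basis, and rescale the $i$-th column by $(\tfrac{1-q}{z})^i$ so that the powers of $(1-q)$ match those needed for both $(q-1)^a\binom{\ell_q(Q)}{a}$ and the $f_b$ asymptotics. Your use of the Chu--Vandermonde identity to make the $q$-logarithm substitution explicit is a welcome clarification of a step the paper only sketches (deferring details to \cite{roquefeuil_thesis}).
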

In that case, we can not use the proof of the equivariant statement (Proposition \ref{qkqde:prop_jk_eq_confl_sol}), as the non equivariant limit of our basis in equivariant $K$-theory is not a basis in non equivariant $K$-theory.
However, the technique will be similar, so we will refer to Proposition VI.2.3.3 of \cite{roquefeuil_thesis} for the complete proof.
Just like in Proposition \ref{qkqde:prop_jk_eq_confl_sol}, we will need to change the $q$-logarithms, but to obtain a well defined limit, the $i$-th column of the  matrix \eqref{qkqde:eqn_pullback_of_fundamental_solution_matrix_form} has to be multiplied by the factor $\left( \frac{q-1}{z} \right)^i$.

\subsubsection*{Comparison with quantum cohomology.}

To complete the proof of Theorem \ref{qkqde:thm_confluence_jk}, it remains to compare the limit of the first row of the fundamental solution $\mathcal{X}^{K\textnormal{th}} \left(q,Q \right)P_{q,z}$ with Givental's small cohomological $J$-function.

\begin{defin}
    We denote by $P_{q,z} \cdot \varphi_{q,z}^* J^{K\textnormal{th}}(q,Q)$ the $K$-theoretical function defined by
    \[
        P_{q,z} \cdot \varphi_{q,z}^* J^{K\textnormal{th}}(q,Q)
        =
        \sum_{i=0}^N
        \left(
            \mathcal{X}^{K\textnormal{th}} \left(q,Q \right)P_{q,z}
        \right)_{0i}
        H^i
        \in
        K\left( \mathbb{P}^N \right) \otimes \mathbb{C}(q,z)[\![Q]\!]
    \]
\end{defin}

\begin{prop}
    Consider the ring automorphism
    $ \gamma : K\left( \mathbb{P}^N \right)_\mathbb{Q} \to H^*\left( \mathbb{P}^N; \mathbb{Q}\right)$ defined by $\gamma(1-P^{-1})=H$.
    The following asymptotic holds
    \[
        \gamma \left( \lim_{t \to 0} P_{q^t,z} \cdot \varphi_{q^t,z}^* J^{K\textnormal{th}}(q^t,Q) \right)
        =
        J^{\textnormal{coh}}(z,Q).
    \]
\end{prop}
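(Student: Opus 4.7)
The plan is to mirror the strategy used in the equivariant proof (Proposition \ref{qkqde:prop_confluence_JK_equals_JH_equivariant}), but with the extra combinatorial bookkeeping required because the basis $(1-P^{-1})^i$ of $K(\mathbb{P}^N)$ is a truncated-polynomial basis rather than an idempotent basis. I would start from the explicit first-row formula of Proposition \ref{qkqde:prop_JK_confluence_sol} and exploit that $K(\mathbb{P}^N)\otimes\mathbb{C}\simeq\mathbb{C}[1-P^{-1}]/((1-P^{-1})^{N+1})$: the constraint $a+b=i$ with $0\leq a,b\leq N$ is exactly what one gets by multiplying two polynomials in $1-P^{-1}$ modulo $(1-P^{-1})^{N+1}$. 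This lets me factor
\[
P_{q,z}\cdot\varphi_{q,z}^{*}J^{K\textnormal{th}}(q,Q) = A_q(Q)\cdot B_{q,z}(Q),
\]
where $A_q(Q)=\sum_{a=0}^{N}\left(\frac{q-1}{z}\right)^{a}\binom{\ell_q(Q)}{a}(1-P^{-1})^{a}$ and $B_{q,z}(Q)=\sum_{b=0}^{N}\left(\frac{1-q}{z}\right)^{b}f_b\bigl(q,(\frac{1-q}{z})^{N+1}Q\bigr)(1-P^{-1})^{b}$. Since $\gamma$ is a ring isomorphism, it suffices to compute $\lim_{t\to 0}A_{q^t}$ and $\lim_{t\to 0}B_{q^t,z}$ separately and multiply.

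For the factor $A$, I observe that $\left(\frac{q-1}{z}\right)^{a}\binom{\ell_q(Q)}{a}=\frac{1}{a!\,z^{a}}\prod_{r=0}^{a-1}\bigl((q-1)\ell_q(Q)-(q-1)r\bigr)$ is a polynomial in $(q-1)\ell_q(Q)$ of degree $a$, and Proposition \ref{qde:prop_specfns_limits} gives $(q^t-1)\ell_{q^t}(Q)\to\log(Q)$. Therefore $\lim_{t\to 0}\gamma(A_{q^t})=\sum_{a=0}^{N}\frac{(\log Q)^{a}H^{a}}{a!\,z^{a}}=Q^{H/z}$ modulo $H^{N+1}$. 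For the factor $B$, the coefficient of $Q^{d}$ contains the prefactor $\frac{(1-q)^{d(N+1)}}{(q;q)_d^{N+1}z^{d(N+1)}}$, which converges to $\frac{1}{(d!)^{N+1}z^{d(N+1)}}$ by the classical limit $\frac{1-q}{1-q^{r}}\to\frac{1}{r}$. The $(1-q)^{b}$ prefactor partitions as $\prod_{l}(1-q)^{lj_l}$ and compensates exactly the $l$ denominator factors in each $\left(\frac{q^{m_1+\cdots+m_l}}{(1-q^{m_1})\cdots(1-q^{m_l})}\right)^{j_l}$, giving in the limit $\bigl(e_l(\tfrac{1}{z},\tfrac{1}{2z},\ldots,\tfrac{1}{dz})\bigr)^{j_l}$ where $e_l$ is the $l$-th elementary symmetric polynomial.

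The third step is the combinatorial identification that I expect to be the main obstacle: the doubly-indexed sum defining $\lim\gamma(B)$ must be recognised as the expansion in $H$ of the closed form coming from $J^{\textnormal{coh}}$. Setting $S=\prod_{r=1}^{d}(1+H/(rz))-1=\sum_{l=1}^{d}e_l H^{l}$ and applying the binomial series $(1+S)^{-(N+1)}=\sum_{k\geq 0}(-1)^{k}\binom{N+k}{k}S^{k}$, the coefficient of $H^{b}$ in $\prod_{r=1}^{d}(1+H/(rz))^{-(N+1)}$ equals
\[
\sum_{k\geq 0}(-1)^{k}\frac{(N+k)!}{N!}\sum_{\substack{j_1+\cdots+j_d=k\\ j_1+2j_2+\cdots+dj_d=b}}\frac{1}{j_1!\cdots j_d!}\prod_{l=1}^{d}e_l^{j_l},
\]
which matches term-by-term the limiting coefficients coming from $f_b$ (the upper indices $N$ vs $d$ in the inner sum are harmless, since $e_l=0$ for $l>d$ and we sum only over $b\leq N$). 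Using $\prod_{r=1}^{d}(H+rz)^{N+1}=(d!)^{N+1}z^{d(N+1)}\prod_{r=1}^{d}(1+H/(rz))^{N+1}$ then yields
\[
\gamma\bigl(\lim_{t\to 0}B_{q^t,z}(Q)\bigr)=\sum_{d\geq 0}\frac{Q^{d}}{\prod_{r=1}^{d}(H+rz)^{N+1}}\pmod{H^{N+1}}.
\]

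Finally, multiplying the two limits gives $\gamma(\lim(A\cdot B))=Q^{H/z}\sum_{d\geq 0}\frac{Q^{d}}{\prod_{r=1}^{d}(H+rz)^{N+1}}=J^{\textnormal{coh}}(z,Q)$, as required. The only delicate part of the argument is the combinatorial matching in step three: one has to be careful with the sign $(-1)^k$, the multinomial $\frac{(N+k)!}{N!j_1!\cdots j_N!}$, and the exponent matching $\sum l j_l=b$, $\sum j_l=k$. Everything else reduces to the limits already established in Proposition \ref{qde:prop_specfns_limits} and to the structure of the truncated ring $K(\mathbb{P}^N)\otimes\mathbb{C}$.
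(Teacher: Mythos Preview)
Your proof is correct and follows the same overall strategy as the paper: compute the $q\to 1$ limit of each coefficient in the basis $(1-P^{-1})^i$ and match with the expansion of $J^{\textnormal{coh}}$ in the basis $H^i$. The organization differs in two useful ways. First, where the paper treats each $i$-th coefficient separately and compares the result with a stated decomposition of $J^{\textnormal{coh}}$ (Equation~\ref{qkqde:eqn_jh_decomposition}), you exploit the truncated-ring structure to factor the whole expression as a product $A_q\cdot B_{q,z}$ and then pass to the limit in each factor; this makes the identification of the limit as $Q^{H/z}\cdot\sum_{d\geq 0}Q^d/\prod_{r=1}^d(H+rz)^{N+1}$ transparent. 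Second, you actually supply the combinatorial verification---via $(1+S)^{-(N+1)}=\sum_k(-1)^k\binom{N+k}{k}S^k$ and the multinomial expansion of $S^k$---that the limiting coefficients coming from $f_b$ coincide with the $H^b$-coefficients of $\prod_r(H+rz)^{-(N+1)}$; the paper simply asserts the analogous decomposition of $J^{\textnormal{coh}}$ into the functions $g_b$ without proof. Both approaches rest on the same two limits (Proposition~\ref{qde:prop_specfns_limits} for the $q$-logarithm part, and $(1-q)/(1-q^r)\to 1/r$ for the $q$-Pochhammer part), so the difference is packaging rather than substance.
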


\begin{proof}
    Using the characterisation of the change of fundamental solution of Proposition \ref{qkqde:prop_JK_confluence_sol}, we have to compute the limits of the terms for all $i \in \{0, \dots, N\}$
    \[
        \sum_{\substack{
                0 \leq a, b \leq N  \\  a + b = i
            }}
        \left( \frac{q-1}{z} \right)^a 
        \binom{\ell_{q} \left( Q \right)}{a}
        \left( \frac{1-q}{z} \right)^b f_b \left(
            q,\left( \frac{1-q}{z}\right)^{N+1} Q
        \right)
    \]
    The decomposition of the cohomological $J$-function in the basis $(1,H,\dots,H^N)$ is given by
    \begin{equation}\label{qkqde:eqn_jh_decomposition}
        J^\textnormal{coh}(z,Q) =
        \sum_{i=0}^N H^i
        \left[
            \sum_{\substack{
                0 \leq a, b \leq N  \\  a + b = i
            }}
            \frac{1}{a !} \left( \frac{\log(Q)}{z} \right)^a
            g_b(z,Q)
        \right],
    \end{equation}
    where
    \begin{align*}
        g_b(z,Q)
        &=
        \sum_{d \geq 0} \frac{Q^d}{\left( z^d d!\right)^{N+1}}
        \times
        \\
        &\times
        \left(
            \frac{1}{z^b}
                \sum_{k=0}^N \sum_{\substack{
            0 \leq j_1,\dots,j_N \leq N \\  j_1 + \cdots + j_N = k \\ j_1 + 2 j_2 + \cdots + N j_N = b
            }}
            (-1)^k \frac{(N+k)!}{N! j_1! \cdots j_N!}
                \prod_{l=1}^N \left(
                \sum_{1 \leq m_1 < \cdots < m_l \leq d} \frac{1}{m_1 \cdots m_l)}
            \right)^{j_l}
        \right).
    \end{align*}
    We observe that
    \[
        \lim_{t \to 0}
        \left( \frac{1-q^t}{z} \right)^b f_b \left(
            q^t,\left( \frac{1-q^t}{z}\right)^{N+1} Q
        \right)
        =
        g_b(z,Q).
    \]
    Using Proposition \ref{qde:prop_specfns_limits}, we also have that
    \[
        \lim_{t \to 0}
        \left( \frac{q^t-1}{z} \right)^a 
        \binom{\ell_{q^t} \left( Q \right)}{a}
        =
        \frac{1}{a !} \left( \frac{\log(Q)}{z} \right)^a.
    \]
    Using these two limits, we obtain that
    \[
        \lim_{t \to 0} P_{q^t,z} \cdot \varphi_{q^t,z}^* J^{K\textnormal{th}}(q^t,Q)
        =
        \sum_{i=0}^N \left( 1-P^{-1}\right)^i
        \left[
            \sum_{\substack{
                0 \leq a, b \leq N  \\  a + b = i
            }}
            \frac{1}{a !} \left( \frac{\log(Q)}{z} \right)^a
            g_b(z,Q)
        \right].
    \]
    Applying $\gamma$ and comparing with Equation (\ref{qkqde:eqn_jh_decomposition}), we find the desired result.
\end{proof}
\section{$q$-monodromy of the $q$-difference equation of projective spaces}\label{section:qmonodromy}

The goal of this section is to compute some monodromy data for the $q$-difference equation satisfied by the small $K$-theoretical $J$-function of projective spaces.
This monodromy data takes the form of a connection matrix, computing the base change from the $J$-function (fundamental solution at $Q=0$) to a fundamental solution at $Q=\infty$.

Before starting, let us mention some references on $q$-monodromy. A treatment of the regular singular case can be found in \cite{Har_Sau_Sin_book}, the end result being Theorem 3.4.9 p.134. For some irregular case, we can refer to \cite{Dreyfus_confluence_sing_irreg} in general, \cite{Adachi_borel_sum} for irregular (unilateral) $q$-hypergeometric series, and \cite{Wen:qkqde_quinticthreefold} for the $q$-difference equation associated to Fermat's quintic threefold.

\subsection{Fundamental solution at $\infty$}

In this Subsection, we begin by constructing a fundamental solution of the $q$-difference equation \eqref{qkqde:eqn_JK_eq} at $Q=\infty$.
This solution is built by looking for a formal solution to the $q$-difference equation, then using a $q$-analogue of the Borel--Laplace transform to obtain an analytic solution.

\subsubsection{Formal solution}

We denote by $w=Q^{-1}$ our coordinate at $Q=\infty$.
Notice that if $f,g$ are two complex functions so that $g(w) = f(1/w) = f(Q)$, then $\qdeop{Q} f = \left( \qdeop{w} \right)^{-1} g$.
Therefore, the $q$-difference equation $(\Delta_q)$ in the new local coordinate $w$ becomes
\begin{equation}\label{stokes:eqn_qde_infinity}
	\left[
		(-1)^{N+1} q^{N+1} \Lambda_0 \cdots \Lambda_N w
		\left( 1 - \Lambda_0^{-1} \qdeop{w} \right) \cdots \left( 1 - \Lambda_N^{-1} \qdeop{w} \right)
		- \left( \qdeop{w} \right)^{N+1}
	\right]
	g_q(w) = 0.
\end{equation}

\begin{notation}
	We recall some $q$-analogues of hypergeometric functions.
	Let $r,s \in \mathbb{Z}_{\geq 0}$ and $a_1, \dots, a_r, b_1, \dots, b_s \in \mathbb{C}$. The notation for a (unilateral) $q$-hypergeometric series is
	\[
        \tensor[_{r}]{\varphi}{_s} \left(
            \begin{gathered}
                a_1 \quad \cdots \quad a_r \\
                b_1 \quad \cdots \quad b_s
            \end{gathered}
            \middle| q, z
        \right)
        =
        \sum_{d \geq 0}
        \frac{\left(a_1,\dots,a_r ;q\right)_d}{\left(q,b_1,\dots,b_s ;q\right)_d}
        \left(
            (-1)^d q^{\frac{d(d-1)}{2}}
        \right)^{1+s-r}
        z^d.
	\]
	Keeping the same notations, we define bilateral $q$-hypergeometric series by
	\[
		\tensor[_{r}]{\psi}{_s} \left(
            \begin{gathered}
                a_1 \quad \cdots \quad a_r \\
                b_1 \quad \cdots \quad b_s
            \end{gathered}
            \middle| q, w
        \right)
        =
        \sum_{d \in \mathbb{Z}}
        \frac{(a_1, \dots, a_r;q)_d}{(b_1, \dots, b_s;q)_d}
        \left(
        	(-1)^d q^\frac{d(d-1)}{2}
        \right)^{s-r}
        w^d.
	\]
\end{notation}

Let $\alpha \in \mathbb{C}^*$. We will use the following ansatz ton construct our fundamental solution at $w=0$.

\begin{lemma}\label{stokes:lemma_formal_fund_sol_ansatz_qde}
	Let $h_q$ be a complex function and set $g_q(w) := e_{q,\alpha^{-1} P}(w) h_q(w)$.
	The function $g_q$ is a solution of the $q$-difference equation (\ref{stokes:eqn_qde_infinity}) if and only if the function $h_q$ is a solution of the following $q$-difference equation:
	\begin{equation}\label{stokes:eqn_qde_laurent_series}
		\left[
			(-1)^{N+1}
			q^{N+1} w \Lambda_0 \cdots \Lambda_N
			\prod_{i=0}^N \left(
				1 - \Lambda_i^{-1} \alpha^{-1} P \qdeop{w}
			\right)
			-
			\left(
				\alpha^{-1} P
			\right)^{N+1}
			\left(
				\qdeop{w}
			\right)^{N+1}
		\right]
		h_q(w)
		=
		0
	\end{equation} 
\end{lemma}

\begin{proof}
	Assume the function $g_q$ is a solution of the $q$-difference equation (\ref{stokes:eqn_qde_infinity}).
	The functions $g_q, h_q$ are related by the relation $g_q(w) = e_{\alpha^{-1} P}(w) h_q(w)$. Therefore,
	\[
		\qdeop{w} g_q(w)
		=
		e_{\alpha^{-1} P}(w) \alpha^{-1} P \qdeop{w} h_q(w).
	\]
	Thus, when $q$-difference operator in \eqref{stokes:eqn_qde_infinity} to $g_q$, we obtain
	\begin{align*}
		&\left[
			(-1)^{N+1} q^{N+1} \Lambda_0 \cdots \Lambda_N w
			\left( 1 - \Lambda_0^{-1} \qdeop{w} \right) \cdots \left( 1 - \Lambda_N^{-1} \qdeop{w} \right)
			- \left( \qdeop{w} \right)^{N+1}
		\right]
    	g_q(w)
    	\\
    	=
		&e_{\alpha^{-1} P}(w)
		\left[
			(-1)^{N+1}
			q^{N+1} w \Lambda_0 \cdots \Lambda_N
			\prod_{i=0}^N \left(
				1 - \Lambda_i^{-1} \alpha^{-1} P \qdeop{w}
			\right)
			-
			\left(
				\alpha^{-1} P
			\right)^{N+1}
			\left(
				\qdeop{w}
			\right)^{N+1}
		\right]
		h_q(w),
	\end{align*}
	which is zero by assumption that $g_q$ is a solution of the $q$-difference equation (\ref{stokes:eqn_qde_infinity}).
\end{proof}

\begin{lemma}\label{stokes:lemma_ansatz_qde_solution}
	The $q$-difference equation (\ref{stokes:eqn_qde_laurent_series}) of the previous lemma admits as a solution the following formal Laurent series
	\begin{equation}\label{stokes:eqn_laurent_series_solution}
		h_q(w)
		=
		\tensor[_{N+1}]{\psi}{_0} \left(
            \begin{gathered}
                \Lambda_0^{-1} \alpha^{-1} P \quad \cdots \quad \Lambda_N^{-1} \alpha^{-1} P \\
                -
            \end{gathered}
            \quad \middle| \quad
            q, \left( \alpha P^{-1} \right)^{N+1} \Lambda_0 \cdots \Lambda_N w
        \right)
	\end{equation}
\end{lemma}

\begin{remark}
	Since $|q|<1$, applying the ratio test to the positive part of the Laurent series (\ref{stokes:eqn_laurent_series_solution}) shows that its convergence ray is 0.
	The negative part has convergence ray $\infty$.
\end{remark}

\begin{proof} 
	We look for a formal Laurent series solution to the $q$-difference equation (\ref{stokes:eqn_qde_laurent_series}).
	Let the Laurent series in the input $w$
	\[
		h_q(w) = \sum_{d \in \mathbb{Z}} h_d(q) w^d
	\]
	Let's assume the Laurent series $h_q$ is a solution of the $q$-difference equation (\ref{stokes:eqn_qde_laurent_series}). Then,
	\[
		\sum_{d \in \mathbb{Z}} \left( \alpha^{-1} P \right)^{N+1} q^{d(N+1)} h_d w^d
		=
		\sum_{d \in \mathbb{Z}} (-1)^{N+1} q^{N+1} 
		\left[
			\prod_{i=0}^N \Lambda_i \left(1- \alpha^{-1} P \Lambda_i^{-1} q^d \right)
		\right]
		h_d w^{d+1}
	\]
	Identifying the coefficients in front of $w^{d+1}$, we get the following recursion relation satisfied by the family of coefficients $\left(h_d\right)_{d \in \mathbb{Z}}$.
	\[
		\left( \alpha^{-1} P \right)^{N+1} q^{(d+1)(N+1)} h_{d+1}
		=
		(-1)^{N+1} q^{N+1}
		\left[
			\prod_{i=0}^N \Lambda_i \left(1- \alpha^{-1} P \Lambda_i^{-1} q^d \right)
		\right]
		h_d
	\]
	Recall that the $q$-Pochhammer symbol $(a;q)_d$ is a solution of the recursion equation $(a;q)_{d+1} = (1-aq^d) (a;q)_d$.
	Therefore, we get a solution $h_d$ of the previous recursion equation given by
	\begin{align*}
		h_{d+1}
		=&
		(-1)^{(N+1)(d+1)}
		\left(
			q^{N+1}
		\right)^{-\frac{d(d+1)}{2}}
		\left(
			\Lambda_0^{-1} \alpha^{-1} P,
			\dots,
			\Lambda_N^{-1} \alpha^{-1} P
			;q
		\right)_{d+1} \times
		\\
		&\times \left(
			\left( \alpha P^{-1} \right)^{N+1} \Lambda_0 \cdots \Lambda_N
		\right)^{d+1}
		h_0,
	\end{align*}
	where $h_0 \in \mathbb{C}$. Setting $h_0=1$ produces a solution which is also the bilateral $q$-hypergeometric series given by Equation (\ref{stokes:eqn_laurent_series_solution})
\end{proof}

We can now give the formula for our fundamental solution in the proposition below.

\begin{prop}\label{stokes:prop_formal_fund_sol}
	Consider the $q$-difference equation (\ref{stokes:eqn_qde_infinity}), given by
	\[
		\left[
		(-1)^{N+1} q^{N+1} \Lambda_0 \cdots \Lambda_N w
		\left( 1 - \Lambda_0^{-1} \qdeop{w} \right) \cdots \left( 1 - \Lambda_N^{-1} \qdeop{w} \right)
		- \left( \qdeop{w} \right)^{N+1}
		\right]
		g_q(w) = 0
	\]
	Assume that $\alpha \in \mathbb{C}^*-q^\mathbb{Z}$ and that for any $i \neq j \in \{0,\dots,N\}$, $\Lambda_i \Lambda_j^{-1} \notin q^\mathbb{Z}$.
	Denote by $h_q(w)$ the formal Laurent series (\ref{stokes:eqn_laurent_series_solution}).
	Then, the $q$-difference equation (\ref{stokes:eqn_qde_infinity}) admits a basis of formal solutions given by, for $i \in \{0, \dots, N\}$,
	\begin{align*}
		g_i(w)
		&=
		\left(
			e_{q,\alpha^{-1}P}(w) h_q(w)
		\right)_{|P=\Lambda_i}
		\\
		&=
		e_{q,\alpha^{-1} \Lambda_i}
		\tensor[_{N+1}]{\psi}{_0} \left(
            \begin{gathered}
                \Lambda_0^{-1} \alpha^{-1} \Lambda_i \quad \cdots \quad \Lambda_N^{-1} \alpha^{-1} \Lambda_i \\
                -
            \end{gathered}
            \quad \middle| \quad
            q, \left( \alpha \Lambda_i^{-1} \right)^{N+1} \Lambda_0 \cdots \Lambda_N w
        \right).
	\end{align*}
\end{prop}

\begin{remark}
	Before giving a proof of this statement, we point out that if $\alpha \in q^\mathbb{Z}$, then there exists a $d \in \mathbb{Z}$ such that $(\alpha^{-1};q)_d=0$, and therefore the expression $g_i$ is either undefined or is not a solution of the $q$-difference equation. 
\end{remark}

\begin{proof}
	Let $i \in \{0, \dots, N\}$. Let us show that the function $g_i$ is a formal solution of the $q$-difference equation (\ref{stokes:eqn_qde_infinity}).
	By setting $P=\Lambda_i$ in the statement of Lemma \ref{stokes:lemma_formal_fund_sol_ansatz_qde}, we can construct one solution by solving the $q$-difference equation (\ref{stokes:eqn_qde_laurent_series}), having replaced $P$ by $\Lambda_i$.
	A formal solution of this new $q$-difference equation can be found in Lemma \ref{stokes:lemma_ansatz_qde_solution} after setting $P=\Lambda_i$, which is precisely the function $g_i$.

	Assuming the condition that for any $i \neq j \in \{0,\dots,N\}$, $\Lambda_i \Lambda_j^{-1} \notin q^\mathbb{Z}$, we obtain that the functions $\left(g_i\right)_{i \in \{0, \dots, N\}}$ are independent over the field of $q$-constants $\mathcal{M}\left( \mathbb{E}_q \right)$.
\end{proof}

\subsubsection{Analytic solution}

\begin{defin}
	Let $f(w) = \sum_{d \geq 0} f_d w^d \in \mathbb{C}[ \! [ w ] \! ]$ be a formal power series.
	The $q$-Borel transform of the formal power series $f$ is given by the expression
	\[
		\mathcal{B}_q f(\xi)
		:=
		\sum_{d \geq 0}
		f_d
		q^{\frac{d(d-1)}{2}}
		\xi^d
		\in
		\mathbb{C}[ \! [ \xi ] \! ].
	\]
\end{defin}

\begin{defin}[\cite{Dreyfus_Eloy:qBorelLaplace}, Definition 1.2]
	Let $[ \lambda; q ] \in \mathbb{C}^*/q^\mathbb{Z}$ be a discrete $q$-spiral and $f \in \mathcal{M}\left( \mathbb{C}^*, 0 \right)$ be a germ of a meromorphic function with essential singularity at 0.
	We say the function $g$ admits a $q$-Laplace transform along the $q$-spiral $[ \lambda; q ]$ if there exists a constant $\varepsilon > 0$ and an domain $\Omega \subset \mathbb{C}$ such that
	\begin{enumerate}[label=(\roman*)]
		\item
		The domain $\Omega$ contains the domain
		\[
			\bigcup_{m \in \mathbb{Z}}
			\left\{
				\xi \in \mathbb{C}^*
				\, , \,
				|\xi - \lambda q^m|
				<
				\varepsilon | \lambda q^m|
			\right\}
			\subset
			\Omega.
		\]

		\item
		The function $g$ admits an analytic continuation $\bar{g}$ on the domain $\Omega$. Furthermore, we ask that there exists constants $C_1, C_2 > 0$ such that $\bar{f}$ satisfies the bound
		\[
			\left|\bar{f}(\xi)\right|
			<
			C_1
			\theta_{|q|}
			\left(
				C_2 |\xi|
			\right).
		\]
		A function satisfying such a bound will be said to have $q$-exponential growth at $\infty$.
	\end{enumerate}
	We will denote by $\mathcal{H}_q^{[ \lambda; q ]}$ the space of functions satisfying the conditions (i) and (ii).
\end{defin}

\begin{remark}
	Notice that the definitions we gave so far are concerned with power series, while the formal fundamental solution built in Proposition \ref{stokes:prop_formal_fund_sol} is a Laurent series.
	We formally extend the definition of the $q$-Laplace transform to Laurent series by setting
	\[
		\mathcal{B}_q \left(
			\sum_{d \in \mathbb{Z}} f_d w^d
		\right)
		(\xi)
		=
		\sum_{d \in \mathbb{Z}}
		f_d
		q^{\frac{d(d-1)}{2}}
		\xi^d
		\in
		\mathbb{C}[ \! [ \xi^{\pm 1} ] \! ].
	\]
	By doing so, there is a chance that the negative powers part of the Laurent series is no longer convergent, but it still is in the case of our fundamental solution.
\end{remark}

\begin{defin}
	Let $g \in \mathcal{H}_q^{[ \lambda; q ]}$ be a function admitting a $q$-Laplace transform along the $q$-spiral $[ \lambda; q ]$.
	We defined the $q$-Laplace transform of the function $f$ by the expression
	\[
		\mathcal{L}_q^{[ \lambda; q ]}g(w)
		:=
		\sum_{m \in \mathbb{Z}}
		\frac{
			g(\lambda q^m)
		}{
			\theta_q
			\left(
				\frac{\lambda q^m}{w}
			\right)
		}
		\in
		\mathcal{M} \left( \mathbb{C}^*, 0 \right).
	\]
\end{defin}

We will now define a $q$-Borel--Laplace sum. 

\begin{prop}[\cite{Dreyfus_Eloy:qBorelLaplace}, Lemma 1.5 and 1.7]
	Consider a convergent power series $f \in \mathbb{C}\{w\}$ and a $q$-spiral $[ \lambda; q ] \in \mathbb{C}/q^\mathbb{Z}$, then
	\[
		\mathcal{L}_q^{[ \lambda; q ]} \mathcal{B}_q f(w) = f(w).
	\]
\end{prop}

Note that this formula extends to formal Laurent series, as the recursion strategy used in the proof of Lemma 1.7 in \cite{Dreyfus_Eloy:qBorelLaplace} can be used to prove that for a fixed $l \in \mathbb{Z}$ and every $a \in \mathbb{C}$, $\mathcal{L}_q^{[ \lambda; q ]} \mathcal{B}_q aw^{l+1} = a w^{l+1}$, then $\mathcal{L}_q^{[ \lambda; q ]} \mathcal{B}_q aw^{l} = a w^{l}$ also holds.
Indeed, a computation gives the formulas
\begin{align*}
	&\mathcal{L}_q^{[ \lambda; q ]} \left( \xi  \mathcal{B}_q(f)(\xi) \right)
	=
	w q^{-w \partial w} \mathcal{L}_q^{[ \lambda; q ]} \mathcal{B}_q f(w)
	=
	\mathcal{L}_q^{[ \lambda; q ]} \mathcal{B}_q \left(	wq^{-w \partial_w} f(w) \right),
	\\
	&\mathcal{L}_q^{[ \lambda; q ]} \left( \xi^{-1}  \mathcal{B}_q(f)(\xi) \right)
	=
	q^{w \partial w} \left( w^{-1} \mathcal{L}_q^{[ \lambda; q ]} \mathcal{B}_q f(w) \right)
	=
	\mathcal{L}_q^{[ \lambda; q ]} \mathcal{B}_q \left(	q^{w \partial_w} \left(w^{-1} f(w)\right) \right).
\end{align*}

\begin{defin}\label{stokes:def_qborel_laplace}
	Let $f \in \mathbb{C}[\![w^{\pm 1}]\!]$ be a formal series.
	We say that the function $f$ is $q$-Borel--Laplace summable along the $q$-spiral $[ \lambda; q ]$ if it satisfies the condition
	\[
		\mathcal{B}_q f \in \mathcal{H}_q^{[ \lambda; q ]}
	\]
	For such a function, we define its $q$-Borel--Laplace resummation to be the function defined by
	\[
		\mathcal{S}_q^{[ \lambda; q ]}f(w)
		:=
		\left(
			\mathcal{L}_q^{[ \lambda; q ]} \mathcal{B}_q f
		\right)
		(w)
	\]
\end{defin}

\begin{prop}\label{stokes:prop_analytic_fund_sol}
	Let $\left(g_i \right)$ be the basis of formal fundamental solution of the $q$-difference equation (\ref{stokes:eqn_qde_infinity}) constructed in Proposition \ref{stokes:prop_formal_fund_sol}, and let $\mathcal{S}_q^{[\lambda;q]}$ denote the $q$-Borel--Laplace transform defined in Definition \ref{stokes:def_qborel_laplace}.
	Then, the family $\left( \mathcal{S}_{q^{N+1}}^{\left[\lambda;q^{N+1}\right]} g_i \right)_i$ is a fundamental solution of the $q$-difference (\ref{stokes:eqn_qde_infinity}). 
\end{prop}

	To prove such a statement, one has to check $q^{N+1}$-resummability of the bilateral $q$-series (\ref{stokes:eqn_qde_laurent_series}).
	This relies on an analytical continuation of the $q^{N+1}$-Borel transform, which is given in the coming Corollary \ref{stokes:coro_borel_an_continuation}.
	The proof that this analytical continuation has $q^{N+1}$-exponential growth along a domain $\Omega$ is exactly the same as in the case of unilateral $q$-hypergeometric series, which is given by \cite{Adachi_borel_sum}, Theorem 3.1.
	Indeed, one can check the analytical continuation in both cases can be written under the form, with $C_j, A_j \in \mathbb{C}$
	\[
		\bar{g}(\xi)
		= \sum_j C_j \frac{\theta_q\left(a_j \xi\right)}{\theta_q(\xi)}
		\tensor[_{s}]{\varphi}{_r} \left(
            \begin{gathered}
                c_1 \quad \cdots \quad c_s \\
                d_1 \quad \cdots \quad d_r
            \end{gathered}
            \middle| q, A_j \frac{1}{w}
        \right);
	\]
	for which Adachi's arguments contained in Section 5 of \cite{Adachi_borel_sum} apply identically.

\subsection{Connection numbers for quantum $K$-theory of projective spaces}

We will now compute a base change formula between the $J$-function and the fundamental solution at $Q=\infty$ built in Proposition \ref{stokes:prop_analytic_fund_sol}.

\begin{notation}
	Let $\underline{a} := (a_1, \dots, a_r) \in \mathbb{C}^r$ be a multi-index. For $d \in \mathbb{Z} \cup \{\infty\}$, $j \in \{1, \dots, r\}$ and $\gamma \in \mathbb{C}$ we will use the following notations:
	\begin{align*}
		(\underline{a};q)_d &:= (a_1, \dots, a_r;q)_d, 	\\
		(\gamma \underline{a};q)_d &:= (\gamma a_1, \dots, \gamma a_r;q)_d, 	\\
		\underline{a^{-1}} &:= (a_1^{-1}, \dots, a_r^{-1}), 	\\
		\widehat{\underline{a}}^{(j)} &:= (a_1, \dots, a_{j-1}, a_{j+1}, \dots, a_r) \in \mathbb{C}^{r-1}, 	\\
		\pi(\underline{a}) &:= a_1 \cdots a_r.
	\end{align*}
\end{notation}

Our goal is to prove the following computation.

%
%

\begin{thm}\label{stokes:thm_connection_numbers_for_qk}
	Write $\underline{\Lambda}:=\left( \Lambda_0, \dots, \Lambda_N \right)$.
	Let $\alpha \in \mathbb{C}^*-q^\mathbb{Z}$ and let $\left[ \lambda; q^{N+1} \right]$ be a $q^{N+1}$-spiral.
	Denote by $\left(g_k^{\left[ \lambda; q^{N+1} \right]}\right)_{k \in \{0,\dots,N\}}$ the fundamental solution of the $q$-difference equation for quantum $K$-theory at $\infty$ given by Proposition \ref{stokes:prop_analytic_fund_sol}:
	\[
		g_k(w)
		=
		\left\{
			e_{q,\alpha^{-1}P}(w)
		\left[
			\mathcal{S}^{\left[ \lambda; q^{N+1} \right]}_{q^{N+1}}
			\tensor[_{N+1}]{\psi}{_0} \left(
	            \begin{gathered}
	                \alpha^{-1} P \underline{\Lambda^{-1}}
	                \\
	                -
	            \end{gathered}
	            \, \middle| \,
	            q, \left( \alpha P^{-1} \right)^{N+1} \pi(\underline{\Lambda}) w
	        \right)
	    \right](w)
		\right\}_{|P=\Lambda_k}.
	\]
	Then, this fundamental solution at $\infty$ can by expressed with the fundamental solution at $0$ given by the small $J$-function as in the following identity.
	\[
		g_k^{\left[ \lambda; q^{N+1} \right]}(w)
		=
		\sum_{j=0
		}^N
		R_{k,j}^{\left[ \lambda; q^{N+1} \right]}(q,w)
		J^{K\textnormal{th, eq}}_{|P = \Lambda_j}\left(q,\frac{1}{w}\right),
	\]
	where $R_{k,j}^{\left[ \lambda; q^{N+1} \right]}$ is the $q^{N+1}$-constant function given by
	\begin{align*}
		R_{k,j}^{\left[ \lambda; q^{N+1} \right]}(q,w)
		=
        &\frac{
        	\left(
        		q, \frac{
        			\alpha^{-1} \Lambda_k
        		}
        		{
        			\widehat{\underline{\Lambda}}^j
        		} ; q
        	\right)_\infty
        }
        {
        	\left(
        		q \alpha \Lambda_k^{-1} \Lambda_j, \frac{\Lambda_j}{\widehat{\underline{\Lambda}}^j} ; q
        	\right)_\infty
        }
        \frac
       	{
	       	\theta_q \left
	       		((-1)^N \frac{\lambda \alpha^{-1} \Lambda_k}{\Lambda_j}
	       	\right)
       	}
       	{
	       	\theta_q \left(
	       		(-1)^N \lambda
	       	\right)
        }
        \frac
       	{
	       	\theta_{q^{N+1}} \left(
	       		\frac{
	       			\lambda \Lambda_j^{N+1}
	       		}
	       		{
	       			\pi(\underline{\Lambda}) w
	       		}
	       	\right)
       	}
       	{
	       	\theta_{q^{N+1}} \left(
	       		\frac{
	       			\lambda
	       		}
	       		{
	       			\left( \alpha \Lambda_k^{-1} \right)^{N+1} \pi(\underline{\Lambda}) w
	       		}
	       	\right)
        }
        \times
        \\
        &\times
        e_{q,\alpha^{-1}\Lambda_k}(w)
        \Lambda_j^{\ell_q\left( \frac{1}{w} \right)}
	\end{align*}
\end{thm}

Our strategy to prove this theorem will be the same as the one found in \cite{Adachi_borel_sum}: we start from a connection number for a regular singular bilateral $q$-hypergeometric series, identify some limit of these connection numbers as an identity between $q$-Borel transforms, then apply a $q$-Laplace transform to the identity.

%
%

\begin{prop}[\cite{Slater_hypergeometric}, Equation 5.2.4 p.165; see also \cite{Chan_Sears_stuff}, Theorem 2.1]\label{stokes:lemma_sears_connection_formula}
	Let $\underline{a}, \underline{b} \in \mathbb{C}^r$.
	Assuming the following series are finite sums, or assuming $\left| \frac{\pi(\underline{b})}{\pi(\underline{a})} \right| < |z| < 1$,
	\begin{align*}
		\tensor[_{r}]{\psi}{_r} \left(
            \begin{gathered}
                \underline{a} \\
                \underline{b}
            \end{gathered}
            \,\middle|\, q, w
        \right)
        =
        \sum_{j=1}^r
        C_j(q)
        \frac
        	{
	        	\left(
	        		a_j w,\frac{q}{a_j w} ; q
	        	\right)_\infty
        	}
        	{
	        	\left(
	        		w,\frac{q}{w} ; q
	        	\right)_\infty
        	}
        \tensor[_{r}]{\varphi}{_{r-1}} \left(
            \begin{gathered}
                \frac{a_j q}{\underline{b}} \\
                \frac{a_j q}{\widehat{\underline{a}}^j}
            \end{gathered}
            \,\middle|\, q, \frac{\pi(\underline{b})}{\pi(\underline{a})w}
        \right),
	\end{align*}
	where
	\begin{align*}
		C_j(q)
		&:=
		\frac
		{
			\left(
				q, \widehat{\underline{a}}^j, \frac{\underline{b}}{a_j} ; q
			\right)_\infty
		}
		{
			\left(
				\frac{q}{a_j}, \frac{\widehat{\underline{a}}^j}{a_j},\underline{b} ; q
			\right)_\infty
		}
		\in \mathbb{C},
		\\
		\frac{a_j q}{\underline{b}}
		&:=
		\left(
			\frac{a_j q}{b_1}, \dots, \frac{a_j q}{b_r}
		\right)
		\in \mathbb{C}^r.
	\end{align*}
\end{prop}

\begin{remark}
	We have
	\[
		\frac
        	{
	        	\left(
	        		a_jz,\frac{q}{a_j z} ; q
	        	\right)_\infty
        	}
        	{
	        	\left(
	        		z,\frac{q}{z} ; q
	        	\right)_\infty
        }
        =
        \frac{\theta_q(-a_j z)}{\theta_q(-z)}
	\]
	Over the field of $q$-constants $\mathcal{M}\left( \mathbb{E}_q \right)$, the corresponding function is linearly equivalent to the function given by $a_j^{-\ell_q(z)}$.
\end{remark}

%
%

By taking the limit $\underline{b} \to \underline{0}$ in the identity of Proposition \ref{stokes:lemma_sears_connection_formula}, we obtain the following corollary.

\begin{coro}\label{stokes:coro_borel_an_continuation}
	We have the following identity of analytic functions
	\[
		\tensor[_{r}]{\psi}{_r} \left(
            \begin{gathered}
                \underline{a} \\
                \underline{0}
            \end{gathered}
            \,\middle|\, q, w
        \right)
        =
        \sum_{j=1}^r
        C_j'(q)
        \frac
        	{
	        	\theta_q(-a_j w)
        	}
        	{
	        	\theta_q(-w)
        	}
        \tensor[_{0}]{\varphi}{_{r-1}} \left(
            \begin{gathered}
                - \\
                \frac{a_j q}{\widehat{\underline{a}}^j}
            \end{gathered}
            \,\middle|\, q, \frac{q^r a_j^{r-1}}{\pi(\widehat{\underline{a}}^j)w}
        \right),
	\]
	where
	\[
		C'_j(q)
		:=
		\left(
			q, \widehat{\underline{a}}^j;q
		\right)_\infty
		\left(
			\frac{q}{a_j}, \frac{\widehat{\underline{a}}^j}{a_j};q
		\right)_\infty^{-1}
	\]
\end{coro}

\begin{remark}
	The motivation for this using this corollary is the observation that, denoting $\mathcal{B}_q$ the $q$-Borel transform,
	\[
		\mathcal{B}_{q^r}
		\tensor[_{r}]{\psi}{_0} \left(
            \begin{gathered}
                \underline{a} \\
                -
            \end{gathered}
            \,\middle|\, q, w
        \right)
        =
        \tensor[_{r}]{\psi}{_r} \left(
            \begin{gathered}
                \underline{a} \\
                \underline{0}
            \end{gathered}
            \,\middle|\, q, (-1)^rw
        \right)
	\]
	We also notice that the statement of this corollary does not make sense if we were to set $\underline{a} = \underline{1}$, e.g. if we were doing equivariant limit in equivariant quantum $K$-theory.
\end{remark}

\begin{proof}
	We have $\lim_{b_i \to 0} C_j(q) = C_j'(q)$ by the convention $(0;q)_d = 1$.
	The remaining computation relies on the observation that
	\[
		\lim_{b_i \to 0}
		\left(
			\frac{a_j q}{b_i} ; q
		\right)_d
		b_i^d
		=
		\lim_{b_i \to 0}
		\prod_{l=1}^d \left( b_i - a_j q^l \right)
		=
		(-1)^d a_j^d q^{\frac{d(d+1)}{2}}
	\]
	Therefore, we have
	\begin{align*}
		\tensor[_{r}]{\varphi}{_{r-1}} \left(
            \begin{gathered}
                \frac{a_j q}{\underline{b}} \\
                \frac{a_j q}{\widehat{\underline{a}}^j}
            \end{gathered}
            \,\middle|\, q, \frac{\pi(\underline{b})}{\pi(\underline{a})w}
        \right)
        &=
        \sum_{d \geq 0}
        \frac{
        	\left(
        		\frac{a_j q}{\underline{b}};q
        	\right)_d
        	\pi(\underline{b})^d
        }
        {
        	\left(
        		\frac{a_j q}{\widehat{\underline{a}}^j};q
        	\right)_d
        }
        \left(
        	\frac{1}{\pi(\underline{a})^dz^d}
        \right)
        \\
        &\to
        \sum_{d \geq 0}
        \frac{
        	(-1)^{dr} a_j^{dr} q^{r \frac{d(d-1)}{2}} q^{dr}
        }
        {
        	\left(
        		\frac{a_j q}{\widehat{\underline{a}}^j};q
        	\right)_d
        }
        \left(
        	\frac{1}{\pi(\underline{a})^dz^d}
        \right)
        \\
        &=
        \sum_{d \geq 0}
        \frac{
        	1
        }
        {
        	\left(
        		\frac{a_j q}{\widehat{\underline{a}}^j};q
        	\right)_d
        }
        \left(
        	(-1)^d q^{\frac{d(d-1)}{2}}
        \right)^r
        \left(
        	\frac{q^r a_j^{r-1}}{\pi(\widehat{\underline{a}}^j)z}
        \right)^d
        \\
        &=
        \tensor[_{0}]{\varphi}{_{r-1}} \left(
            \begin{gathered}
                - \\
                \frac{a_j q}{\widehat{\underline{a}}^j}
            \end{gathered}
            \,\middle|\, q, \frac{q^r a_j^{r-1}}{\pi(\widehat{\underline{a}}^j)w}
        \right)
	\end{align*}
\end{proof}

%
%

\begin{coro}\label{stokes:coro_transformation_formula}
	Let $\mathcal{L}_{q^r}^{[\lambda;q^r]}$ be the $q^r$-Laplace transform along the $q$-spiral $[\lambda;q^r]$.
	We have the following identity of analytic functions
	\begin{align*}
		&\left[
			\mathcal{L}_{q^r}^{[\lambda;q^r]} \tensor[_{r}]{\psi}{_r} \left(
	            \begin{gathered}
	                \underline{a} \\
	                \underline{0}
	            \end{gathered}
	            \,\middle|\, q, (-1)^r w
	        \right)
	    \right]
	    (x)
	    \\
        &=
        \sum_{j=1}^r
        C_j'(q)
        \frac
        	{
	        	\theta_q \left(
	        		(-1)^{r+1} a_j \lambda
	        	\right)
        	}
        	{
	        	\theta_q \left(
	        		(-1)^{r+1} \lambda
	        	\right)
        	}
        	\frac
        	{
	        	\theta_{q^r} \left(
	        		\frac{\lambda}{a_j^r x}
	        	\right)
        	}
        	{
	        	\theta_{q^r} \left(
	        		\frac{\lambda}{x}
	        	\right)
        	}
        \tensor[_{r}]{\varphi}{_{r-1}} \left(
            \begin{gathered}
                \underline{0} \\
                \frac{a_j q}{\widehat{\underline{a}}^j}
            \end{gathered}
            \,\middle|\, q, \frac{1}{\pi(\underline{a})x}
        \right)
	\end{align*}
\end{coro}

We recall that the $q^r$-Laplace transform along the $q$-spiral $[\lambda;q^r]$ of a function $g$ is given by
\[
	\left[
		\mathcal{L}_{q^r}^{ [ \lambda ; q^r ] } g
	\right]
	(x)
	:=
	\sum_{n \in \mathbb{Z}}
	\frac{
		g(\lambda q^{rn})
	}
	{
		\theta_{q^r}\left({\frac{\lambda q^{rn}}{x}}\right)
	}
\]
The main idea of the proof of the corollary is to make a change of variable for the summation on the index $n \in \mathbb{Z}$ to make the Laurent series of the function $\theta_{q^r}$ appear.

\begin{proof}
	In the expression for the $q^r$-Laplace transform of the right hand side of Corollary \ref{stokes:coro_transformation_formula},
	we use the identity (deduced from the $q$-difference equation satisfied by the theta function)
	\[
		\theta_q \left(q^r x \right)
		=
		\frac{1}
		{
			q^{\frac{r(r-1)}{2}} x^r
		}
		\theta_q(x)
	\]
	We therefore obtain
	\begin{align*}
		&\left[
			\mathcal{L}_{q^r}^{[\lambda;q^r]} \tensor[_{r}]{\psi}{_r} \left(
	            \begin{gathered}
	                \underline{a} \\
	                \underline{0}
	            \end{gathered}
	            \,\middle|\, q, (-1)^{r+1} w
	        \right)
	    \right]
	    (x)
        \\
        &=
        \sum_{j=1}^r \sum_{n \in \mathbb{Z}}
        C'_j(q)
        \frac{1}{a_j^{rn}}
        \frac{
        	\theta_q \left(
        		(-1)^{r+1} a_j \lambda
        	\right)
        }
        {
        	\theta_q \left(
        		(-1)^{r+1} \lambda
        	\right)
        }
        \left(
        	\frac{\lambda}{x}
        \right)^{n}
        q^{r \frac{n(n-1)}{2}}
        \frac{1}{
        	\theta_{q^r} \left(
        		\frac{\lambda}{x}
        	\right)
        }
        \times
        \\
        &\times
        \sum_{d \geq 0}
        \frac{1}{
        	\left(
        		\frac{a_j q}{\widehat{\underline{a}}^j} ; q
        	\right)_d
        }
        q^{r \frac{d(d-1)}{2}}
        \frac{q^{rd}}{q^{drn}}
        \frac{
        	a_j^{(r-1)d}
        }
        {
        	\pi \left(
        		\widehat{\underline{a}}^j
        	\right)^d
        	\lambda^d
        }
	\end{align*}
	Multiplying all the terms of the form $q^{( \textnormal{exponent})}$ together, we obtain
	\[
		q^{
		r \frac{n(n-1)}{2}
		+
		r \frac{r(r-1)}{2}
		+
		dr
		-drn
		}
		=
		q^{r \frac{(n-d)(n-d-1)}{2}}
	\]
	Setting $n'=n-d$, we have
	\begin{align*}
		&\sum_{n \in \mathbb{Z}}
		\frac{1}{a_j^{rn}}
		\left(
        	\frac{\lambda}{x}
        \right)^{n}
		q^{r \frac{(n-d)(n-d-1)}{2}}
		\\
		&=
		\left(
			\sum_{n' \in \mathbb{Z}}
			\left(
				\frac{\lambda}{a_j^r x}
			\right)^{n'}
			q^{r \frac{(n')(n'-1)}{2}}
		\right)
		\left(
			\frac{\lambda}{a_j^r x}
		\right)^d
		\\
		&=
		\theta_{q^r}
		\left(
			\frac{\lambda}{a_j^r x}
		\right)
		\left(
			\frac{\lambda}{a_j^r x}
		\right)^d
	\end{align*}
	Therefore,
	\begin{align*}
		&\left[
			\mathcal{L}_{q^r}^{[\lambda;q^r]} \tensor[_{r}]{\psi}{_r} \left(
	            \begin{gathered}
	                \underline{a} \\
	                \underline{0}
	            \end{gathered}
	            \,\middle|\, q, (-1)^{r+1} w
	        \right)
	    \right]
	    (x)
        \\
        &=
        \sum_{j=1}^r
        C'_j(q)
        \frac{
        	\theta_q \left(
        		(-1)^{r+1} a_j \lambda
        	\right)
        }
        {
        	\theta_q \left(
        		(-1)^{r+1} \lambda
        	\right)
        }
        \frac{
        	\theta_{q^r}
			\left(
				\frac{\lambda}{a_j^r x}
			\right)
        }
        {
        	\theta_{q^r}
			\left(
				\frac{\lambda}{x}
			\right)
	    }
	    \sum_{d \geq 0}
	    \frac{1}{
        	\left(
        		\frac{a_j q}{\widehat{\underline{a}}^j} ; q
        	\right)_d
        }
        \left(
			\frac{\lambda}{a_j^r x}
		\right)^d
		\frac{
        	a_j^{(r-1)d}
        }
        {
        	\pi \left(
        		\widehat{\underline{a}}^j
        	\right)^d
        	\lambda^d
        }
	\end{align*}
	We conclude recognizing the $q$-hypergeometric series
	\[
		\sum_{d \geq 0}
	    \frac{1}{
        	\left(
        		\frac{a_j q}{\widehat{\underline{a}}^j} ; q
        	\right)_d
        }
        \left(
			\frac{\lambda}{a_j^r x}
		\right)^d
		\frac{
        	a_j^{(r-1)d}
        }
        {
        	\pi \left(
        		\widehat{\underline{a}}^j
        	\right)^d
        	\lambda^d
        }
        =
        \tensor[_{r}]{\varphi}{_{r-1}} \left(
            \begin{gathered}
                \underline{0} \\
                \frac{a_j q}{\widehat{\underline{a}}^j}
            \end{gathered}
            \,\middle|\, q, \frac{1}{\pi(\underline{a})x}
        \right)
	\]
\end{proof}

Applying this corollary to the case of quantum $K$-theory gives the following statement below.

\begin{coro}
	Write $\underline{\Lambda}:= (\Lambda_0, \dots, \Lambda_N) \in K_{T^{N+1}}\left( \mathbb{P}^N \right)^{N+1}$.
	Let $\alpha \in \mathbb{C}^*-q^\mathbb{Z}$ and let $\left[ \lambda; q^{N+1} \right]$ be a $q^{N+1}$-(discrete) spiral.
	We have the following identity of functions
	\begin{align*}
		&e_{q,\alpha^{-1}P}(w)
		\left[
			\mathcal{S}^{\left[ \lambda; q^{N+1} \right]}_{q^{N+1}}
			\tensor[_{N+1}]{\psi}{_0} \left(
	            \begin{gathered}
	                \underline{\Lambda}^{-1} \alpha^{-1} P
	                \\
	                -
	            \end{gathered}
	            \, \middle| \,
	            q, \left( \alpha P^{-1} \right)^{N+1} \pi(\underline{\Lambda}) w
	        \right)
	    \right](w)
        \\
        &=
        \sum_{j=0}^N
        \frac{
        	\left(
        		q, \frac{
        			\alpha^{-1}P
        		}
        		{
        			\widehat{\underline{\Lambda}}^j
        		} ; q
        	\right)_\infty
        }
        {
        	\left(
        		q \alpha P^{-1} \Lambda_j, \frac{\Lambda_j}{\widehat{\underline{\Lambda}}^j} ; q
        	\right)_\infty
        }
        \frac
       	{
	       	\theta_q \left
	       		((-1)^N \frac{\lambda \alpha^{-1} P}{\Lambda_j}
	       	\right)
       	}
       	{
	       	\theta_q \left(
	       		(-1)^N \lambda
	       	\right)
        }
        \frac
       	{
	       	\theta_{q^{N+1}} \left(
	       		\frac{
	       			\lambda \Lambda_j^{N+1}
	       		}
	       		{
	       			\pi(\underline{\Lambda}) w
	       		}
	       	\right)
       	}
       	{
	       	\theta_{q^{N+1}} \left(
	       		\frac{
	       			\lambda
	       		}
	       		{
	       			\left( \alpha P^{-1} \right)^{N+1} \pi(\underline{\Lambda}) w
	       		}
	       	\right)
        }
        \times
        \\
        &\times
        e_{q,\alpha^{-1}P}(w)
        \Lambda_j^{\ell_q\left( \frac{1}{w} \right)}
        \left\{
        	\Lambda_j^{-\ell_q\left( \frac{1}{w} \right)}
        	\tensor[_{N+1}]{\varphi}{_N} \left(
	            \begin{gathered}
	                \underline{0}
	                \\
	                q \Lambda_j^{-1} \widehat{\underline{\Lambda}}^j
	            \end{gathered}
	            \, \middle| \,
	            q, \frac{1}{w}
	        \right)
        \right\}
	\end{align*}
\end{coro}
Notice that in the right hand side of the above identity, the function between the curly brackets is the small $J$-function:
\[
	\Lambda_j^{-\ell_q\left( \frac{1}{w} \right)}
   	\tensor[_{N+1}]{\varphi}{_N} \left(
        \begin{gathered}
            \underline{0}
            \\
            q \Lambda_j^{-1} \widehat{\underline{\Lambda}}^j
        \end{gathered}
        \, \middle| \,
        q, \frac{1}{w}
    \right)
    =
    J^{K\textnormal{th, eq}}_{|P = \Lambda_j}\left(q,\frac{1}{w}\right).
\]
From this observation, we obtain the identity announced in Theorem \ref{stokes:thm_connection_numbers_for_qk}.

\begin{remark}
	If we try to obtain a non equivariant version of the formula in Theorem \ref{stokes:thm_connection_numbers_for_qk}, the formula does not make sense as we no longer have bases of solutions on left and right hand sides.
	Nonetheless, let us consider the ring $K_{T^{N+1}}\left( \mathbb{P}^N \right) \otimes K_{T^{N+1}}\left( \mathbb{P}^N \right)$, denoting by $P_{(0)}$ (resp. $P_{(\infty)}$) the generator on the left (resp. right) factor.
	We introduce the equivariant $K$-theoretic number
	\begin{equation*}
	\begin{aligned}
		\textbf{R}^{\textnormal{eq}}(q,w)
		&:= \frac{
				\left(\alpha^{-1}P_{(\infty)}\underline{\Lambda^{-1}} ;q\right) 
			}{
				\left(\alpha^{-1}P_{(0)}\underline{\Lambda^{-1}} ;q\right)
			}
			\frac{
				(q;q)_\infty^2
			}{
			\left(q \alpha {P_{(\infty)}}^{-1}P_{(0)} ,\alpha^{-1}P_{(\infty)}{P_{(0)}}^{-1} ;q\right)
			}
			\frac
       	{
	       	\theta_q \left
	       		((-1)^N \frac{\lambda \alpha^{-1} {P_{(\infty)}} }{{P_{(0)}}}
	       	\right)
       	}
       	{
	       	\theta_q \left(
	       		(-1)^N \lambda
	       	\right)
        }
        \times
        \\
        &\times
        \frac
       	{
	       	\theta_{q^{N+1}} \left(
	       		\frac{
	       			\lambda {P_{(0)}}^{N+1}
	       		}
	       		{
	       			\pi(\underline{\Lambda}) w
	       		}
	       	\right)
       	}
       	{
	       	\theta_{q^{N+1}} \left(
	       		\frac{
	       			\lambda
	       		}
	       		{
	       			\left( \alpha {P_{(\infty)}}^{-1} \right)^{N+1} \pi(\underline{\Lambda}) w
	       		}
	       	\right)
        }
        e_{q,\alpha^{-1}{P_{(\infty)}}}(w)
        {P_{(0)}}^{\ell_q\left( \frac{1}{w} \right)}
		\\
		& \in K_{T^{N+1}}\left( \mathbb{P}^N \right) \otimes K_{T^{N+1}}\left( \mathbb{P}^N \right).
	\end{aligned}
\end{equation*}
Then, one can notice that $\textbf{R}(q,w) _ {\left| {P_{(\infty)}} = \Lambda_k, {P_{(0)}} = \Lambda_j  \right.} = R_{k,j}$, where $R_{k,j}$ is the equivariant connection number of Theorem \ref{stokes:thm_connection_numbers_for_qk}.
The non equivariant limit of the number $\textbf{R}(q,w)$ is well defined and given by
\begin{equation}\label{eqn:qmonodromy_tentative_non_equivariant_connection_numbers}
	\begin{aligned}
	\lim_{\underline{\Lambda} \to 1}
	\textbf{R}(q,w)
	&=
	\frac{
		\left(\alpha^{-1}P_{(\infty)} ;q \right)_\infty^{N+1}
	}{
		\left(\alpha^{-1}P_{(0)} ;q \right)_\infty^{N+1}
	}
	\frac{
		\left(q ;q \right)_\infty^{2}
	}{
		\left(q \alpha {P_{(\infty)}}^{-1}{P_{(0)}},\alpha^{-1} {P_{(\infty)}}{P_{(0)}}^{-1} ;q \right)_\infty
	}
	\frac{
	    \theta_q \left
	    	((-1)^N \frac{\lambda \alpha^{-1} {P_{(\infty)}} }{{P_{(0)}}}
	    \right)
       	}{
	       	\theta_q \left(
	       		(-1)^N \lambda
	       	\right)
        }
	\times
        \\
        &\times
        \frac
       	{
	       	\theta_{q^{N+1}} \left(
	       		\frac{
	       			\lambda {P_{(0)}}^{N+1}
	       		}
	       		{
	       			w
	       		}
	       	\right)
       	}
       	{
	       	\theta_{q^{N+1}} \left(
	       		\frac{
	       			\lambda
	       		}
	       		{
	       			\left( \alpha {P_{(\infty)}}^{-1} \right)^{N+1} w
	       		}
	       	\right)
        }
        e_{q,\alpha^{-1}{P_{(\infty)}}}(w)
        {P_{(0)}}^{\ell_q\left( \frac{1}{w} \right)}
	\end{aligned}
\end{equation}
We recall that a basis of solution in the non equivariant case is obtained by taking in the formula for the $J$-function the coefficient in front of $(1-P^{-1})^j$ for $j=0,\dots,n$.
Therefore, we may expect the connection numbers in the non equivariant case to be obtained by looking at the coefficients in front of $\left( 1 - {P_{(\infty)}} \right)^k \otimes \left( 1 - {P_{(0)}}\right)^j$ in the right hand side of Equation \eqref{eqn:qmonodromy_tentative_non_equivariant_connection_numbers}, once it is decomposed in this basis of $K\left( \mathbb{P}^N \right)^{\otimes 2}$.
Unfortunately, we are currently not able to write the identity Theorem \ref{stokes:thm_connection_numbers_for_qk} without any choice of basis in equivariant $K$-theory, thus we are not able to make such a non equivariant limit.
\end{remark}

%
%

\bibliographystyle{alpha}
\small\bibliography{Bibliography}

\end{document}